\theoremstyle{plain}
\newtheorem{theorem}{Theorem}[section]
\newtheorem{lemma}[theorem]{Lemma}
\newtheorem{prop}[theorem]{Proposition}
\newtheorem{cor}[theorem]{Corollary}
\theoremstyle{definition}
\newtheorem{defin}[theorem]{Definition}
\theoremstyle{remark}
\newtheorem{rmk}[theorem]{Remark}
\newcommand{\ol}[1]{\overline{#1}}
\renewcommand{\hat}[1]{\widehat{#1}}
\renewcommand{\tilde}[1]{\widetilde{#1}}
\newcommand{\set}[1]{{\left\{#1\right\}}}
\newcommand{\pa}[1]{{\left(#1\right)}}
\newcommand{\abs}[1]{{\left|#1\right|}}
\newcommand{\ra}{\rightarrow}
\newcommand{\sheaf}[2]{\mathcal{#1}_{#2}}
\newcommand{\fascio}[1]{\sheaf{O}{#1}}
\newcommand{\isom}{\xrightarrow{\sim}}
\newcommand{\prym}{\mathcal{P}}
\DeclareMathOperator{\rk}{rk}
\DeclareMathOperator{\Hom}{Hom}
\DeclareMathOperator{\sym}{Sym}
\DeclareMathOperator{\ns}{NS}
\DeclareMathOperator{\codim}{codim}
\DeclareMathOperator{\supp}{supp}
\DeclareMathOperator{\spec}{Spec}
\DeclareMathOperator{\pic}{Pic}
\DeclareMathOperator{\ext}{Ext}
\DeclareMathOperator{\cliff}{Cliff}
\title[Generic Torelli theorem for Prym varieties]{Generic Torelli theorem for Prym varieties of ramified coverings}
\author{Valeria Ornella Marcucci}
\email{valeria.marcucci@unipv.it}
\address{Dipartimento di Matematica ``F. Casorati''\\
 	Universit\`a di Pavia\\
	via Ferrata 1, 27100 Pavia, Italy}
\author{Gian Pietro Pirola}
\email{gianpietro.pirola@unipv.it}
\address{Dipartimento di Matematica ``F. Casorati''\\
 	Universit\`a di Pavia\\
	via Ferrata 1, 27100 Pavia, Italy}
\thanks{This work has been partially supported by 1) FAR 2010 (PV) \emph{"Variet\`a algebriche, calcolo algebrico, grafi orientati e topologici"}; 2) INdAM (GNSAGA) 3) PRIN 2009 \emph{``Moduli, strutture geometriche e loro applicazioni''}}
\keywords{Prym varieties, Prym map, Torelli theorem, ramified double coverings}
\date{\today}
\begin{document}

\begin{abstract}
We consider the Prym map from the space of double coverings of a curve of genus $g$ with $r$ branch points to the moduli space of abelian varieties. We prove that $\prym\colon \mathcal{R}_{g,r} \ra \mathcal{A}_{g-1+\frac{r}{2}}^\delta$ is generically injective if
\[
r>6\text{ and }g\geq 2,\quad r=6\text{ and }g\geq 3,\quad r=4\text{ and }g\geq 5,\quad r=2\text{ and }g\geq 6.
\]
We also show that a very general Prym variety of dimension at least $4$ is not isogenous to a Jacobian.
\end{abstract}

\maketitle

\section{Introduction}
\label{intr}

Let $C$ be a complex projective curve of genus $g\geq 1$ and $\pi\colon D\ra C$ be a $2$-sheeted covering of $C$ ramified at $r$ points. The \emph{Prym variety} $P$ of $\pi$ is the identity component of the kernel of the norm homomorphism
\[
N\colon J\pa{D} \ra J\pa{C}.
\]
Note that if $r\neq 0$, then $\ker N$ is connected (see for example \cite[Section 3, Lemma]{prymMumford} and \cite[Section 1, Lemma 1.1]{HurwitzSpacesKanev}). The Prym variety $P$ of $\pi$ is an abelian variety of dimension $g-1+\frac{r}{2}$ and the divisor $\Theta_P:= \Theta_{J\pa{D}}\cap P$ gives a polarization of type
\begin{equation}
\label{eq:polarizzazione}
\delta=(1, \ldots, 1,\underset{g}{\underbrace{2,\ldots, 2}})
\end{equation}
(see, for instance, \cite[Section 1, Lemma 1.1]{HurwitzSpacesKanev}).

In this paper we deal with the generic Torelli theorem for Prym varieties of ramified coverings. The infinitesimal Torelli theorem stated in \cite{pol1122} (see also Proposition \ref{prop:proplocaltorelli}) and the \'etale case suggest that the result should hold when the dimension of the space of coverings is strictly smaller than the dimension of the moduli space of abelian varieties.

Let $C$ be a curve, $\eta$ a line bundle on $C$ and $R$ a multiplicity free divisor in the linear system $\abs{\eta^2}$. Following \cite{prymMumford} we can provide the coherent $\fascio{C}$-module $\fascio{C}\oplus \eta^{-1}$ with a natural $\fascio{C}$-algebra structure depending on $R$. The natural projection
\[
\pi\colon D:=\spec\pa{\fascio{C}\oplus \eta^{-1}}\ra C=\spec\pa{\fascio{C}}
\]
is a ramified double covering with branch points in the support of $R$ and, vice versa, each double covering $\pi$ comes in this form.
Let $\mathcal{R}_{g,r}$ denote the scheme parametrizing triples $\pa{C, \eta, R}$ up to isomorphism; the \emph{Prym map} is the morphism
\[
\prym\colon \mathcal{R}_{g,r} \ra \mathcal{A}_{g-1+\frac{r}{2}}^\delta
\]
which associates to $\pa{C, \eta, R}$ the Prym variety $P$ of $\pi$. We call the \emph{Prym locus} the closure of the set of Prym varieties in $\mathcal{A}_{g-1+\frac{r}{2}}^\delta$ and we denote it by $\mathcal{P}_{g-1+\frac{r}{2}}^\delta$.

In the \'etale case the Prym map is generically finite for $g\geq 6$ (Wirtinger theorem, see \cite{Beauv77}) and it is never injective (\cite{DS81, donagiTetragonal, narbiellipt, narPositive}). Kanev (\cite{KanevTorelli}) and Friedman and Smith (\cite{FriedmanRoyPrym}) proved independently that the Prym map has generically degree one for $g\geq 7$. This is the so-called \emph{Generic Torelli theorem for Prym varieties}. It is known that (see \cite{pol1122, LangeSernesi, GreenLaz} and Section \ref{subsec:finitness}) the Prym map is generically finite (onto its image) if and only if
\[
\dim \mathcal{R}_{g,r} \leq \dim \mathcal{A}_{g-1+\frac{r}{2}}^\delta.
\]
In \cite{pol1122}, Nagaraj and Ramanan proved that, if
\[
r=4, \qquad g\geq 4, \qquad h^0(\eta^2)=1,
\]
the triple $\pa{C, \eta, R}$ can be recovered from the Prym variety $\prym\pa{C, \eta, R}$. Furthermore the Prym map
\[
\prym\colon \mathcal{R}_{3,4} \ra \mathcal{A}_{4}^\delta
\]
is a dominant morphism of degree $3$ (see \cite[Theorem 9.14]{pol1122} and \cite[Theorem 5.11]{bardcilverra}).
We prove that:

\begin{theorem}[Generic Torelli]
\label{theorem: globaltorelli}
If one of the following conditions holds
\[
r>6\text{ and }g\geq 2,\qquad r=6\text{ and }g\geq 3,\qquad r=4\text{ and }g\geq 4,\qquad r=2\text{ and }g\geq 6,
\]
the Prym map is generically injective.
\end{theorem}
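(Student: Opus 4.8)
We sketch the strategy we would follow. The starting point is the infinitesimal Torelli theorem (Proposition~\ref{prop:proplocaltorelli}) together with the finiteness results of Section~\ref{subsec:finitness}: under the numerical hypotheses of the statement the Prym map $\prym$ is generically finite onto its image and unramified at a general point of $\mathcal{R}_{g,r}$, so it suffices to prove that $\deg\prym=1$, i.e. that two \emph{general} triples $\pa{C_1,\eta_1,R_1}$, $\pa{C_2,\eta_2,R_2}$ with isomorphic polarized Prym varieties are isomorphic. The plan is to establish this by induction on the pair $(g,r)$ via a degeneration argument of Friedman--Smith type \cite{FriedmanRoyPrym}, the base of the induction being the classical generic Torelli theorem for étale coverings (\cite{FriedmanRoyPrym,KanevTorelli}, valid once the base curve has genus $\geq 7$) and the case $r=4$ of Nagaraj--Ramanan \cite{pol1122}.

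The preliminary step is to extend $\prym$ to a proper morphism from a suitable partial compactification $\ol{\mathcal{R}}_{g,r}$ --- modelled on admissible coverings of nodal curves --- to a toroidal compactification of $\mathcal{A}^\delta$, so that a degenerate covering maps to its generalized (semiabelian) Prym variety. The two relevant degenerations are: \emph{colliding two branch points} at a point $p\in C$, after which $D$ acquires a node over $p$, its normalisation is the covering attached to $\pa{C,\eta(-p),R-2p}$ with $r-2$ branch points, and $\pa{P,\Theta_P}$ degenerates to a rank--one extension $1\ra\mathbb{G}_m\ra\tilde P\ra P'\ra 1$ with $P'=\prym\pa{C,\eta(-p),R-2p}$, the extension class in $\hat{P'}$ recording the Abel--Prym image of $p$; and \emph{pinching the base curve} at $q$, after which the normalisation $\tilde C$ has genus $g-1$, the covering becomes ramified at the two points over $q$, and the abelian part of the limiting semiabelian Prym is a Prym variety of type $(g-1,r+2)$. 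What must be proved here is a limit period computation --- that the abelian part and the extension class of these limits are exactly as stated --- equivalently a description of the limit mixed Hodge structure, or of the vanishing cycles of the degenerating family of curves $D$; this is where most of the work is concentrated.

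Granting this, the inductive step is the usual one: given two general triples with isomorphic Prym, one chooses compatible one--parameter degenerations of both inside $\ol{\mathcal{R}}_{g,r}$ landing in the same boundary stratum; properness of the extended Prym map together with unramifiedness at the general point forces the general fibre of $\prym$ to specialise injectively to the boundary fibre, so the two limiting semiabelian varieties coincide; applying the inductive hypothesis to the abelian part, together with the constraint imposed by the extension class, one recovers the degenerate covering, and then (letting the branch points collide in turn, or the pinching point vary) reconstructs $\pa{C,\eta,R}$. The remaining point --- and, alongside the period computation, the principal obstacle --- is the combinatorial verification that the two degenerations above reduce every pair $(g,r)$ in the stated range to a base case: repeated collisions take $r=2,\,g\geq 7$ to the étale situation, whereas for the minimal remaining cases $r=2,\,g=6$ and $r=6,\,g=3$ one must carry out a finer analysis of the degenerate fibre, exploiting that the extension class (and not merely the abelian part) is carried by the limit, so that injectivity persists even where the abelian part alone would leave a residual ambiguity; one also checks that the genuinely non-injective pairs such as $r=6,\,g=2$ are never produced along the way.
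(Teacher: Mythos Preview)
Your proposal takes a genuinely different route from the paper, and it has a real gap.

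The paper does \emph{not} argue by induction on $(g,r)$ with the \'etale theorem as a base case. Its core technique is an IVHS argument at a general point: the kernel of the co-differential $d\prym^*$ is the space $I_2(C_\eta)$ of quadrics through the semi-canonical model $C_\eta\subset\mathbb{P}H^0(C,\omega_C\otimes\eta)^*$, and Theorem~\ref{theorem:quadriche} shows that one can recover $C_\eta$ (hence the pair $(C,\eta)$) as the unique non-degenerate curve in the base locus of these quadrics. This already proves the theorem when $h^0(\eta^2)=1$ (i.e.\ $g\geq r$), and for $g<r$ it pins down the curve $C$ for every point in a general fibre. Only then does a degeneration enter, and only of one kind: branch-point collision, performed with $C$ held fixed, to show that the restriction $\prym|_{\mathcal{U}_C}$ is generically injective (Proposition~\ref{prop: torelliQ} and Lemma~\ref{lemma:astratto}). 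The injectivity on the boundary stratum $Y_2$ is proved directly from the Abel--Prym curve (Lemmas~\ref{lemma:nuovoPrym}--\ref{lemma:genin2}), not by appeal to a lower-$r$ Torelli theorem. In particular the \'etale generic Torelli theorem is never invoked.

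Your inductive scheme, by contrast, does not close. Take $r>6$ and $g=2$: colliding two branch points lands you at $(g,r-2)=(2,r-2)$, and iterating eventually reaches $(2,6)$, which is precisely one of the ``genuinely non-injective pairs'' you claim to avoid; pinching the base curve lands you at $(g-1,r+2)=(1,r+2)$, the bi-elliptic case, which is outside the statement (and whose proof in \cite{MarNar} in fact relies on the present paper). So neither degeneration reduces $(2,r)$ with $r\geq 8$ to a known case, and your assertion that $(2,6)$ ``is never produced along the way'' is false. The same obstruction propagates to $(3,6)$ via pinching. This is not a detail that a ``finer analysis of the extension class'' resolves: the abelian part of the limit carries no Torelli information in these cases, and the extension class alone (a point of an Abel--Prym curve in a Prym variety that is itself not yet known to determine its covering) cannot bootstrap the argument. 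The paper's quadric-intersection method is what actually handles these low-genus cases, and it is absent from your sketch.
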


This shows that our expectation is essentially satisfied. In the bi-elliptic case ($r\geq 6$ and $g=1$) the generic injectivity of the Prym map has been recently proved by the first author and J.C. Naranjo (see \cite{MarNar}) by using the techniques we develop in this paper and a construction of Del Centina and Recillas (see \cite{DelCentinaRecillas}). Thus there are only two cases left ($r=2$ and $g=5$, $r=6$ and $g=2$) that will be object of future studies.

The proof consists of two steps. In the spirit of \cite{GriffithsArticolo} and \cite{GriffithsLibro}, the first part is based on the \emph{infinitesimal variation of Hodge structures} approach to Torelli problems (the Prym \'etale case is described in \cite{SmithVarley}). We prove that, for a general point, the inclusion
\begin{equation}
\label{eq:incldiff}
d\prym\pa{T_\pa{C, \eta, R}} \subset T_{\prym\pa{C, \eta, R}},
\end{equation}
i.e. the position of the image of the differential as a subspace of the tangent space of $\mathcal{A}_{g-1+\frac{r}{2}}^\delta$ at $\prym\pa{C, \eta, R}$, determines
the semi-canonical model $C_\eta$ of $C$, that is the image of the curve $C$ through the embedding associated to $\omega_C\otimes\eta$. Since $C_\eta$ identifies, up to isomorphism, the pair $\pa{C, \eta}$, this proves the theorem when $h^0(\eta^2)=1$, i.e. when $g\geq r$. As in the \'etale case (cf. \cite{DebarreTorelliPrym}), the kernel of the co-differential of the Prym map at $\pa{C, \eta, R}$ is the space of the quadrics vanishing on $C_\eta$. We show that we can recover $C_\eta$ in the intersection of the quadrics. It turns out that the more interesting and difficult case is when $g=6$ and $r=2$. In this case the intersection of the quadrics is, scheme theoretically, the curve $C_\eta$ and five projective lines corresponding to the five morphisms of degree $4$ from $C$ to $\mathbb{P}^1$.

In the second part, using degeneration methods (for the \'etale case see \cite{Beauv77S, friedsmithquadrics}), we prove the theorem for $g<r$. We fix a general curve $C$ and we show that the Prym map is generically injective on the space of double coverings of $C$. To do this we extend the Prym map to some admissible coverings of $C$ (see, for example, Figure \ref{disegno} of page \pageref{disegno}) and we get a proper morphism to a suitable compactification of $\mathcal{A}_{g-1+\frac{r}{2}}^\delta$. Then we prove that the infinitesimal Torelli theorem holds also at the boundary and we give a lemma that allows us to compute the degree of the Prym map once we know the behaviour at the boundary.

\medskip

We give then two applications. By Theorem \ref{theorem: globaltorelli}, a general Prym variety $P$ arises from a unique covering $\pi\colon D \ra C$. Thus there is a canonical choice of a divisor $\Theta_P$ inducing the polarization. In analogy with the hyperelliptic case of Andreotti's proof of the Torelli theorem (see \cite{andreotti}), we prove that the Gauss map of $\Theta_P$ in $P$ allows us to determine the branch locus of $\pi$.

The second application is the following.
\begin{theorem}
\label{theorem:isogjacob}
A very general Prym variety of dimension at least $4$ is not isogenous to a Jacobian variety.
\end{theorem}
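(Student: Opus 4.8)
The plan is to combine the generic Torelli statement of Theorem \ref{theorem: globaltorelli} with a dimension count that compares the locus of Prym varieties isogenous to a Jacobian against the full Prym locus $\prym_{g-1+\frac{r}{2}}^\delta$. First I would fix a dimension $d=g-1+\frac{r}{2}\geq 4$ and, for a fixed isogeny type, note that being isogenous to a Jacobian is a countable union of conditions: an abelian variety $A$ of dimension $d$ is isogenous to some $JC$ if and only if there is an isogeny $\varphi\colon JC\ra A$ for some smooth curve $C$ of genus $d$. Since isogenies of bounded degree form bounded families and every isogeny factors as a bounded-degree one composed with multiplication by an integer, the sublocus of $\mathcal{A}_d$ (or of the Prym locus) consisting of abelian varieties isogenous to a Jacobian is a \emph{countable} union of closed algebraic subsets. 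Therefore, to prove the ``very general'' statement it suffices to exhibit, for \emph{each} such closed piece, a Prym variety not lying in it; equivalently it suffices to show that the Prym locus is not contained in any single irreducible component of the ``isogenous to a Jacobian'' locus, and for this a strict inequality of dimensions between the relevant incidence variety and $\mathcal{R}_{g,r}$ is enough.

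Next I would set up the incidence correspondence
\[
\mathcal{I}=\set{\pa{\pa{C,\eta,R},\, C',\, \varphi} : \varphi\colon JC'\ra \prym\pa{C,\eta,R}\text{ an isogeny of degree}\leq N},
\]
for each fixed bound $N$, and estimate its dimension by fibering it over $\mathcal{M}_d\times(\text{bounded data for }\varphi)$. The fibre over a fixed Jacobian $JC'$ and a fixed finite subgroup of $JC'$ is a torsor under automorphisms and is finite-dimensional of dimension bounded independently of $N$; so $\dim \mathcal{I} \leq \dim \mathcal{M}_d + c(N)$ with $c(N)$ accounting for the (finite, hence $0$-dimensional once the degree is fixed — actually just finitely many) choices. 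Here $\dim\mathcal{M}_d = 3d-3$. On the other hand $\dim \mathcal{R}_{g,r} = 3g-3+r$, and one computes
\[
\dim \mathcal{R}_{g,r} - (3d-3) = (3g-3+r) - \pa{3\pa{g-1+\tfrac{r}{2}}-3} = 3 - \tfrac{r}{2},
\]
which is positive exactly when $r=2$ (giving $2$) or $r=4$ (giving $1$) and is $\leq 0$ for $r\geq 6$. So this crude count is \emph{not} by itself sufficient for large $r$; the correct comparison must instead use Theorem \ref{theorem: globaltorelli}, namely that $\prym$ is generically injective, so that $\dim\prym_{d}^\delta = \dim\mathcal{R}_{g,r} = 3g-3+r$, and compare this against the dimension of the (countably many) components of the Jacobian-isogeny locus \emph{intersected with} $\mathcal{A}_d^\delta$. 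The key point is that the Jacobian locus $\mathcal{J}_d\subset\mathcal{A}_d$ has dimension $3d-3$, its orbit under the countable group of isogeny classes still has each component of dimension $3d-3$, while $\dim\prym_d^\delta$ typically exceeds $3d-3$ once $\mathcal{R}_{g,r}$ is big enough; when it does not, one argues directly that a generic Prym variety has Néron–Severi group of rank one (using that a general curve $C$ has $\mathrm{End}(JC)=\mathbb{Z}$ and a degeneration/specialization argument that a general Prym inherits $\mathrm{End}(P)=\mathbb{Z}$), whereas the polarization type $\delta$ in \eqref{eq:polarizzazione} together with a principal polarization pulled back from a Jacobian under an isogeny would force extra endomorphisms or an incompatibility of polarization types, a contradiction.

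The main obstacle I expect is precisely the borderline range where the naive dimension count fails (essentially all $d$ with $r\geq 6$): there one cannot simply say ``the Prym locus is too big to sit inside the Jacobian-isogeny locus'', and must instead produce a genuinely arithmetic obstruction. The cleanest route is to show $\mathrm{End}\pa{\prym\pa{C,\eta,R}}=\mathbb{Z}$ for very general $\pa{C,\eta,R}$ — for instance by degenerating to a boundary point of $\mathcal{R}_{g,r}$ (using the admissible coverings of the second part of the paper's proof) where $P$ degenerates to a product whose factors are general Jacobians with no extra endomorphisms and no common isogeny factors, and then invoking that the generic endomorphism algebra in a family only shrinks under specialization. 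Granting $\mathrm{End}(P)=\mathbb{Z}$, an isogeny $\varphi\colon JC'\ra P$ would transport the principal polarization of $JC'$ to a polarization on $P$; but the space of polarizations on $P$ up to $\mathbb{Q}$-scaling is then $1$-dimensional and determined by $\delta$, and one checks that no integral multiple of such a polarization is of the form $\varphi^*\Theta_{JC'}$ with $\varphi$ an isogeny unless $P$ itself is already (isomorphic to, hence by generic Torelli equal to a Prym that is a) Jacobian — which a general Prym of dimension $\geq 4$ is not, since the Prym locus is not contained in the Jacobian locus by a dimension comparison in the one remaining regime where both have the same expected dimension, handled by a tangent-space computation via the infinitesimal Torelli Proposition \ref{prop:proplocaltorelli}.
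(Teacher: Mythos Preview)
Your dimension count for small $r$ is essentially the content of Proposition~\ref{prop:Prymlocus} in the paper, so that part is on the right track (though you should be careful: for $r=2$, $g=4$ and $r=4$, $g=2$ the Prym map is not generically finite, and one needs instead that it is dominant, cf.\ Lemma~\ref{lemma:grado7quadriche}).

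The genuine gap is in your treatment of the range $r\geq 6$, which is exactly the hard case. Your proposed obstruction --- that $\End(P)=\mathbb{Z}$ together with the polarization type $\delta$ rules out an isogeny to a Jacobian --- does not work. Polarization types are \emph{not} isogeny invariants. Concretely, if $\varphi\colon J(C')\to P$ is an isogeny and $\psi\colon P\to J(C')$ satisfies $\psi\circ\varphi=[n]$, then $\psi^*\Theta_{J(C')}$ is a polarization on $P$ and, since $\ns(P)\simeq\mathbb{Z}$, it is simply some positive integer multiple of the primitive class $L_\delta$. There is no contradiction here: nothing forces that multiple to be of any particular form, and certainly nothing forces $P$ to carry a principal polarization. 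So the sentence ``one checks that no integral multiple of such a polarization is of the form $\varphi^*\Theta_{JC'}$ unless $P$ is a Jacobian'' is an assertion without content.

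The paper's argument for $r\geq 6$ is of a completely different nature and does not proceed via polarizations at all. One assumes a family of isogenies $\mathcal{J}\to\mathcal{P}$ over (an \'etale cover of) an open set of $\mathcal{R}_{g,r}$ and \emph{degenerates to the boundary} $Y_2$ of Section~\ref{subsec:Prymmapboundary}: the Prym side becomes a rank-$1$ semi-abelian variety with compact part $P=\prym(C,\eta',R')$ and extension class $\pm[p_x-q_x]$ on the Abel--Prym curve, while the Jacobian side must become a rank-$1$ extension of some fixed $J(H)$ with extension class on the difference surface $\Gamma_H$. Compatibility of extension classes under the limit isogeny $\varphi\colon J(H)\to P$ then yields, as $x$ varies in $C$, a non-constant rational map from (the Kummer image of) the Abel--Prym curve into $n\Gamma'_H$, which by Lemma~\ref{lemma:diffpern} is birational to $H_2\hookrightarrow J(H)$. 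Since this curve has geometric genus at most $g<g-2+\tfrac{r}{2}=\dim J(H)$, the abelian variety $J(H)$ is not simple. But $\ns(P)\simeq\mathbb{Z}$ (this is the input from \cite{HodgePrym}) forces $P$, and hence the isogenous $J(H)$, to be simple --- a contradiction. The fact from Proposition~\ref{prop:Prymlocus}\eqref{item:PrymLocus2} that $H$ is not hyperelliptic is needed so that $n\Gamma'_H$ really is birational to $H_2$. This comparison of extension data at the boundary is the missing idea in your proposal.
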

We recall that a Prym variety is said to be very general when it is outside a countable union of proper subvarieties of the Prym locus. The argument is based again on degenerations techniques and it is similar to those used in \cite{artPirola} and \cite{NarPir}. However, the comparison of extension classes requires a more sophisticated geometric analysis. In particular, we use that a very general Prym variety of a ramified covering is simple (see \cite{HodgePrym} and \cite{baseN}).

In \cite{mio} the first author proves that on a very general Jacobian variety of dimension $n\geq 4$ there are no curves of genus $g$, with $n<g<2n-2$, and, by using Theorem \ref{theorem:isogjacob}, that there are strong obstructions to the existence of curves of genus $2n-2$. For example, there are no curves of genus $6$ on a very general Jacobian variety of dimension $4$. This was our original motivation.

\subsection{Plan of the paper}

In Section \ref{sec:firsttorelli} and \ref{sec:genericTorelli} we prove Theorem \ref{theorem: globaltorelli}. In Section \ref{subsec:finitness} we prove an infinitesimal version of the Torelli theorem and we determine when the Prym map is finite. In Section \ref{subsec:quadrics} we describe the intersection of the quadrics vanishing on the semi-canonical model $C_\eta$ of $C$. In particular, we show that $C_\eta$ is the only non-hyperplane curve in the intersection. This concludes the proof of the theorem when $g\geq r$.

In Section \ref{subsec:Prymmapboundary} we fix a curve $C$ and we show that it is possible to define a rational map
\[
\mathcal{S}\colon \Upsilon \dashrightarrow  \bar{\mathcal{A}}_{g-1+\frac{r}{2}}^\delta,
\]
from the space of admissible coverings of $C$ to a suitable compactification of the moduli space of abelian varieties, which coincides with the Prym map on the space of smooth coverings. In Section \ref{subsec:blowup} we blow up $\mathcal{S}$ in its indeterminacy locus to get a proper map. Then we give a result on the cardinality of the general fibre of a proper morphism and we apply it to our case in order to show that $\mathcal{S}$ is generically injective (if $g>2$).

In Section \ref{subsec:Gauss} we describe the Gauss map of a Prym variety, while in Section \ref{subsec:prymjac} we prove Theorem \ref{theorem:isogjacob}.

\subsection{Notation and preliminaries}
\label{subsec: preliminaries}

\begin{enumerate}
 \item We work over the field $\mathbb{C}$ of complex numbers. 
Each time we have a family of objects parametrized by a scheme $X$ (respectively by a subset $Y\subset X$) we say that the \emph{general} element of the family has a certain property $\mathfrak{p}$ if $\mathfrak{p}$ holds on a dense Zariski open subset of $X$ (respectively of $Y$). Moreover, we say that a \emph{very general} element of $X$ (respectively of $Y$) has the property $\mathfrak{p}$ if $\mathfrak{p}$ holds on a dense subset that is the complement of a union of countably many proper subvarieties of $X$ (respectively of $Y$).

\item \label{item:Laz} Given an effective line bundle $L$ on $C$, we will often need to consider the natural map
\[
m\colon\sym^2H^0\pa{C,L} \ra H^0\pa{C,L^2}
\]
and its dual
\[
m^*\colon H^1\pa{C,\omega_C\otimes L^{-2}} \ra \sym^2H^1\pa{C,\omega_C\otimes L^{-1}}.
\]
Let $f\colon C \ra \mathbb{P}^N$ be the projective morphism associated to the linear system $\abs{L}$. We recall (cf. \cite{GreenLaz, LazLect}) that the kernel of $m$ can be identified with the space of the homogeneous quadratic polynomials vanishing on $f\pa{C}$. Furthermore, $m^*$ can be identified (up to multiplication by a non-zero scalar) with the map
\[
\ext^1\pa{L, \omega_C\otimes L^{-1}}\ra \Hom\pa{H^0\pa{C,L}, H^1\pa{C, \omega_C\otimes L^{-1}}}
\]
which sends an extension of $L$ by $\omega_C\otimes L^{-1}$ to the connecting homomorphism it determines.

\item \label{item:AbelPrymPrelim} Let $P$ be the Prym variety of a ramified covering $\pi\colon D \ra C$ and let $i\colon D \ra D$ be the involution induced on the curve $D$ by $\pi$. The \emph{Abel-Prym map} of $\pi$ is the morphism
\begin{align*}
a\colon D &\ra J\pa{D}\\
x &\mapsto [x-i\pa{x}].
\end{align*}
The image of $a$ is a curve $D'$ contained in $P$ and it is called \emph{Abel-Prym curve}. If $D$ is hyperelliptic, $a$ has degree two, otherwise it is birational.

\item Given an abelian variety $A$, its \emph{Kummer variety} $\mathcal{K}\pa{A}$ is the quotient of $A$ by the involution that maps $x$ in $-x$. We denote by  $\mathcal{K}^0\pa{A}$ the Kummer variety of $\pic^0\pa{A}$.

\item A \emph{semi-abelian variety} $G$ of rank $n$ is  an extension $0 \ra T \ra G \ra B \ra 0$ of an abelian variety $B$ by an algebraic torus $T=\prod^n \mathbb{G}_m$ (see \cite[Chapter II, Section 2]{ChaiLibro} and \cite{FaltingsLibro}).

\item Let $D$ be a projective curve having only nodes (ordinary double points) as singularities. The \emph{generalized Jacobian variety} of $D$ is defined as the semi-abelian variety $\pic^0\pa{D}$. If $D$ is obtained from a non-singular curve $C$ by identifying $p,q\in C$, the semi-abelian variety $J\pa{D}$ is the extension of $J\pa{C}$ by $ \mathbb{G}_m$ determined by $\pm [p-q]\in \mathcal{K}^0\pa{J\pa{C}}$ (see \cite{SerreLibroJacobDeg}).

\item \label{item:diff} Given a smooth curve $C$ of genus greater than 1, we denote by $\Gamma_C$ the image of the difference map
\begin{align}
C\times C &\ra J\pa{C} \isom \pic^0\pa{J\pa{C}}\\
\notag \pa{p,q} &\mapsto [a-b].
\end{align}
The image $\Gamma'_C$ of $\Gamma_C$ through the projection $\sigma_{C}\colon \pic^0\pa{J\pa{C}} \ra \mathcal{K}^0\pa{J\pa{C}}$ is a surface, in the Kummer variety, that parametrizes generalized Jacobian varieties of rank $1$ with compact part $J\pa{C}$. Given an integer $n\in \mathbb{N}$, we denote by $n\Gamma_C$ the image of $\Gamma_C$ under the multiplication by $n$ and we denote by  $n\Gamma'_C$ the projection of $n\Gamma_C$ in $\mathcal{K}^0\pa{J\pa{C}}$.
\end{enumerate}

\section{A first Torelli-type theorem}
\label{sec:firsttorelli}

\subsection{Finiteness of the Prym map}
\label{subsec:finitness}

In the present section we want to determine when the Prym map is finite. Here we state a simple lemma that we will use throughout the paper.

\begin{lemma}
\label{lemma:tecnico}
Let $\pa{C, \eta, R}$ be a general point of $\mathcal{R}_{g,r}$.
\begin{enumerate}
\item \label{item:tecnico1}If $r=6$ and $g=3$, then $h^0\pa{\eta}=1$.
\item \label{item:tecnico2}
    If one of the following conditions holds
    \begin{enumerate}
        \item  $r=2$  and $g\geq 5$,
        \item  $r=4$  and $g\geq 3$,
        \item $r=6$  and $g\geq 4$,
    \end{enumerate}
    then $\eta$ is not effective.
\item If one of the following conditions holds
       \begin{enumerate}
        \item \label{item:tecnico3a} $r=2$, and $g\geq 4$,
        \item $r=4$, and $g\geq 5$,
    \end{enumerate}
then, given any point $p\in C$, the line bundle $\eta\otimes\fascio{C}\pa{p}$ is not effective.
\item \label{item:tecnico4} If $r=2$ and $g\geq 6$, then, given any two points  $p,q\in C$, the line bundle $\eta\otimes\fascio{C}\pa{p+q}$ is not effective.
\end{enumerate}
\begin{proof}
Statements \eqref{item:tecnico1} and  \eqref{item:tecnico2} are trivial. We prove \eqref{item:tecnico3a}, the other cases being analogous. Assume by contradiction that for each $\eta$ there exists a point $p\in C$ such that $\eta\otimes\fascio{C}\pa{p}\simeq \fascio{C}\pa{a_1+a_2}$ for some $a_1, a_2 \in C$. Then $R+2p\equiv 2a_1+2a_2$ and so $C$ has a $2$-dimensional family of $g^1_4$ of type $\abs{2a_1+2a_2}$. Since the moduli space of coverings of degree $4$ of $\mathbb{P}^1$ with $2$ ramification points over the same branch point has dimension $2g+2$, a general curve of genus $g\geq 4$ has, at most, a $1$-dimensional family of $g^1_4$ of this type and we get a contradiction.
\end{proof}
\end{lemma}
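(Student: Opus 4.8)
\medskip
\noindent\textbf{Proof plan.}\ The plan is to treat parts \eqref{item:tecnico1} and \eqref{item:tecnico2} as elementary Brill--Noether genericity statements, and parts \eqref{item:tecnico3a}, \eqref{item:tecnico4} and the remaining case of part~(3) by a uniform argument by contradiction: the hypothetical extra section, once squared, yields a linear equivalence on $C$ between ``even'' divisors, and letting the branch divisor $R$ vary then forces on the fixed general curve a family of special pencils larger than a dimension count permits.

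I begin by fixing the description of a general point of $\mathcal{R}_{g,r}$ as a general curve $C$, a general reduced effective divisor $R$ of degree $r$, and a square root $\eta$ of $\fascio{C}\pa{R}$. Since multiplication by $2$ is surjective on $\pic\pa{C}$, and $\fascio{C}\pa{R}$ is then general in the variety $W^0_r\pa{C}$ of effective divisor classes of degree $r$, the class $\eta$ is a general element of the fibre $\sq{2}^{-1}\pa{W^0_r\pa{C}}\subseteq\pic^{r/2}\pa{C}$, which has the same dimension as $W^0_r\pa{C}$. For part \eqref{item:tecnico2} one has $r/2<g$ in each listed case, so $W^0_{r/2}\pa{C}$ has dimension $r/2$, strictly less than $\dim W^0_r\pa{C}$; being contained in $\sq{2}^{-1}\pa{W^0_r\pa{C}}$, it is avoided by the general $\eta$, which is therefore non-effective. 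For part \eqref{item:tecnico1}, here $r=6$ and $g=3$, so $\deg\eta=g$ and $\eta$ is automatically effective; Riemann--Roch gives $h^0\pa{\eta}=1+h^0\pa{\omega_C\otimes\eta^{-1}}$ with $\deg\pa{\omega_C\otimes\eta^{-1}}=1$, and $\omega_C\otimes\eta^{-1}$ is effective only along the curve $\set{\omega_C\pa{-x}:x\in C}\subset\pic^3\pa{C}$, which a general $\eta$ avoids; hence $h^0\pa{\eta}=1$.

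For part \eqref{item:tecnico3a} ($r=2$, $g\geq 4$) I argue by contradiction. If the statement failed, then for a general $\pa{C,\eta,R}$ there would be a point $p$ with $\eta\otimes\fascio{C}\pa{p}\simeq\fascio{C}\pa{a_1+a_2}$; squaring and using $\eta^2\simeq\fascio{C}\pa{R}$ gives $R+2p\equiv 2a_1+2a_2$. Fixing $C$ general and letting $R$ range over the two-dimensional family $C^{\pa{2}}$ of reduced effective degree-$2$ divisors, each choice (for a suitable $\eta$) produces a complete linear system $\abs{2a_1+2a_2}$ containing the two distinct members $2a_1+2a_2$ and $R+2p$, hence of dimension at least $1$ --- that is, a $g^1_4$ on $C$ with a fibre of ramification type $\pa{2,2}$. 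The assignment $R\mapsto\abs{2a_1+2a_2}$ is finite onto its image, because from such an $L$ one recovers $\fascio{C}\pa{R}$ as $L\pa{-2p}$ with $p$ lying in the finite ramification locus of the pencil, whence $R$ (the residual divisor) is determined up to finitely many choices; so the hypothesis would give $C$ a two-dimensional family of $g^1_4$'s of this type. But such pencils correspond to degree-$4$ coverings of $\mathbb{P}^1$ possessing a distinguished branch point over which there are two simple ramification points, and counting branch points by Riemann--Hurwitz and dividing by $\mathrm{PGL}_2$ shows these form a family of dimension $2g+2$; therefore a general curve of genus $g$ carries at most a $\pa{2g+2}-\pa{3g-3}=5-g$-dimensional family of them (and none at all if that family does not dominate $\mathcal{M}_g$), which for $g\geq4$ is at most one-dimensional --- a contradiction. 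The case $r=4$ of part~(3) and part \eqref{item:tecnico4} ($r=2$, $g\geq6$) follow the same scheme: there $\eta\otimes\fascio{C}\pa{p}$, respectively $\eta\otimes\fascio{C}\pa{p+q}$, is isomorphic to an effective bundle $\fascio{C}\pa{a_1+a_2+a_3}$ of degree $3$, so that squaring gives $R+2p\equiv 2\pa{a_1+a_2+a_3}$, respectively $R+2p+2q\equiv 2\pa{a_1+a_2+a_3}$, and the special pencils obtained are $g^1_6$'s with a totally ramified (``all even'') member $2E$, $E\in C^{\pa{3}}$; one then compares the dimension of the resulting family with the dimension of the locus of such $g^1_6$'s on a general curve.

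The main obstacle is precisely this last dimension count. For the $g^1_6$ cases the crude Hurwitz-space estimate need not be sharp, and I would instead bound the relevant Brill--Noether locus directly on the curve: Riemann--Roch turns the condition $h^0\pa{\fascio{C}\pa{2E}}\geq 2$ into effectivity of $\omega_C\otimes\fascio{C}\pa{-2E}$, reducing the matter to estimating the dimension of an intersection of the shape $W^0_4\pa{C}\cap\pa{K_C-2W^0_3\pa{C}}$ inside $\pic^4\pa{C}$ (when $g=6$), or just of $2W^0_3\pa{C}$ (when $g=5$), which ought to have the expected dimension on a general curve. The most delicate instance is $r=2$, $g=6$ --- the case already singled out in the introduction --- where the special $g^1_6$'s sweep out only a one-dimensional family on a general curve while the construction yields a two-dimensional one; there the finite-fibre verification for the map $R\mapsto\abs{2E}$ and the expected-dimensionality of the Brill--Noether intersection are where I expect the real work to lie.
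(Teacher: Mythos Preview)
Your treatment of parts \eqref{item:tecnico1}, \eqref{item:tecnico2}, and \eqref{item:tecnico3a} is correct and coincides with the paper's: it declares \eqref{item:tecnico1} and \eqref{item:tecnico2} trivial, and for \eqref{item:tecnico3a} runs exactly the contradiction you describe --- $R+2p\equiv 2a_1+2a_2$ yields a two-dimensional family of $g^1_4$'s with a $(2,2)$-fibre on $C$, while the Hurwitz space of such degree-$4$ covers has dimension $2g+2$, so a general curve carries at most a $(5-g)$-dimensional family.  Your finite-fibre check for the assignment $R\mapsto\abs{2a_1+2a_2}$ makes explicit what the paper leaves implicit.

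Where you diverge is at the boundary case $g=6$ of part \eqref{item:tecnico4}.  You correctly note that the Hurwitz count for degree-$6$ covers with a single $(2,2,2)$-fibre gives dimension $2g+5$, hence at most an $(8-g)$-dimensional family on a general curve --- which is exactly $2$ at $g=6$ and fails to contradict the two-parameter family coming from $R\in C_{2}$.  But you have overlooked a constraint already present in your own construction: the second member $R+2p+2q$ of the pencil has \emph{two} double points, so it is a $(2,2,1,1)$-fibre.  Incorporating this extra special fibre drops the Hurwitz dimension by one, to $2g+4$, and the family on a general curve becomes at most $(7-g)$-dimensional, i.e.\ $\leq 1$ for $g\geq 6$, which gives the contradiction.  (Case (3b) needs no such refinement: there $R+2p$ has only one double point, and the single-constraint bound $8-g\leq 3<4$ already suffices for $g\geq 5$.)  This is precisely the sense in which the paper's ``analogous'' argument goes through.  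Your proposed Brill--Noether workaround via $W^0_4\pa{C}\cap\pa{K_C-2W^0_3\pa{C}}$ is a legitimate alternative, but it trades the problem for verifying that this intersection has the expected dimension on a general curve, which is not automatic; the refined Hurwitz count is both simpler and self-contained.
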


We now give conditions under which the Prym map is a local embedding. We recall that the tangent space of $\mathcal{R}_{g,r}$ at $\pa{C, \eta, R}$ can be identified with the space of first-order deformations of the $r$-pointed curve $C$, i.e. with $H^1\pa{C, T_C\pa{-R}}$ (\cite[Chapter 3, Section B]{HM98}). Following the computation in \cite[Proposition 7.5]{Beauv77} (see also \cite[Proposition 4.1]{LangeOrtega} and \cite[Proposition 3.1]{pol1122} for the ramified case), we can identify the co-differential of the Prym map at $\pa{C, \eta, R}$ with the natural map
\begin{equation*}
d\prym^*\colon \sym^2H^0\pa{C, \omega_C\otimes \eta} \ra H^0\pa{C, \omega_C^2\otimes\fascio{C}\pa{R}}.
\end{equation*}

By \cite[Theorem 1]{GreenLaz} (see also \cite[Theorem 1.1]{LangeSernesi}), in order to prove the surjectivity of $d\prym^*$, it is sufficient to show that $\omega_C\otimes \eta$ is very ample and that the following inequality holds
\[
 \deg\pa{\omega_C\otimes\eta}\geq 2g+1-\cliff\pa{C}.
\]
By a straightforward computation we get the following proposition (cf. \cite[Section 5]{LangeOrtega}):

\begin{prop}
\label{prop:proplocaltorelli}
Let $\pa{C, \eta, R}\in \mathcal{R}_{g,r}$ and assume that one of the following conditions holds
\begin{enumerate}
  \item $r\geq 6$ and $g\geq 1$;
  \item $r=4$, $C$ is not hyperelliptic and $h^0\pa{\eta}=0$;
  \item $r=2$, $\cliff\pa{C}>1$ and $h^0\pa{\eta\otimes\fascio{C}\pa{p}}=0$ for each $p\in C$;
\end{enumerate}
then the differential of the Prym map at $\pa{C, \eta, R}$ is injective.
\end{prop}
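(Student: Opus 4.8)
The plan is to combine the description of the codifferential recalled just above the statement with the Green--Lazarsfeld criterion quoted there. Set $L:=\omega_C\otimes\eta$. Since $R\in\abs{\eta^2}$ we have $\fascio{C}\pa{R}\simeq\eta^2$, hence the target of $d\prym^*$ is $H^0\pa{C,\omega_C^2\otimes\eta^2}=H^0\pa{C,L^2}$ and, under this identification, $d\prym^*$ is exactly the multiplication map $m\colon\sym^2H^0\pa{C,L}\ra H^0\pa{C,L^2}$ of item \eqref{item:Laz}. A linear map of finite-dimensional vector spaces is injective if and only if its transpose is surjective, so $d\prym$ is injective precisely when $m$ is surjective; and by \cite[Theorem 1]{GreenLaz} it suffices to show that $L$ is very ample and that $\deg L\geq 2g+1-\cliff\pa{C}$. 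Since $\deg\eta=\tfrac r2$ we have $\deg L=2g-2+\tfrac r2$, so the numerical condition is just $r+2\cliff\pa{C}\geq 6$.

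First I would dispose of the case $r\geq 6$: then $\deg L\geq 2g+1$, so $L$ is very ample (indeed projectively normal) by the classical theory, and $\deg L\geq 2g+1\geq 2g+1-\cliff\pa{C}$ because the Clifford index is non-negative; hence $m$ is surjective. In the remaining cases $\deg L$ equals $2g$ (for $r=4$) or $2g-1$ (for $r=2$), and very ampleness must be checked by hand. Since $\deg\eta^{-1}=-\tfrac r2<0$ we have $h^1\pa{C,L}=h^0\pa{C,\eta^{-1}}=0$, so $h^0\pa{C,L}=\deg L+1-g$ by Riemann--Roch. For an effective divisor $D$ of degree $2$, Riemann--Roch and Serre duality give $h^0\pa{C,L\pa{-D}}=h^0\pa{C,L}-2+h^1\pa{C,L\pa{-D}}$ with $h^1\pa{C,L\pa{-D}}=h^0\pa{C,\eta^{-1}\otimes\fascio{C}\pa{D}}$; thus $L$ is very ample exactly when $\eta^{-1}\otimes\fascio{C}\pa{D}$ has no nonzero section for every such $D$. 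When $r=4$ this bundle has degree $0$, hence is effective only if $\eta\simeq\fascio{C}\pa{D}$, which is excluded by $h^0\pa{\eta}=0$. When $r=2$ it has degree $1$, hence is effective only if $\eta^{-1}\otimes\fascio{C}\pa{D}\simeq\fascio{C}\pa{x}$ for some $x\in C$, i.e. $\eta\otimes\fascio{C}\pa{x}\simeq\fascio{C}\pa{D}$ is effective, contradicting the hypothesis $h^0\pa{\eta\otimes\fascio{C}\pa{p}}=0$ for all $p\in C$. So $L$ is very ample in both cases.

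It remains to check the numerical inequality. For $r=4$ it reads $\cliff\pa{C}\geq 1$, which holds since $C$ is not hyperelliptic; for $r=2$ it reads $\cliff\pa{C}\geq 2$, which is the assumption $\cliff\pa{C}>1$. Hence \cite[Theorem 1]{GreenLaz} applies and $m$ is surjective, which finishes the proof. The computation is elementary throughout; the one step that needs a little care is the hand verification of very ampleness when $r\leq 4$, where the hypotheses on the (twisted) non-effectivity of $\eta$ are used precisely to kill $h^1\pa{C,L\pa{-D}}$ for all effective degree-two $D$, and where the Green--Lazarsfeld bound $\deg L\geq 2g+1-\cliff\pa{C}$ turns out to be tight, so none of it can be wasted.
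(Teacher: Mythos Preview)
Your proof is correct and follows essentially the same approach as the paper: reduce to surjectivity of the multiplication map for $L=\omega_C\otimes\eta$, apply \cite[Theorem 1]{GreenLaz}, and verify its two hypotheses (very ampleness and the degree bound) case by case. The paper simply records this reduction and declares the verification a ``straightforward computation''; you have carried out that computation in full, and the details are accurate.
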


In view of Lemma \ref{lemma:tecnico} we can restate Proposition \ref{prop:proplocaltorelli} in a more compact form.

\begin{cor}
\label{cor:Prymfinite}
The Prym map is generically finite (onto its image) if and only if $r\geq 6$ and $g\geq 1$, or $r=4$ and $g\geq 3$, or $r=2$ and $g\geq 5$.
\end{cor}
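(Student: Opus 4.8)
The plan is to combine Proposition \ref{prop:proplocaltorelli} with Lemma \ref{lemma:tecnico} for the "if" direction, and a dimension count for the "only if" direction. For the "if" direction, suppose first that $r\geq 6$ and $g\geq 1$: then hypothesis (1) of Proposition \ref{prop:proplocaltorelli} applies directly to every $\pa{C,\eta,R}$, so $d\prym$ is everywhere injective and $\prym$ is generically finite onto its image. If $r=4$ and $g\geq 3$, take a general point $\pa{C,\eta,R}\in\mathcal{R}_{g,r}$: by Lemma \ref{lemma:tecnico}\eqref{item:tecnico2}(b), $\eta$ is not effective, so $h^0\pa{\eta}=0$, and for $g\geq 3$ a general curve is not hyperelliptic, so hypothesis (2) of Proposition \ref{prop:proplocaltorelli} holds at the general point; hence $d\prym$ is generically injective. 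If $r=2$ and $g\geq 5$, take again $\pa{C,\eta,R}$ general: a general curve of genus $g\geq 5$ has $\cliff\pa{C}>1$ (indeed $\cliff(C)=\lfloor (g-1)/2\rfloor$ for general $C$, which is $\geq 2$ when $g\geq 5$), and by Lemma \ref{lemma:tecnico}\eqref{item:tecnico3a} we have $h^0\pa{\eta\otimes\fascio{C}\pa{p}}=0$ for every $p\in C$; so hypothesis (3) holds and $d\prym$ is generically injective. In all three cases generic injectivity of the differential implies that $\prym$ is generically finite onto its image.

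For the "only if" direction, I would argue that outside the listed range the source has strictly larger dimension than $\mathcal{A}_{g-1+\frac r2}^\delta$, so no generically finite morphism can exist. One has $\dim\mathcal{R}_{g,r}=3g-3+r$ (deformations of the $r$-pointed curve $C$, the choice of $\eta$ being discrete once $R\in\abs{\eta^2}$ is fixed — more precisely $\dim\mathcal{R}_{g,r}=3g-3+r$), while $\dim\mathcal{A}_{g-1+\frac r2}^\delta=\binom{g+\frac r2}{2}$. Comparing, $\dim\mathcal{R}_{g,r}\leq\dim\mathcal{A}_{g-1+\frac r2}^\delta$ fails exactly when the pair $(g,r)$ lies outside the corollary's list, i.e. for $r=0$, for $r=2$ with $g\leq 4$, and for $r=4$ with $g\leq 2$; in each such case the generic fibre of $\prym$ is positive-dimensional, so $\prym$ is not generically finite. (Alternatively, and this is how I would present it to avoid re-deriving the dimension formulas, one can cite the equivalence recalled in the introduction — see \cite{pol1122, LangeSernesi, GreenLaz} and the discussion there — that $\prym$ is generically finite onto its image if and only if $\dim\mathcal{R}_{g,r}\leq\dim\mathcal{A}_{g-1+\frac r2}^\delta$, and then it suffices to check the numerical inequality.)

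The only real subtlety is making sure the hypotheses of Proposition \ref{prop:proplocaltorelli} are genuinely satisfied at a \emph{general} point in each of the three boundary cases, which is precisely the content of Lemma \ref{lemma:tecnico} together with the standard fact that the general curve of genus $g$ has Clifford index $\lfloor(g-1)/2\rfloor$ and is non-hyperelliptic for $g\geq 3$. I expect the main obstacle — such as it is — to be bookkeeping: correctly matching each triple $(r,g)$ in the corollary's statement to the correct item of Lemma \ref{lemma:tecnico} and to the correct hypothesis of Proposition \ref{prop:proplocaltorelli}, and confirming that the excluded cases are exactly those where the numerical inequality reverses. Since Proposition \ref{prop:proplocaltorelli} already reduces everything to the (non)effectivity statements proved in Lemma \ref{lemma:tecnico}, no new geometric input is required.
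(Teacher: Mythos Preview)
Your proposal is correct and follows the same approach as the paper: for the ``if'' direction you invoke Proposition~\ref{prop:proplocaltorelli} and verify its hypotheses at a general point via Lemma~\ref{lemma:tecnico} (together with the standard facts about the Clifford index and non-hyperellipticity of a general curve), exactly as the paper does in one sentence; for the ``only if'' direction you use the dimension comparison $\dim\mathcal{R}_{g,r}>\dim\mathcal{A}_{g-1+r/2}^\delta$, again matching the paper. The only difference is that you spell out the case-by-case bookkeeping explicitly, whereas the paper compresses it into a single reference to Lemma~\ref{lemma:tecnico}; your inclusion of $r=0$ in the ``only if'' discussion is extraneous (the paper treats only ramified coverings) but harmless.
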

\begin{proof}
Lemma \ref{lemma:tecnico} proves that the statement of Proposition \ref{prop:proplocaltorelli} is realized for a general point $\pa{C, \eta, R}$. Thus the \emph{if} part is proved.

For the \emph{only if} direction, we notice that if $r=4$ and $g<3$, or $r=2$ and $g< 5$, then $\dim \mathcal{R}_{g,r}> \mathcal{A}_{g-1+\frac{r}{2}}^{\delta}$.
\end{proof}

\subsection{Quadrics vanishing on the semi-canonical model}
\label{subsec:quadrics}

We start with the following definition.

\begin{defin}
Let $\pa{C, \eta, R}$ be a point of $\mathcal{R}_{g,r}$. The projective map
\[
f_\eta\colon C \ra \mathbb{P}H^0\pa{C, \omega_C\otimes \eta}^*,
\]
defined by the linear system $\abs{\omega_C\otimes \eta}$, is called  \emph{semi-canonical map} and $C_\eta:=f_\eta\pa{C}$ is the \emph{semi-canonical model} of $C$ (cf. \cite{DebarreTorelliPrym}).
\end{defin}

In the present section we consider a point $\pa{C, \eta, R}\in \mathcal{R}_{g,r}$ that satisfies the hypotheses of Proposition \ref{prop:proplocaltorelli}. In particular $\omega_C\otimes \eta$ is very ample and $f_\eta$ is an embedding. We recall that the co-differential of the Prym map at $\pa{C, \eta, R}$ is the surjective map
\[
d\prym^{*}\colon \sym^2H^0\pa{C, \omega_C\otimes \eta}\ra H^0\pa{C, \omega_C^2\otimes\fascio{C}\pa{R}}.
\]
As we have remarked in \eqref{item:Laz} of Section \ref{subsec: preliminaries}, $I_{2}\pa{C}:=\ker d\prym^{*}$ is the space of the homogeneous quadratic polynomials vanishing on $C_\eta$, thus the image of $d\prym$ in $T_{\prym\pa{C, \eta, R}}$ determines the projective space $\mathbb{P}I_2\pa{C_\eta}$ of the quadrics containing $C_\eta$ (see \eqref{eq:incldiff} of Section \ref{intr}). The aim is to recover the curve $C_\eta$ in the intersection of the quadrics. We remark that, by a dimensional count, in the following cases
\[
r=6 \text{ and } g=2, \qquad r=4 \text{ and } g=4, \qquad r=2 \text{ and } g=5
\]
this is not possible.

We recall that there is a natural bijective correspondence  between the points of $\mathbb{P}H^0\pa{C, \omega_C\otimes \eta}^*$ lying in the intersection of the quadrics of $\mathbb{P}I_2\pa{C_\eta}$ and the extensions (up to isomorphism and multiplication by a scalar)
\begin{equation*}
0\ra \eta^{-1} \ra F \ra \omega_C\otimes \eta \ra 0
\end{equation*}
with connecting homomorphism $\delta\colon H^0\pa{C,\omega_C\otimes \eta} \ra H^1\pa{C, \eta^{-1}}$ of rank $1$ (see \cite[Lemma 1.2]{LangeSernesi}). Given $p\in \mathbb{P}H^0\pa{C, \omega_C\otimes \eta}^*$ in the intersection of the quadrics, the image of the corresponding $\delta$ in $H^1\pa{C,\eta^{-1}}= H^0\pa{C, \omega_C\otimes \eta}^*$ is the $1$-dimensional vector space defining $p$.

In the proof of Theorem \ref{theorem:quadriche} we will classify all the extensions of the previous type. In order to do this, we will need the following technical results on the number of points of order two lying on the theta divisor of a Jacobian. For a more exhaustive analysis of this topic we refer to \cite{miatesi}.

\begin{prop}
\label{prop:ordinedue}
Let $C$ be a general curve of genus $g$, then the followings hold.
 \begin{enumerate}
  \item \label{item:punti di ordine 2}Let $\Theta$ be a symmetric theta divisor in $J\pa{C}$. For each $a \in A$ there exists a point of order two not contained in $t_a^*\Theta$.
  \item \label{item:ptiord2perapplicazione} Let $M_1,\ldots M_N$ be a finite number of line bundles of degree $d\leq g-1$ on $C$. Given an integer $k \leq g-1-d$, if $\eta$ is a general line bundle of degree $k$ such that $h^0\pa{\eta^2}>0$, then
\[
h^0\pa{\eta \otimes M_i}=0 \qquad \forall i=1,\ldots, N.
\]
 \end{enumerate}
\end{prop}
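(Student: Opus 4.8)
The plan is to prove the two assertions separately, the first feeding into the hard part of the second; throughout I write $A=J\pa{C}$ and $A[2]$ for its group of $2^{2g}$ points of order two.

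\emph{For assertion \eqref{item:punti di ordine 2}} I would first observe that $\epsilon\notin t_a^*\Theta$ is the same as $a+\epsilon\notin\Theta$, so what must be shown is that no coset $a+A[2]$ is entirely contained in $\Theta$. Suppose $a+A[2]\subset\Theta$. Let $s_0\in H^0\pa{A,\mathcal{O}_A\pa{2\Theta}}$ be the square of the section defining $\Theta$, so $\operatorname{div}\pa{s_0}=2\Theta$. Since $\phi_{\mathcal{O}_A\pa{2\Theta}}$ kills $A[2]$, for each $\epsilon\in A[2]$ a lift of $\epsilon$ in the theta group $\mathcal{G}\pa{\mathcal{O}_A\pa{2\Theta}}$ acts on $H^0\pa{A,\mathcal{O}_A\pa{2\Theta}}$ and sends $s_0$ to a section $s_\epsilon$ with divisor $2\pa{\Theta-\epsilon}$; thus $s_\epsilon$ vanishes at $x$ precisely when $x+\epsilon\in\Theta$, and so every $s_\epsilon$ vanishes on the whole coset $a+A[2]$. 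The span of $\set{s_\epsilon:\epsilon\in A[2]}$ is stable under $\mathcal{G}\pa{\mathcal{O}_A\pa{2\Theta}}$, which acts irreducibly on the $2^g$-dimensional space $H^0\pa{A,\mathcal{O}_A\pa{2\Theta}}$; hence that span is everything and \emph{every} section of $\mathcal{O}_A\pa{2\Theta}$ vanishes at $a$, contradicting the base-point-freeness of $\abs{2\Theta}$. (Concretely, this is Riemann's bilinear addition relation, which writes $\theta\pa{z+\epsilon}^2$, up to a nowhere-zero factor, as $\sum_\sigma\Theta[\sigma]\pa{\epsilon}\Theta[\sigma]\pa{z}$ in the second-order theta functions, combined with the classical fact that the $2^{2g}$ half-periods span $\mathbb{P}H^0\pa{A,\mathcal{O}_A\pa{2\Theta}}^*$.)

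\emph{For assertion \eqref{item:ptiord2perapplicazione}} I would translate everything into Brill--Noether geometry on $\pic\pa{C}$. Writing $W_m\subset\pic^m\pa{C}$ for the locus of effective classes, the condition $h^0\pa{\eta\otimes M_i}>0$ says exactly that $\eta\in W_{k+d}\otimes M_i^{-1}$, a translate of a Brill--Noether locus of dimension $\min\pa{k+d,g}=k+d$ (recall $k+d\leq g-1$). On the other hand
\[
Z:=\set{\eta\in\pic^k\pa{C}:h^0\pa{\eta^2}>0}
\]
is the full inverse image $[2]^{-1}\pa{W_{2k}}$ under multiplication by two $[2]\colon\pic^k\pa{C}\to\pic^{2k}\pa{C}$, which is finite étale of degree $2^{2g}$; hence $\dim Z=\dim W_{2k}=\min\pa{2k,g}$, and for a general curve $Z$ is irreducible ($W_{2k}$ is irreducible and normal, and $\pi_1\pa{W_{2k}}\to\pi_1\pa{\pic^{2k}\pa{C}}$ is onto, so the $A[2]$-torsor $Z\to W_{2k}$ is connected). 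It is therefore enough to check that $Z$ is contained in none of the $W_{k+d}\otimes M_i^{-1}$; and when $2k\geq g$ or $k>d$ this is immediate, because $\dim Z=\min\pa{2k,g}>k+d$.

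\emph{The main obstacle} is the remaining range $2k\leq g-1$, $k\leq d$, where the dimension count is useless and assertion \eqref{item:punti di ordine 2} must be brought in. Suppose $Z\subset W_{k+d}\otimes M_i^{-1}$ for some $i$. As $Z$ is the full preimage $[2]^{-1}\pa{W_{2k}}$, every $\zeta\in W_{2k}$ has all of its $2^{2g}$ square roots inside $Z$, hence inside $W_{k+d}\otimes M_i^{-1}$; twisting by $M_i$, the set $\set{\eta\otimes M_i:\eta^2=\zeta}$, which is an entire coset of $A[2]$ in $\pic^{k+d}\pa{C}$, is contained in $W_{k+d}$. Adding to it a class of the nonempty locus $W_{g-1-k-d}$ then produces a coset of $A[2]$ contained in $W_{g-1}$. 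But $W_{g-1}$ is a translate of a symmetric theta divisor, so this is ruled out by assertion \eqref{item:punti di ordine 2}. Hence $Z$ lies in no $W_{k+d}\otimes M_i^{-1}$, a general $\eta\in Z$ avoids all of them, and the proof is complete. The delicate points to be careful about are the irreducibility of $Z$ (so that ``general $\eta\in Z$'' really sees all square roots of a given $\zeta$) and, in assertion \eqref{item:punti di ordine 2}, the non-degenerate position of the half-periods in the Kummer embedding.
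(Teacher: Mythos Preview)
Your proof is correct, but the two parts take rather different routes from the paper.

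For \eqref{item:punti di ordine 2}, the paper degenerates $J\pa{C}$ to a product of elliptic curves and counts: in that case $t_a^*\Theta$ contains at most $2^{2g}-3^g$ two-torsion points. Your theta-group argument is cleaner and more general---it needs no degeneration and proves the statement for every principally polarized abelian variety, not only for Jacobians of general curves (indeed the remark following the proposition records the bound $2^{2g}-2^g$ for arbitrary ppav's, which your argument recovers in the weak form needed here). For \eqref{item:ptiord2perapplicazione}, both you and the paper perform the same reduction to \eqref{item:punti di ordine 2}: tensor up by an effective class of degree $g-1-k-d$ to land in $W_{g-1}$ and then apply \eqref{item:punti di ordine 2}. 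The paper does this in one line, replacing $L$ by $M_1^2\otimes L\otimes\fascio{C}\pa{p}^{2n}$ to reduce to $M_1\simeq\fascio{C}$ and $k=g-1$; your version is the same idea unpacked. Your preliminary dimension-count case split is unnecessary (the reduction to \eqref{item:punti di ordine 2} already covers all cases uniformly), but your explicit irreducibility argument for $Z=[2]^{-1}\pa{W_{2k}}$ via $\pi_1\pa{C_{2k}}\twoheadrightarrow\pi_1\pa{\pic^{2k}\pa{C}}$ is a genuine addition: it is what justifies passing from ``$Z\not\subset W_{k+d}\otimes M_i^{-1}$'' to ``general $\eta\in Z$ avoids it'', a point the paper leaves implicit. (For $k=0$ the locus $Z=A[2]$ is discrete and your irreducibility argument does not apply, but this case does not occur in the paper's application.)
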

\begin{proof}
\begin{enumerate}
Statement \eqref{item:punti di ordine 2} can be proved by reducing $J\pa{C}$ to the product of elliptic curves. In this case $t_a^*\Theta$ contains at most $2^{2g}-3^g$ points of order two.

We will prove statement \eqref{item:ptiord2perapplicazione} for $N=1$. The general case follows immediately.

Let $\Lambda:=\set{\eta \in \pic^{k}\pa{C}: h^0\pa{\eta^2}>0}$, we prove that $\Lambda':=\set{\eta \in \Lambda: h^0\pa{M_1\otimes\eta}>0}$ is a proper subset. Namely we claim that, given  an effective $L\in \pic^{2k}\pa{C}$, there exists a line bundle $\eta\in \Lambda \setminus \Lambda'$ such that $\eta^2\simeq L$. Setting $n:=g-1-k-d$ and considering the line bundle $M_1^2 \otimes L \otimes \fascio{C}\pa{p}^{2n}$, where $p$ is an arbitrary point of $C$, we can assume $M_1\simeq \fascio{C}$ and $k=g-1$. Then the result follows from \eqref{item:punti di ordine 2}.
\end{enumerate}
\end{proof}

\begin{rmk}
Statement \eqref{item:punti di ordine 2} of Proposition \ref{prop:ordinedue} can be further improved. Namely, let $A$ be a principally polarized abelian variety of dimension $g$ and let $\Theta$ be its a symmetric theta divisor. For each $a \in A$ there are at most $2^{2g}-2^{g}$ points of order two lying on $t_a^*\Theta$. (see \cite[Proposition 2.21]{miatesi}) If $\Theta$ is irreducible and $t_a^*\Theta$ is not symmetric with respect to the origin, the statement also holds for $2^{2g}-\pa{g+1}2^{g}$ points. One might expect the right bound to be $2^{2g}-3^g$ as in the product of elliptic curves.
\end{rmk}

In the proof of Theorem \ref{theorem:quadriche} we will also use the following lemma (see for instance \cite[Lemma X.7]{BeauvilleBook}).

\begin{lemma}
\label{lemma:subline}
Let $C$ be a smooth projective curve and let $F$ be a vector bundle of rank $2$ on $C$. If
\begin{equation}
\label{eq:sezvectorbundle}
  2h^0\pa{F}-3 > h^0\pa{\det F},
\end{equation}
then there exists a line bundle $L\subset F$ such that $h^0\pa{L}\geq 2$ and $F/L$ is a line bundle.
\end{lemma}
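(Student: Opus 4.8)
The plan is to exploit the \emph{wedge map}
\[
w\colon \Lambda^2 H^0(C,F) \to H^0(C,\det F),\qquad s\wedge t \longmapsto s\wedge t,
\]
and to produce a nonzero \emph{decomposable} element in its kernel. The point is that if $0\neq\omega=s\wedge t\in\ker w$, then $s,t\in H^0(F)$ are linearly independent sections whose wedge vanishes identically on $C$; hence the subsheaf of $F$ they generate has generic rank $1$, and its saturation $L\subset F$ is a sub-line-bundle with $F/L$ locally free of rank $1$ and $h^0(L)\geq 2$ — which is precisely the assertion. So the whole content reduces to: under the hypothesis, $\ker w$ meets the Grassmann cone of decomposable tensors nontrivially.

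First I would record two elementary facts about $N:=h^0(F)$. Since $h^0(\det F)\geq 0$, the hypothesis forces $N\geq 2$, and, read over the integers, it says $h^0(\det F)\leq 2N-4$. Next I compare with $\dim\Lambda^2 H^0(F)=\binom N2$: because $\binom N2-(2N-3)=\tfrac12(N-2)(N-3)\geq 0$ for every integer $N\geq 2$, one has $\binom N2\geq 2N-3>2N-4\geq h^0(\det F)$. Since $\operatorname{rk} w\leq h^0(\det F)<\binom N2$, the map $w$ cannot be injective, so $K:=\ker w\neq 0$, and moreover $\dim K\geq\binom N2-h^0(\det F)\geq\binom N2-(2N-4)$.

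Then I would apply the projective dimension theorem inside $\mathbb{P}\Lambda^2 H^0(F)\cong\mathbb{P}^{\binom N2-1}$. The Plücker-embedded Grassmannian $G\bigl(2,H^0(F)\bigr)$ of decomposable classes has dimension $2(N-2)=2N-4$, while the linear subspace $\mathbb{P}(K)$ has dimension $\geq\binom N2-2N+3$. Their dimensions add up to at least $\binom N2-1=\dim\mathbb{P}\Lambda^2 H^0(F)$, hence $\mathbb{P}(K)\cap G\bigl(2,H^0(F)\bigr)\neq\emptyset$; a point of this intersection is exactly a nonzero decomposable $\omega=s\wedge t$ in $\ker w$. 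Feeding this back into the first paragraph — using that on a smooth curve the saturation of a generically rank-$1$ subsheaf of a rank-$2$ bundle is a sub-line-bundle with locally free quotient — completes the proof. The case $N=2$ needs no separate treatment: there $G\bigl(2,H^0(F)\bigr)$ is a single point and the same numerology applies.

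The step I expect to demand the most care is this dimension count: one must verify that the inequality $h^0(\det F)\leq 2N-4$, which is precisely the stated strict inequality reinterpreted over $\mathbb{Z}$, is exactly what yields $\dim\mathbb{P}(K)+\dim G\bigl(2,H^0(F)\bigr)\geq\dim\mathbb{P}\Lambda^2 H^0(F)$ — there is no slack, so the hypothesis cannot be weakened by this argument. The only other routine point is the sheaf-theoretic identification in the opening paragraph, namely that vanishing of $s\wedge t$ in $H^0(\det F)$ is equivalent to $s$ and $t$ spanning a rank-$1$ subsheaf.
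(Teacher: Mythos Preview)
Your argument is correct and is essentially the standard proof: the paper does not supply its own argument for this lemma but merely cites \cite[Lemma~X.7]{BeauvilleBook}, where exactly this wedge-map/Grassmannian intersection computation is carried out. Your dimension count is tight and the sheaf-theoretic identification (decomposable kernel element $\Rightarrow$ saturated sub-line-bundle with at least two sections) is the right one, so there is nothing to add.
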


\begin{theorem}
\label{theorem:quadriche}
Let $\pa{C, \eta, R} \in \mathcal{R}_{g,r}$ and let $\mathbb{P}I_2\pa{C_\eta}$ be the space of the quadrics vanishing on the semi-canonical model $C_\eta$ of $C$.
\begin{enumerate}
\item \label{eq:itemquadr1}In the following cases:
\begin{itemize}
  \item $r\geq 8$ and $g\geq 1$;
  \item  $r=6$, $C$ is not hyperelliptic and $h^0\pa{\eta}=0$;
  \item  $r=4$, $\cliff\pa{C}>1$ and $h^0\pa{\eta\otimes\fascio{C}\pa{p}}=0$ for each $p\in C$;
  \item  $r=2$, $\cliff\pa{C}>2$ and $h^0\pa{\eta\otimes\fascio{C}\pa{p+q}}=0$ for each $p,q\in C$.
\end{itemize}
The intersection of the quadrics of $\mathbb{P}I_2\pa{C_\eta}$ is, set-theoretically, $C_\eta$.
\item \label{eq:itemquadr2} If $r=6$, $g=3$, $C$ is not hyperelliptic and $h^0\pa{\eta}=1$, the intersection of the quadrics of $\mathbb{P}I_2\pa{C_\eta}$ consists of the curve $C_\eta$ and a line (note that $C_\eta$ is not a plane curve).
\item \label{eq:itemquadr3} If $r=2$, $g=6$, and $\pa{C, \eta, R}$ is a general point of $\mathcal{R}_{g,r}$, the intersection of the quadrics of $\mathbb{P}I_2\pa{C_\eta}$ consists of the curve $C_\eta$ and of five projective lines.
\end{enumerate}
\end{theorem}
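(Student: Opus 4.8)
The plan is to analyze the extensions
\[
0\ra \eta^{-1} \ra F \ra \omega_C\otimes \eta \ra 0
\]
whose connecting homomorphism $\delta\colon H^0(C,\omega_C\otimes\eta)\ra H^1(C,\eta^{-1})$ has rank $1$, since by \cite[Lemma 1.2]{LangeSernesi} these correspond bijectively to the points of $\mathbb{P}H^0(C,\omega_C\otimes\eta)^*$ in the base locus of $\mathbb{P}I_2(C_\eta)$. So let $p$ be such a point and $F$ the associated rank-$2$ bundle; note $\det F=\omega_C$, so $h^0(\det F)=g$. The first step is to compute $h^0(F)$: from the long exact sequence, $h^0(F)\geq h^0(\eta^{-1})+\bigl(h^0(\omega_C\otimes\eta)-1\bigr)$ because $\delta$ has rank $1$, and using Riemann--Roch $h^0(\omega_C\otimes\eta)=g-1+\frac r2$ (as $h^0(\eta^{-1})=0$ under our hypotheses, granted by Lemma \ref{lemma:tecnico} / Proposition \ref{prop:proplocaltorelli}), one gets $h^0(F)\geq g-2+\frac r2$. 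The point now is that when $r$ is large (the cases in \eqref{eq:itemquadr1}) the inequality $2h^0(F)-3>h^0(\det F)=g$ of Lemma \ref{lemma:subline} holds, so $F$ contains a line subbundle $L$ with $h^0(L)\geq 2$ and $F/L$ a line bundle. Write $F/L=\omega_C\otimes L^{-1}$ and observe that the composite $\eta^{-1}\hookrightarrow F\ra \omega_C\otimes L^{-1}$ is either zero or nonzero.

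In the zero case $\eta^{-1}\subset L$ and one compares degrees: $\deg L\leq \deg(\omega_C\otimes\eta)$ forces $\deg(L\otimes\eta)\leq 2g-2$, and because $h^0(L)\geq 2$ one produces a pencil whose associated divisor $L\otimes\eta$ (or a suitable twist of it) has small Clifford index, contradicting the Clifford-index hypothesis — or, in the effectivity-free hypotheses on $\eta\otimes\fascio{C}(p)$ resp.\ $\eta\otimes\fascio{C}(p+q)$, contradicts those directly. In the nonzero case the composite $\eta^{-1}\ra\omega_C\otimes L^{-1}$ is an inclusion of line bundles, so $L\subset\omega_C\otimes\eta$, and again $h^0(L)\geq 2$ combined with $\deg L\leq 2g-2$ gives a low Clifford index, unless $L$ is very special of a controlled type; the point $p$ then lies on the image $f_\eta(L)$-locus, and one checks that the only $L$ surviving the numerical constraints is $L=\omega_C\otimes\eta\otimes\fascio{C}(-x)$ for a point $x\in C$, which forces $p=f_\eta(x)\in C_\eta$. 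Conversely every point of $C_\eta$ lies on all quadrics of $I_2(C_\eta)$ (a point of a curve is always in the intersection of quadrics through the curve), so set-theoretically the base locus is exactly $C_\eta$. The main obstacle is making the Clifford-index bookkeeping tight: one must rule out the borderline special line bundles $L$ that almost satisfy the degree inequalities, which is exactly why the hypotheses read $\cliff(C)>1$ for $r=4$ and $\cliff(C)>2$ for $r=2$ rather than the naive bounds, and why the excluded cases $(r,g)=(6,2),(4,4),(2,5)$ appear.

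For \eqref{eq:itemquadr2}, when $r=6$, $g=3$ and $h^0(\eta)=1$, the bundle $F$ has $h^0(F)\geq g-2+\tfrac r2=4$ and $h^0(\det F)=h^0(\omega_C)=3$, so Lemma \ref{lemma:subline} still applies and yields $L\subset F$ with $h^0(L)\geq 2$. Now the effectivity of $\eta$ is allowed, so the argument above no longer forces $p\in C_\eta$: in the zero case $\eta^{-1}\subset L$ one can have $L=\eta$ (using $h^0(\eta)=1$ is not quite $\geq 2$, so in fact one finds $L=\eta\otimes\fascio{C}(D)$ for an effective $D$), and tracking degrees precisely one sees the extra solutions sweep out exactly one extra component. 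I would show that this component is a line by identifying, via the correspondence of \cite[Lemma 1.2]{LangeSernesi}, the extensions with $\eta^{-1}\subset L=\eta$: these form a $\mathbb{P}^1$ because $h^0(\eta^{-1}\otimes L^{-1}\otimes\text{(appropriate twist)})$ computes out to dimension $2$. One also records that $C_\eta\subset\mathbb{P}^4$ is not a plane curve: $\deg(\omega_C\otimes\eta)=4+3=7>$ twice the genus minus $2$ of a possible plane model, and $h^0(\omega_C\otimes\eta)=5$, so $C_\eta$ spans $\mathbb{P}^4$.

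For \eqref{eq:itemquadr3}, the case $r=2$, $g=6$: here $C_\eta\subset\mathbb{P}^5$ has degree $\deg(\omega_C\otimes\eta)=12$ and $h^0(\omega_C\otimes\eta)=6$. For a general $(C,\eta,R)$, Lemma \ref{lemma:tecnico}\eqref{item:tecnico4} says $\eta\otimes\fascio{C}(p+q)$ is never effective, but the hypothesis $\cliff(C)>2$ cannot hold (a general curve of genus $6$ has Clifford index $2$, realized by the five $g^1_4$'s), so case \eqref{eq:itemquadr1} just fails. I would rerun the $L\subset F$ analysis: $h^0(F)\geq g-2+\tfrac r2=5$, $h^0(\det F)=6$, so Lemma \ref{lemma:subline}'s inequality $2\cdot5-3=7>6$ holds and a subpencil $L$ exists. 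The nonzero-composite case now admits, besides $L=\omega_C\otimes\eta\otimes\fascio{C}(-x)$ (giving $C_\eta$), the solutions coming from a $g^1_4$ on $C$: if $|A|$ is one of the five $g^1_4$'s then $L=A$ works, $\deg(\omega_C\otimes\eta\otimes A^{-1})=8$ with a section making the numerics balance, and the corresponding extensions form a $\mathbb{P}^1$ (again a dimension count on the relevant $\Hom$ / $\ext$ space). Since a general genus-$6$ curve has exactly five $g^1_4$'s, this produces exactly five lines in addition to $C_\eta$, and one checks no further component arises by exhausting the degree inequalities. The hard part throughout \eqref{eq:itemquadr3} is verifying that each such $g^1_4$ contributes precisely a line (not a higher-dimensional locus) and that the five lines, the curve $C_\eta$, and nothing else account for the entire base locus — this is where the precise correspondence with extensions and a careful Clifford-theoretic case analysis are indispensable.
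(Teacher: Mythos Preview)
Your overall framework matches the paper's: translate points of the base locus into extensions $0\to\eta^{-1}\to F\to\omega_C\otimes\eta\to 0$ with $\rk\delta=1$, apply Lemma~\ref{lemma:subline} to produce $L\subset F$ with $h^0(L)\geq2$, and constrain $L$. But several steps are either misdirected or genuinely incomplete.

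\smallskip
\emph{A cleaner dichotomy.} Rather than splitting on whether $\eta^{-1}\to F/L$ vanishes, the paper observes that the composite $\tau\colon L\hookrightarrow F\to\omega_C\otimes\eta$ is \emph{always} nonzero (since $h^0(L)\geq2$ forces $\deg L>0>-\tfrac r2=\deg\eta^{-1}$, so $L\not\subset\eta^{-1}$). Hence $h^0(\omega_C\otimes\eta\otimes L^{-1})>0$ unconditionally. One then splits on whether $L$ is special. If not, $h^0(L)=h^0(F)=h^0(\omega_C\otimes\eta)-1$ forces $L=\omega_C\otimes\eta(-p)$, giving exactly the points of $C_\eta$. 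If $L$ is special, the diagram yields $h^0(L)+h^0(\omega_C\otimes L^{-1})\geq h^0(F)$, and combining with Riemann--Roch gives the clean bound
\[
\cliff(L)\ \leq\ 3-\tfrac r2,
\]
which together with $h^0(\omega_C\otimes\eta\otimes L^{-1})>0$ is impossible under the hypotheses of \eqref{eq:itemquadr1}. Your case split works in principle but never isolates this inequality, and the ``Clifford bookkeeping'' you sketch does not visibly reach it.

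\smallskip
\emph{Part \eqref{eq:itemquadr2}: wrong $L$.} With $r=6$, $g=3$ the bound reads $\cliff(L)\leq0$, and since $C$ is non-hyperelliptic the only possibility is $L=\omega_C$. Then $h^0(F)=4=h^0(\omega_C)+h^0(\fascio C)$ forces the vertical sequence to split, so $F=\omega_C\oplus\fascio C$. Since $h^0(\eta)=1$ there is a unique nonzero map $\omega_C\to\omega_C\otimes\eta$, and the resulting $2$-dimensional family of extensions corresponds to the line $\mathbb{P}H^0(\omega_C)\subset\mathbb{P}H^0(\omega_C\otimes\eta)$. Your proposal $L=\eta$ or $L=\eta(D)$ is off target (and $h^0(\eta)=1<2$, as you note).

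\smallskip
\emph{Part \eqref{eq:itemquadr3}: a numerical slip and a real gap.} First, $\deg(\omega_C\otimes\eta)=10+1=11$, not $12$. More seriously, your argument shows only that the base locus is \emph{contained in} $C_\eta$ together with at most five lines; it does not prove the five lines actually lie on every quadric. The paper closes this with a scheme-theoretic degree count: $\dim I_2(C_\eta)=4$ in $\mathbb{P}^5$, so the intersection of the quadrics is a complete intersection, hence Cohen--Macaulay of degree $2^4=16$ with no embedded components; since $\deg C_\eta=11$, the remaining degree $5$ forces five reduced lines. Along the way the paper also needs (i) Proposition~\ref{prop:ordinedue} to exclude $L\simeq\omega_C\otimes M_i^{-1}$ for general $\eta$, and (ii) explicit computations that for each $g^1_4$ $M_i$ the vertical extension is unique up to scalar and $\dim\Hom(F,\omega_C\otimes\eta)=2$, so each $M_i$ contributes at most a line. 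Your sketch identifies the five $g^1_4$'s correctly but omits both the exclusion of the dual $g^2_6$'s and the degree-$16$ argument, without which equality does not follow.
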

\begin{proof}
In view of the previous discussion, we want to classify the extensions of type
\begin{equation}
\label{eq: extgenT}
  0 \ra \eta^{-1} \ra F \ra  \omega_C\otimes \eta \ra 0
\end{equation}
with connecting homomorphism of rank $1$. Since, by hypothesis, $g\geq 7-r$, we can apply Lemma \ref{lemma:subline}: $F$ has a sub-line bundle $L$ such that $h^0\pa{L}\geq 2$ and $F/L$ is a line bundle. Fixing such a sub-line bundle the situation is summarized in the following diagram, in which \eqref{eq: extgenT} appears as the horizontal sequence.
\footnotesize
\begin{equation}
\label{eq:diagL}
\xymatrix{
&&0\ar[d]&&\\
&&L \ar@{-->}[dr]^{\tau}\ar[d]&&\\
0\ar[r]&\eta^{-1}\ar[r]&F\ar[d]\ar[r]&\omega_C\otimes \eta \ar[r]&0\\
&&\omega_C\otimes L^{-1}\ar[d]&&\\
&&0&&\\
}
\end{equation}
\normalsize
We notice that, since the map $\tau$ of diagram \eqref{eq:diagL} is not zero, it holds that:
\begin{equation}
\label{eq:taunotzero}
h^0\pa{\omega_C\otimes \eta \otimes L^{-1}}>0.
\end{equation}
If $L$ is not special, $h^0\pa{L}=h^0\pa{F}=h^0\pa{\omega_C\otimes \eta}-1$ and so $L=\omega_C\otimes \eta\otimes\fascio{C}\pa{-p}$ for some $p\in C$. By diagram \eqref{eq:diagL} we get
\begin{equation*}
\begin{split}
 h^0\pa{L} & +h^0\pa{\omega_C \otimes L^{-1}}\geq h^0\pa{F},\\
 h^0\pa{L} & -h^0\pa{\omega_C\otimes L^{-1}}= \deg L+1-g.
 \end{split}
 \end{equation*}
Thus, when $L$ is special, we obtain
\begin{equation}
\label{eq:cliff2}
3-\frac{r}{2} \geq \cliff\pa{L} \geq 0.
\end{equation}

\textbf{Proof of \eqref{eq:itemquadr1}:} In the hypotheses of \eqref{eq:itemquadr1}, equations \eqref{eq:taunotzero} and \eqref{eq:cliff2} are never simultaneously satisfied. Thus $L$ is not special and $C_\eta$ is, set-theoretically, the intersection of quadrics.

\textbf{Proof of \eqref{eq:itemquadr2}:} By  \eqref{eq:cliff2} and \eqref{eq:taunotzero}, we get $L=\omega_C$ and $F=\omega_C\oplus \fascio{C}$. Furthermore, by hypothesis, there is only a non-zero morphism $\omega_C \ra \omega_C\otimes \eta$. Therefore there is a $2$ dimensional family of extensions of type
\[
0\ra \eta^{-1}\ra \omega_C \oplus \fascio{C} \ra \omega_C\otimes \eta \ra 0
\]
corresponding to the points of the line determined by
\[
H^0\pa{\omega_C}\subset H^0\pa{\omega_C\otimes \eta}.
\]

\textbf{Proof of \eqref{eq:itemquadr3}:} The proof is long and it will be divided into $4$ steps. We recall at first that on a general curve $C$ of genus $6$ there are exactly $5$ non-isomorphic line bundles $M_1,\ldots ,M_5$ of degree $4$ such that $h^0\pa{M_i}=2$ (\cite[Chapter 5]{acgh}).

\emph{Step I: $L \simeq M_i$ for some $i$.}

\noindent By \eqref{eq:cliff2} we have $\cliff\pa{L}=2$. Since $C$ is a general curve of genus $6$, either $\deg L=4$ and $h^0\pa{L}=2$, or $\deg L=6$ and $h^0\pa{L}=3$, or $\deg L=8$ and $h^0\pa{L}=4$. In the last case $L\simeq \omega_C\otimes \fascio{C}\pa{-p-q}$ for some $p,q\in C$, but this contradicts \eqref{item:tecnico4} of Lemma \ref{lemma:tecnico}.  It follows that either $L\simeq M_i$ or $L\simeq \omega_C\otimes M_i^{-1}$ for some $i=1,\ldots, 5$. By \eqref{item:ptiord2perapplicazione} of Proposition \ref{prop:ordinedue},
\begin{equation}
\label{eq:noneffect}
h^0\pa{M_i\otimes\eta}=0 \qquad \forall i=1, \ldots, 5,
\end{equation}
for a  general $\eta$. Then there are no non-zero maps from $\omega_C\otimes M_i^{-1}$ to $\omega_C\otimes \eta$ and we can assume $L\simeq M_i$.

\emph{Step II: the vector bundle $F$ is determined by $M_i\subset F$.}

\noindent Notice that
\[
h^0\pa{M_i}+h^0\pa{\omega_C\otimes M_i^{-1}}=5=h^0\pa{F}.
\]
It follows that the vertical exact sequence of diagram \eqref{eq:diagL} (see Proposition \ref{prop:proplocaltorelli}), that is
\begin{equation}
\label{eq:exseqTorelli}
0\ra M_i \ra F \ra \omega_C\otimes M_i^{-1}\ra 0,
\end{equation}
is exact on the global sections. Furthermore, by \eqref{eq:noneffect}, it cannot split. The dimension of the space of the extensions of this type is the dimension of the co-kernel of the map
\[
m \colon \sym^2H^0\pa{C, \omega_C\otimes M_i^{-1}}\ra  H^0\pa{C, \omega_C^{2} \otimes M_i^{-2}}.
\]
The image of $C$ under the morphism associated to $\abs{\omega_C\otimes M_i^{-1}}$ is a plane curve of degree $6$. Therefore $m$ is injective and $\dim H^0\pa{C, \omega_C^{2} \otimes M_i^{-2}}- \rk m =1$.

\emph{Step III: the intersection of the quadrics of $\mathbb{P}I_2\pa{C_\eta}$ is, set-theoretically, contained in the union of $C_\eta$ with $5$ projective lines.}

\noindent Dualizing and tensoring \eqref{eq:exseqTorelli} with $\omega_C\otimes \eta$, we get
\[
0\ra M_i \otimes \eta \ra F^*\otimes \omega_C \otimes \eta \ra   \omega_C \otimes M_i^{-1} \otimes \eta  \ra 0.
\]
By \eqref{eq:noneffect}, $h^1\pa{C, M_i\otimes\eta}=0$ and
\[
h^0\pa{C,F^*\otimes\omega_C\otimes \eta}=h^0\pa{C, \omega_C\otimes M_i^{-1} \otimes \eta}=7+1-6+h^0\pa{M_i\otimes \eta^{-1}}=2.
\]
It follows that there is a two dimensional family of maps $F \ra \omega_C\otimes \eta$.

\emph{Step IV: the intersection of the quadrics of $\mathbb{P}I_2\pa{C_\eta}$ coincides, as a scheme, with the union of $C_\eta$ with $5$ projective lines.}

\noindent We recall that
\[
\dim I_2\pa{C_\eta}=4 \qquad \text{and}\qquad \dim\mathbb{P}H^0\pa{C, \omega_C\otimes \eta}=5,
\]
thus the intersection of the quadrics is complete and it is consequently Cohen-Macaulay (\cite[Chapter II, Proposition 8.23]{hart}). It follows that it has degree $16$ (\cite[Chapter I, Theorem 7.7]{hart}) and that it has no embedded components (\cite[Chapter 6, Section 16, Theorem 30]{matsumura}). Since the degree of $C_\eta$ is $11$, this concludes the proof.
\end{proof}

\begin{rmk}
Cases \eqref{eq:itemquadr1} and \eqref{eq:itemquadr2} of Theorem \ref{theorem:quadriche} can be also deduced from \cite[Section 1, Proposition]{SD72} and \cite[Theorem 1.3]{LangeSernesi}.
\end{rmk}

\begin{cor}
\label{cor:gentorellifirst}
Let $\pa{C, \eta, R}$  be a general point of $\mathcal{R}_{g,r}$. If one of the following conditions holds
\[
r>6\text{ and }g\geq 1,\qquad r=6\text{ and }g\geq 3,\qquad r=4\text{ and }g\geq 5,\qquad r=2\text{ and }g\geq 6,
\]
the image of the differential of the Prym map at $\pa{C, \eta, R}$ determines the pair $\pa{C, \eta}$.
\begin{proof}
Lemma \ref{lemma:tecnico} proves that the statement of Theorem \ref{theorem:quadriche} is realized for a general point $\pa{C, \eta, R}$.
\end{proof}
\end{cor}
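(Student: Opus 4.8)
The plan is to exploit the dictionary, set up in Section~\ref{subsec:quadrics} just before Theorem~\ref{theorem:quadriche}, between the position of $d\prym\pa{T_{\pa{C,\eta,R}}}$ inside $T_{\prym\pa{C,\eta,R}}$ and the quadrics through the semi-canonical model. Since $d\prym^{*}$ is, up to a non-zero scalar, the multiplication map $\sym^2H^0\pa{C,\omega_C\otimes\eta}\ra H^0\pa{C,\omega_C^2\otimes\fascio{C}\pa{R}}$, its kernel $I_2\pa{C_\eta}$ is the space of homogeneous quadratic polynomials vanishing on $C_\eta\subset\mathbb{P}H^0\pa{C,\omega_C\otimes\eta}^{*}$, and by Serre duality knowing the subspace \eqref{eq:incldiff} is equivalent to knowing the linear system $\mathbb{P}I_2\pa{C_\eta}$. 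So I would first reduce the statement to the assertion that, at a general point of each of the four ranges, the base locus of $\mathbb{P}I_2\pa{C_\eta}$ determines $C_\eta$ up to projective equivalence; this suffices because $f_\eta$ is an embedding, so $C\cong C_\eta$ and $\eta\simeq f_\eta^{*}\fascio{\mathbb{P}}\pa{1}\otimes\omega_C^{-1}$.

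Next I would check that a general $\pa{C,\eta,R}$ falls into one of the three items of Theorem~\ref{theorem:quadriche}, using Lemma~\ref{lemma:tecnico} and Proposition~\ref{prop:ordinedue}. Since $r$ is even, the condition $r>6$ means $r\geq 8$, and then for $g\geq 1$ there is nothing to check: we are in case~\eqref{eq:itemquadr1}. For $r=6$ and $g\geq 4$, a general $C$ is non-hyperelliptic and $h^0\pa{\eta}=0$ by Lemma~\ref{lemma:tecnico}\eqref{item:tecnico2}, so again case~\eqref{eq:itemquadr1}; for $r=6$ and $g=3$, a general $C$ is non-hyperelliptic and $h^0\pa{\eta}=1$ by Lemma~\ref{lemma:tecnico}\eqref{item:tecnico1}, so case~\eqref{eq:itemquadr2}. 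For $r=4$ and $g\geq 5$, a general curve has $\cliff\pa{C}=\lfloor\pa{g-1}/2\rfloor>1$ and $h^0\pa{\eta\otimes\fascio{C}\pa{p}}=0$ for every $p$ by Lemma~\ref{lemma:tecnico}, so case~\eqref{eq:itemquadr1}. Finally, for $r=2$ and $g\geq 7$, a general curve has $\cliff\pa{C}\geq 3>2$ and $h^0\pa{\eta\otimes\fascio{C}\pa{p+q}}=0$ for all $p,q$ by Lemma~\ref{lemma:tecnico}\eqref{item:tecnico4}, so case~\eqref{eq:itemquadr1}, while for $r=2$ and $g=6$ the general point is exactly the setting of case~\eqref{eq:itemquadr3}.

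Then, applying Theorem~\ref{theorem:quadriche}, the base locus of $\mathbb{P}I_2\pa{C_\eta}$ is either $C_\eta$ itself, or $C_\eta$ together with one line, or $C_\eta$ together with five lines. In every range considered $C_\eta$ is a non-degenerate curve of degree $2g-2+\tfrac r2\geq 4$ in $\mathbb{P}H^0\pa{C,\omega_C\otimes\eta}^{*}$, hence it is the unique component of the base locus that is not a line; so $C_\eta$ is recovered from $\mathbb{P}I_2\pa{C_\eta}$ up to the action of $\mathrm{PGL}$ on the ambient space, and with it the pair $\pa{C,\eta}$ as explained above. Combined with the genericity statements of Lemma~\ref{lemma:tecnico}, this gives the corollary.

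The main obstacle is the two boundary cases $g=3,r=6$ and $g=6,r=2$, where $C_\eta$ is not cut out (even set-theoretically) by its own quadrics: there one must rely on the precise descriptions~\eqref{eq:itemquadr2} and \eqref{eq:itemquadr3} of Theorem~\ref{theorem:quadriche} to know that the extraneous base locus consists only of lines, so that $C_\eta$ remains recognizable. Apart from this, the work is the routine bookkeeping of Clifford indices and of the non-effectivity of small twists of a general $\eta$ with $h^0\pa{\eta^2}>0$, which is already packaged in Lemma~\ref{lemma:tecnico} and Proposition~\ref{prop:ordinedue}.
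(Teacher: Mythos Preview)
Your proposal is correct and follows exactly the route the paper intends: verify via Lemma~\ref{lemma:tecnico} that a general $\pa{C,\eta,R}$ in each of the four ranges satisfies the hypotheses of one of the items of Theorem~\ref{theorem:quadriche}, and then read off $C_\eta$ (hence the pair $\pa{C,\eta}$) from the base locus of $\mathbb{P}I_2\pa{C_\eta}$. The paper compresses all of this into a single sentence; you have simply unpacked the case analysis and made explicit the step of distinguishing $C_\eta$ from the extraneous lines in the two borderline cases, which the paper leaves to the reader.
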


\section{The generic Torelli theorem}
\label{sec:genericTorelli}

\subsection{The Prym map at the boundary}
\label{subsec:Prymmapboundary}

In the following we assume $C$ to be a general curve of genus $1<g<r$, with $r\geq 6$. Set
\begin{equation}
\label{eq:superf}
\Upsilon:=\set{\pa{\eta, R} \in \pic^{\frac{r}{2}}\pa{C} \times C_r: R\in \abs{\eta^2}},
\end{equation}
and consider the partition
\[
\Upsilon=\bigsqcup_{k=1}^r Y_k,
\]
where
\begin{equation}
\label{eq:defD_k}
Y_{k}:=\set{\pa{\eta, \sum_{i} n_iy_i} \in \Upsilon : \sum_i\pa{n_i-1}=k-1}.
\end{equation}
We remark that $\Upsilon$ is an \'etale $2^{2g}$-covering of the symmetric product $C_r$ of $C$ and, in particular, $Y_1$ is an \'etale covering of the open set of divisors with no multiple points.
The rational map
\begin{equation}
\begin{split}
\label{eq:razriv}
\mathcal{T} \colon \Upsilon &\dashrightarrow \mathcal{R}_{g,r}\\
\pa{\eta, R}&\mapsto \pa{C, \eta, R},
\end{split}
\end{equation}
is clearly defined over $Y_1$ and, for $g>2$, here it is an isomorphism. If $g=2$, the map $\mathcal{T}|_{Y_1}$ has degree two: namely, if $i\colon C \ra C$ is the hyperelliptic involution, then 
\begin{equation}
 \label{eq:gradoinv}
\mathcal{T}^{-1}\pa{C, \eta, R}=\set{\pa{\eta, R}, \pa{i^*\eta, i\pa{R}}}.
\end{equation}
Let
\begin{equation}
\label{eq:prym'}
  \prym'\colon  \Upsilon \dashrightarrow \mathcal{A}^{\delta}_{g-1+\frac{r}{2}}
\end{equation}
be the composition (regular over $Y_1$) of $\mathcal{T}$ with the Prym map. In this section we extend the rational map $\prym'$.

Let $\pa{\eta,R}$ be a point of $Y_k$ and $\varphi \colon \Delta \ra \Upsilon$ be a non constant map from the complex unit disk to $\Upsilon$ such that $\varphi\pa{0}=\pa{\eta, R}$ and $\varphi\pa{\Delta \setminus \set{0}}\subset Y_1$. By pullback, up to a finite base change, we have a map of families of curves
\begin{equation}
\label{eq:famD_kcompl}
\xymatrix{
\mathcal{D}\ar[rr]^{\Gamma} \ar[rd] && \mathcal{C} \ar[dl]\\
&\Delta &
}
\end{equation}
with the following properties:
\begin{itemize}
  \item for $t\neq 0$, the curve $\mathcal{C}_t$ is isomorphic to $C$ and $\Gamma_t \colon \mathcal{D}_t\ra \mathcal{C}_t$ is the double covering of smooth projective curves corresponding to $\pa{C, \varphi\pa{t}}\in \mathcal{R}_{g, r}$;
  \item $\mathcal{D}_0 \ra \mathcal{C}_0$ is a double admissible covering of semi-stable curves (see \cite{HarrisMumford} and \cite[Chapter 3, Section G]{HM98}).
\end{itemize}

The family of coverings in \eqref{eq:famD_kcompl} determines a family of semi-abelian varieties
\begin{equation}
\label{eq:famP}
\mathcal{P}/\Delta,
\end{equation}
where $\mathcal{P}_t$ is, for each $t\in \Delta \setminus \set{0}$, the Prym variety of the double covering $\mathcal{D}_t\ra \mathcal{C}_t$, while $\mathcal{P}_0$ is the kernel of the morphism of semi-abelian varieties 
\[
J\pa{\mathcal{D}_0}\ra J\pa{\mathcal{C}_0}.
\]

\begin{prop}
\label{prop:limP0}
Let $\varphi \colon \Delta \ra \Upsilon$ be a non constant map from the complex unit disk to $\Upsilon$ such that $\varphi\pa{0}=\pa{\eta, R}\in Y_k$ and $\varphi\pa{\Delta \setminus \set{0}}\subset Y_1$. Consider the family of semi-abelian varieties $\mathcal{P}/\Delta$ defined as in \eqref{eq:famP}.
\begin{enumerate}
\item If $k=2$, the semi-abelian variety  $\mathcal{P}_0$ has rank $1$ and its compact part is irreducible. Furthermore, it is uniquely determined by the point $\pa{\eta, R}$.
\item If $k=3$, $\mathcal{P}_0$ is either a trivial extension of rank $1$, or a product of an abelian variety and an elliptic curve, or a semi-abelian variety of rank $2$.
\item If $k>3$,  $\mathcal{P}_0$  is either a semi-abelian variety of rank $1$ with reducible compact part or a semi-abelian variety of rank greater than $1$.
\end{enumerate}
\end{prop}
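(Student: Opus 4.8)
The plan is to describe the admissible covering $\mathcal{D}_0 \to \mathcal{C}_0$ explicitly in terms of the pair $(\eta, R)$, compute the two generalized Jacobians, and then read off $\mathcal{P}_0 = \ker\bigl(J(\mathcal{D}_0) \to J(\mathcal{C}_0)\bigr)$. Write $R = \sum_i n_i y_i$ with the $y_i$ distinct and $\sum_i(n_i - 1) = k-1$, and set $\eta' := \eta\bigl(-\sum_i \lfloor n_i/2 \rfloor y_i\bigr)$ and $R' := R - 2\sum_i \lfloor n_i/2\rfloor y_i = \sum_{n_i\text{ odd}} y_i$, so that $(\eta')^2 \simeq \fascio{C}(R')$. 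Since $C$ stays constant in the family \eqref{eq:famD_kcompl}, the theory of admissible coverings (\cite{HarrisMumford}, \cite[Chapter 3, Section G]{HM98}) gives: $\mathcal{C}_0$ is $C$ with a tree of smooth rational bubbles attached at each $y_i$ with $n_i \geq 2$, carrying the $n_i$ collided branch points; over $C$ itself, $\mathcal{D}_0 \to \mathcal{C}_0$ is the smooth double covering $\pi'\colon D' \to C$ associated with $(C, \eta', R')$; and over each bubble it is a connected double covering of $\mathbb{P}^1$. A parity count pins down the latter: the covering of a bubble carrying $m$ branch points is ramified at the node joining it to the rest exactly when $m$ is odd. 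Thus a bubble with $m$ even is glued to $D'$ along the two points of $D'$ over $y_i$ (so $y_i$ is not a branch point of $\pi'$) and carries a covering of genus $m/2 - 1$ --- a rational bridge when $m = 2$; a bubble with $m$ odd is glued to $D'$ along the single point over $y_i$ and carries a covering of genus $(m-1)/2$ --- a rational tail when $m = 1$, i.e. when $y_i$ is an ordinary branch point.

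Next I would compute the two generalized Jacobians. The bubbles of $\mathcal{C}_0$ form a tree attached to $C$, so $J(\mathcal{C}_0) = J(C)$. Since $\mathcal{D}_0$ is nodal, $J(\mathcal{D}_0)$ is a semi-abelian variety whose toric rank is the first Betti number of the dual graph of $\mathcal{D}_0$ and whose abelian part is $J(D')$ times the product of the Jacobians of the bubble coverings, the $\mathbb{G}_m$-classes being of the form $\pm[p-q]$ as recalled in Section \ref{subsec: preliminaries}. The morphism $J(\mathcal{D}_0) \to J(\mathcal{C}_0) = J(C)$ kills the toric part (which must land in the trivial toric part of $J(C)$) and kills each Jacobian of a bubble covering (these map to the contracted base bubbles), while on $J(D')$ it is the norm, with connected kernel the Prym variety $P'$ of $\pi'$. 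Hence $\mathcal{P}_0$ has the same toric rank as $J(\mathcal{D}_0)$ and compact part $P' \times \prod_i J(\mathrm{bubble}_i)$; the identity $\dim \mathcal{P}_0 = \dim P' + \sum_i \dim J(\mathrm{bubble}_i) + \rk \mathcal{P}_0 = g - 1 + \frac{r}{2}$, as it must be, serves as a bookkeeping check throughout.

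It then remains to go through the strata. If $k = 2$ there is a single bubble, carrying $m = 2$ branch points --- a rational bridge --- so $\mathcal{P}_0$ has rank $1$, and its compact part is $P'$, the Prym of $(C, \eta(-y), R - 2y)$; this is connected, being the Prym of a ramified covering, it is simple for $C$ general (\cite{HodgePrym}, \cite{baseN}), and the extension class $\pm[y' - y'']$ is non-zero (it vanishes only if $D'$ is hyperelliptic with $y', y''$ conjugate, which is excluded for $C$ general), so $\mathcal{P}_0$ is irreducible in the required sense. Moreover the bubble over $y$ carries three special points --- the node and the two collided branch points --- so it, and the rigid double covering over it, are determined up to isomorphism; hence $\mathcal{D}_0 \to \mathcal{C}_0$, and therefore $\mathcal{P}_0$, depend only on $(\eta, R)$ and not on $\varphi$. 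If $k = 3$, either $R$ has two double points, giving two rational bridges and $\rk \mathcal{P}_0 = 2$; or $R$ has one triple point, in which case the bubble covering is generically an elliptic curve $E$ glued to $D'$ at one point, so $\mathcal{P}_0 = P' \times E$, while for a special $\varphi$ the four special points of the bubble collide, $E$ degenerates to a nodal cubic, and $\mathcal{P}_0 = P' \times \mathbb{G}_m$ becomes a trivial rank-$1$ extension. If $k > 3$, then $\sum_i(n_i - 1) \geq 3$: when two or more components are glued along two points the dual graph of $\mathcal{D}_0$ has first Betti number $\geq 2$ and $\rk \mathcal{P}_0 > 1$; otherwise the unique cycle comes from a single bubble, and a genus count --- using that a single double point contributes only $1$ to $\sum_i(n_i-1)$, so that when the cycle-bearing bubble is a rational bridge there must be a further bubble of odd multiplicity $\geq 3$ --- forces the compact part to contain, besides $P'$, the Jacobian of a bubble covering of genus $\geq 1$, hence to be reducible; degenerating $\varphi$ only raises the rank. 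This covers all three alternatives.

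The main obstacle will be the admissible-cover combinatorics itself: getting the parity rule at the nodes right and, for multiplicities $\geq 3$, controlling how the auxiliary components and their moduli vary with $\varphi$ through chains of sub-bubbles --- this is exactly what produces the alternatives for $k = 3$ and $k > 3$, and it must be organised so that one never lands, for $k \geq 3$, on a rank-$1$ extension with irreducible compact part. In the $k = 2$ statement the delicate points are the two sharpenings: independence of $\varphi$, which I reduce to rigidity of a $3$-pointed $\mathbb{P}^1$, and non-triviality (equivalently, irreducibility) of the limit, which I reduce to the non-vanishing of $\pm[y'-y'']$ for a general member of $Y_2$.
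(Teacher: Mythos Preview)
Your approach is essentially the paper's: describe the admissible covering $\mathcal{D}_0\to\mathcal{C}_0$ in terms of bubbles and parity at the nodes, compute the two generalized Jacobians, and read off $\mathcal{P}_0$. Two points of comparison are worth making.

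For $k=2$ you argue more than is needed. The proposition does not assert that the extension is non-trivial --- that enters only later, in Section~\ref{subsec:blowup}, and only for the \emph{general} point of $Y_2$ --- and ``irreducible compact part'' should be read in contrast with part~(3): the compact part is a single Prym variety, not a product $P_1\times A$ produced by the degeneration. The paper accordingly just identifies the compact part as the Prym $P$ of the covering with $r-2$ branch points and stops; invoking simplicity via \cite{HodgePrym} is unnecessary here (and in any case that reference gives $\ns(P)\simeq\mathbb{Z}$ only for \emph{very} general $P$, whereas the proposition is stated for every $(\eta,R)\in Y_2$). Your argument for non-triviality of $\pm[y'-y'']$ is also off: the class vanishes in $J(D')$ exactly when $y'=y''$, i.e.\ when $y$ is a branch point of $\pi'$, not under a hyperellipticity condition.

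For $k>3$ the paper does not enumerate bubble configurations. It simply observes that the component $E$ of $\mathcal{D}_0$ lying over $C$ has genus strictly less than $2g-2+\frac{r}{2}$; a dimension count then forces either toric rank $>1$ or a positive-dimensional abelian factor in the compact part besides the Prym of $E\to C$. This one-line argument is cleaner than your case split and also covers a case your analysis misses: when all $n_i$ are odd (possible for $k\geq 5$, e.g.\ $R=R'+3x+3y$) there are no bridges, the dual graph of $\mathcal{D}_0$ is a tree, and $\rk\mathcal{P}_0=0$, so your ``otherwise the unique cycle comes from a single bubble'' clause does not apply. The conclusion (reducible compact part) still holds in that case, and for the application in Section~\ref{subsec:blowup} only the rank-$1$ behaviour matters, so this is a bookkeeping slip rather than a real obstruction.
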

\begin{proof}
In the following we want to describe the possible admissible coverings $\mathcal{D}_0 \ra \mathcal{C}_0$. Roughly speaking, the covering is obtained as the limit of a smooth double covering of $C$ when two or more branch points come together. If, for example, $R=R'+kx$, then $k$ branch points on $C$ collapse on the point $x$. In order to get the stable limit of the double covering we have to attach a rational curve to $C$ in $x$. More generally, if $\pa{\eta, R}\in Y_k$, we have $\mathcal{C}_0 \simeq C  \cup F$, where $F$ is a union of (possibly singular and reducible) rational curves intersecting $C$ in the points of the support of $R$ of multiplicity greater than $1$. The curve $\mathcal{D}_0$ is isomorphic to $E\cup G$, where $E$ is a smooth double covering of $C$ and $G$ is a union of (possibly singular and reducible) curves mapping 2 to 1 on $F$ (see \cite{HarrisMumford, HM98}).

We want to describe the admissible covering $\mathcal{D}_0 \ra \mathcal{C}_0$ and the semi-abelian variety $\mathcal{P}_0$ for $k=2, 3$ and $k>3$.
\begin{enumerate}
\item If $k=2$, then $R=R'+2x$ (see Figure \ref{disegno}). The double covering $\pi\colon E \ra C$ has $r-2>0$ branch points. Since $x$ is not a branch point for this covering, $G$ intersects  $E$ in two different points $p_x$ and $q_x$ such that $\pi^{-1}\pa{x}=\{p_x, q_x\}$. We can conclude that $G\simeq \mathbb{P}^1$ and $F\simeq \mathbb{P}^1$.

    The semi-abelian variety $\mathcal{P}_0$ has rank $1$ and it is the extension of the Prym variety $P$ of $\pi$ determined by $\pm[p_x-q_x]\in \mathcal{K}^0\pa{P}$.

\item If $\pa{\eta, R}\in Y_3$, either $R=R'+2x+2y$ or $R=R'+3x$. In the first case, the double covering $\pi\colon E \ra C$ has $r-4>0$ branch points and so $g\pa{E}=2g-3+\frac{r}{2}$. It follows that $\mathcal{P}_0$ is a semi-abelian variety of rank $2$.

In the second case, $\pi \colon E \ra C$ has $r-2$ branch points and $x$ is a branch point for this covering. Therefore $G$ intersects  $E$ in the point $p_x=\pi^{-1}\pa{x}$ and, consequently, the arithmetic genus of $E$ is $1$. There are two possibilities: $E$ is either a rational nodal curve (see Figure \ref{disegno2}) or a smooth elliptic curve. Then $\mathcal{P}_0$ is either a trivial extension of the Prym variety $P$ of $\pi$ by an algebraic torus of rank $1$ or it is the product of $P$ by an elliptic curve.

\item  If $\pa{\eta, R}\in Y_k$ and $k>3$, the situation is more complicated and, as in the case $k=3$, there are different possible limits. We observe only that $E$ has genus strictly lower than $2g-2+\frac{r}{2}$. It follows that either the compact part of $\mathcal{P}_0$ is reducible or $\mathcal{P}_0$ is a semi-abelian variety of rank greater than $1$.
\end{enumerate}
\begin{figure}
\parbox{1.2in}{ \begin{center}
    \begin{picture}(200,200)
     \put(-100,-20){\includegraphics[scale=0.4]{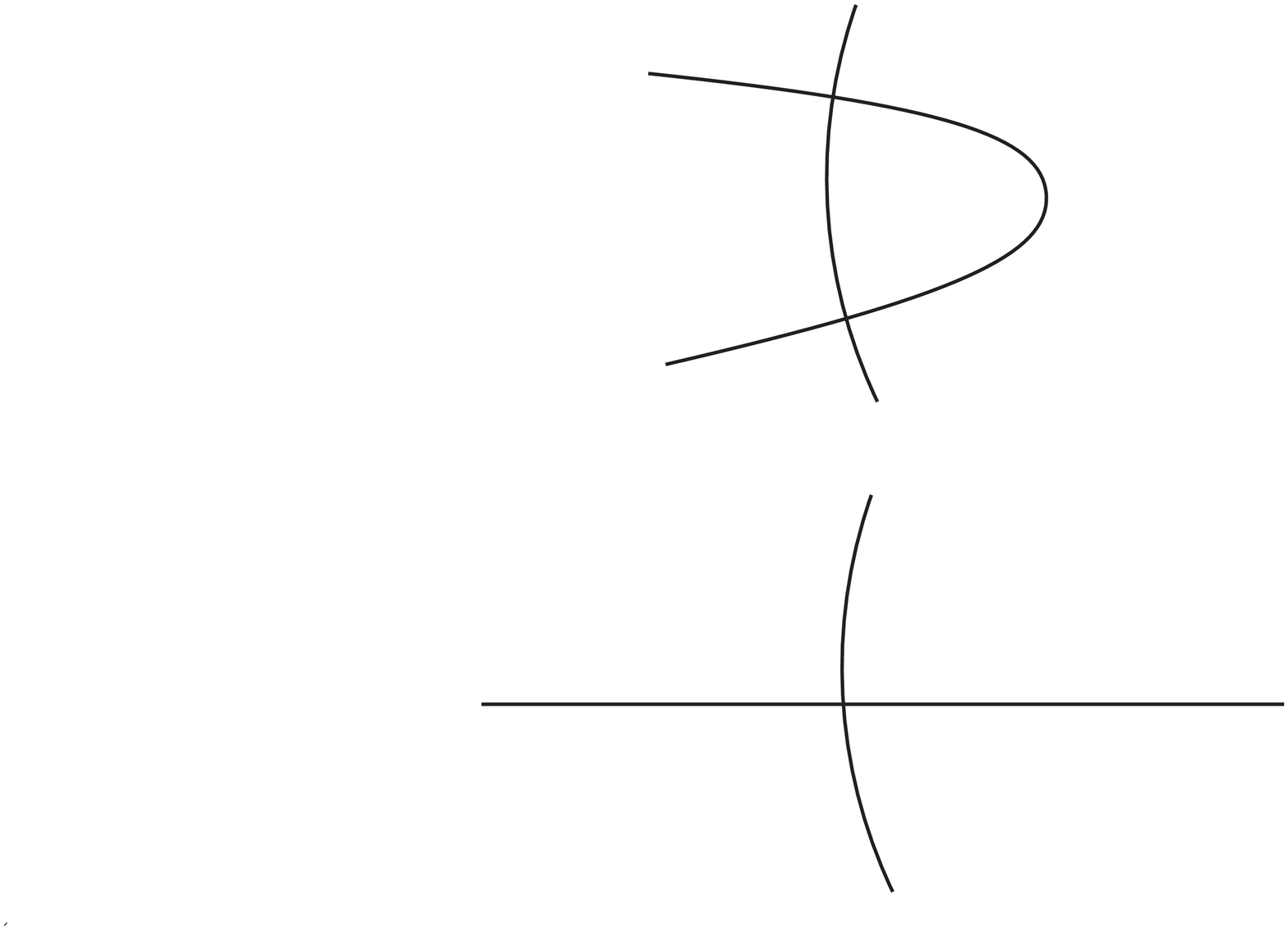}}
     \put(35,40){$C$}
     \put(60,190){$E$}
      \put(110,70){$F\simeq \mathbb{P}^1$}
     \put(105,40){$x$}
      \put(90,130){$q_x$}
      \put(90,190){$p_x$}
      \put(110,200){$G\simeq \mathbb{P}^1$}
  \put(-5,-20){\caption{Admissible double covering ($k=2$)}\label{disegno}}
    \end{picture}
   \end{center}}
\qquad \qquad \qquad
\begin{minipage}{1.2in}
 \begin{center}
    \begin{picture}(600,187)
     \put(70,0){\includegraphics[scale=0.4]{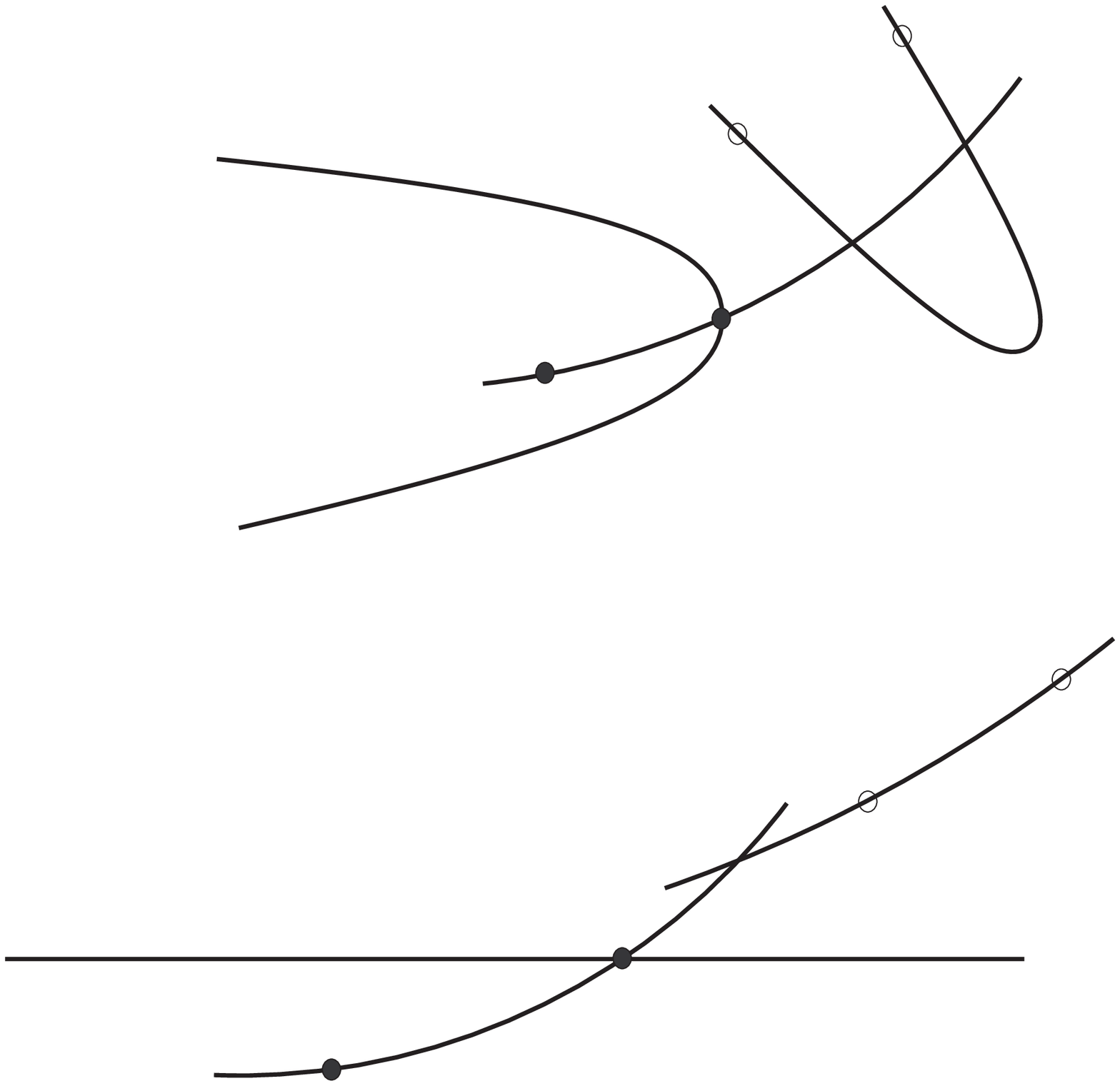}}
     \put(70,35){$C$}
     \put(100,110){$E$}
      \put(155,140){$\mathbb{P}^1$}
      \put(195,190){$\mathbb{P}^1$}
      \put(120,10){$\mathbb{P}^1$}
       \put(190,50){$F$}
       \put(180,30){$x$}
       \put(230,140){$G$}
       \put(210,140){$p_x$}
      \put(280,90){$\mathbb{P}^1$}
  \put(70,-27){\caption{Admissible double covering ($k=3$)}}\label{disegno2}
    \end{picture}
   \end{center}
\end{minipage}
\end{figure}
\end{proof}

Proposition \ref{prop:limP0} suggests us that, in order to extend $\prym'$ (see \eqref{eq:prym'}), we need to compactify the moduli space of abelian varieties $\mathcal{A}_{g-1+\frac{r}{2}}^\delta$. We consider the normalized blowing up of the Satake compactification of $\mathcal{A}_{g-1+\frac{r}{2}}^\delta$ and we denote it by $\bar{\mathcal{A}}_{g-1+\frac{r}{2}}^\delta$ (see \cite{NamikawaI,NamikawaII, NamikawaLibro, Grush}). It is a projective variety and its boundary points parametrize polarized semi-abelian varieties.

From Proposition \ref{prop:limP0} and its proof we have the following corollary.

\begin{cor}
\label{cor:estmappa}
It is possible to extend $\prym'$ to a rational map
\begin{equation*}
\mathcal{S}\colon \Upsilon \dashrightarrow  \bar{\mathcal{A}}_{g-1+\frac{r}{2}}^\delta,
\end{equation*}
such that:
\begin{enumerate}
\item the indeterminacy locus of $\mathcal{S}$ is contained in $\bigsqcup_{k\geq 3}Y_k$;
\item $\mathcal{S}\pa{\eta, R}= \prym \pa{C, \eta, R}$ for $\pa{\eta, R}\in Y_1$;
\item \label{item:estMappa3} given $z=\pa{\eta'\otimes \fascio{C}\pa{x}, R'+2x}$, let $\pi \colon E \ra C$ be the double covering associated to $\pa{C, \eta', R'}$. Then $\mathcal{S}\pa{z}$ is described by the following data:
\begin{itemize}
 \item the compact Prym variety $P$ of $\pi$;
 \item the class $\pm[p_x-q_x] \in \mathcal{K}\pa{P}$, where $\pi^{-1}\pa{x}=\set{p_x,q_x}$.
\end{itemize}
\end{enumerate}
\end{cor}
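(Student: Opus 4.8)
The plan is to extract $\mathcal{S}$ directly from the analysis already carried out in Proposition \ref{prop:limP0} and its proof, the only real content being to check that the prescription is well-defined, i.e. independent of the one-parameter degeneration $\varphi$ chosen to reach a boundary point. First I would recall that over $Y_1$ the map $\prym'$ of \eqref{eq:prym'} is already a morphism, so there is nothing to do away from $\bigsqcup_{k\geq 2}Y_k$, and by Proposition \ref{prop:limP0}(1) every point of $Y_2$ has a limit Prym that is a rank-$1$ semi-abelian variety with irreducible compact part, hence lies in the boundary of $\bar{\mathcal{A}}_{g-1+\frac{r}{2}}^\delta$ (which parametrizes polarized semi-abelian varieties by construction of the normalized blow-up of the Satake compactification). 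So the extension is defined at least on $Y_1\cup Y_2$, and its indeterminacy locus is contained in $\bigsqcup_{k\geq 3}Y_k$, giving assertion (1).

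Next I would pin down the value on $Y_2$ and prove well-definedness, which is assertion (3) together with the ``uniquely determined'' clause of Proposition \ref{prop:limP0}(1). Given $z=\pa{\eta'\otimes\fascio{C}(x),\,R'+2x}\in Y_2$, the proof of Proposition \ref{prop:limP0} shows that for \emph{any} admissible smoothing the limit curve is $\mathcal{D}_0=E\cup G$ with $G\simeq\mathbb{P}^1$ meeting the smooth double cover $\pi\colon E\ra C$ (the cover attached to $\pa{C,\eta',R'}$) in the two points $p_x,q_x$ over $x$, and $J(\mathcal{D}_0)$ is the rank-$1$ extension of $JE$ by $\mathbb{G}_m$ classified by $\pm[p_x-q_x]\in\mathcal{K}^0(JE)$. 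Taking kernels of $J(\mathcal{D}_0)\ra J(\mathcal{C}_0)$, and using that $\mathcal{C}_0=C\cup F$ with $F\simeq\mathbb{P}^1$ contributes nothing to the generalized Jacobian, one gets that $\mathcal{P}_0$ is the extension of the Prym variety $P$ of $\pi$ by $\mathbb{G}_m$ determined by $\pm[p_x-q_x]\in\mathcal{K}^0(P)$ (the image of the earlier class under $JE\twoheadrightarrow$ the relevant factor, or equivalently under $\sigma$ composed with the inclusion $P\hookrightarrow JE$). Since $\pi$, and hence $P$, $p_x$, $q_x$, depend only on $z$ and not on the path $\varphi$, the boundary point $\mathcal{S}(z)$ is well-defined; this is exactly the data listed in (3). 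Assertion (2) is immediate from the construction.

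The main obstacle is the well-definedness / path-independence issue: a priori the limiting polarized semi-abelian variety could depend on the choice of smoothing family \eqref{eq:famD_kcompl}, and one must argue it does not. The key point is that the admissible-cover construction of Harris--Mumford produces, over the whole of $Y_1\cup Y_2$, an actual family of admissible coverings in a neighbourhood of $z$ — the stable-reduction limit of the double cover is canonical — so the family of semi-abelian Pryms $\mathcal{P}/\Delta$ of \eqref{eq:famP} is the restriction of a family over a (punctured-disk-)neighbourhood of $z$ in $Y_1\cup Y_2$, and the classifying map to $\bar{\mathcal{A}}_{g-1+\frac{r}{2}}^\delta$ (which is separated) sends $z$ to a uniquely determined point. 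Concretely one observes that both the compact part $P$ and the extension class $\pm[p_x-q_x]$ vary continuously (indeed algebraically) with $z\in Y_2$: $P$ is the Prym of the nearby family of double covers of $C$ branched along $R'+(\text{two points near }x)$ with the two points brought together, and $[p_x-q_x]$ is the limit in $\mathcal{K}^0$ of the class $[p_t-q_t]$ of the two preimages, which tends to the class of the preimages of $x$. I would spell this out using the description of $J$ of a one-nodal curve recalled in item~(6) of Section~\ref{subsec: preliminaries}, together with the compatibility of the Prym construction with the norm map in families. Everything else — computing ranks, identifying $F$ and $G$ with $\mathbb{P}^1$, checking the compact part is $P$ — has already been done in the proof of Proposition \ref{prop:limP0}.
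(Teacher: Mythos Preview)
Your proposal is correct and matches the paper's treatment: the paper gives no separate proof of this corollary, stating only that it follows ``from Proposition~\ref{prop:limP0} and its proof,'' and your argument is precisely the natural unpacking of that claim. The well-definedness issue you flag as the main obstacle is exactly what the clause ``uniquely determined by the point $(\eta,R)$'' in Proposition~\ref{prop:limP0}(1) is asserting, and your justification via canonicity of the admissible limit is the intended one.
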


The next section is devoted to proving the following proposition. This is the last step in completing the proof of Theorem \ref{theorem: globaltorelli}.

\begin{prop}
\label{prop: torelliQ}
Let $C$ be a general curve of genus $1<g<r$ and assume $r\geq 6$.
The rational map
\[
\mathcal{S} \colon \Upsilon \dashrightarrow  \bar{\mathcal{A}}_{g-1+\frac{r}{2}}^\delta,
\]
has generically degree one for $g>2$ and degree two for $g=2$.
\end{prop}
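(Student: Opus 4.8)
The strategy is a degeneration argument combined with the infinitesimal results already established. Since $C$ is fixed and general, I want to show that the generic fibre of $\mathcal{S}$ consists of a single point (when $g>2$; two points when $g=2$, accounted for by the hyperelliptic involution as in \eqref{eq:gradoinv}). The key point is that $\prym'$ is a rational map between varieties of the \emph{same} dimension: indeed $\dim\Upsilon=\dim C_r=r$, while for $1<g<r$, the dimension count shows $\dim \mathcal{R}_{g,r}-\dim\mathcal{A}^\delta_{g-1+r/2}=3g-3+r-\binom{g+r/2}{2}$, but what matters here is that for fixed $C$ the image of $\prym'$ and the source $\Upsilon$ have equal dimension, so $\mathcal{S}$ is generically finite. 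Hence it makes sense to speak of its degree, and by Corollary \ref{cor:estmappa} together with the properness of $\bar{\mathcal{A}}^\delta_{g-1+r/2}$, I may (after the blow-up of Section \ref{subsec:blowup}) assume $\mathcal{S}$ is a proper morphism and count points in a general fibre.

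First I would work on the boundary component $Y_2$, which by Proposition \ref{prop:limP0}(1) and Corollary \ref{cor:estmappa}\eqref{item:estMappa3} maps to genuine rank-one semi-abelian varieties whose compact part is the Prym variety $P$ of an $(r-2)$-point covering $\pi\colon E\to C$ and whose extension datum is the class $\pm[p_x-q_x]\in\mathcal{K}^0(P)$ with $\{p_x,q_x\}=\pi^{-1}(x)$. The plan is to reconstruct, from such a boundary point, the triple $(C,\eta',R')$ and the point $x$: the compact part together with the extension class recovers, by induction on $r$ (the base case being handled by Corollary \ref{cor:gentorellifirst}, since $r-2\geq 4$ eventually, or directly by the $g\geq r$ cases), the covering $\pi$, hence $(\eta',R')$, and then the class $\pm[p_x-q_x]$ pins down the pair $\{p_x,q_x\}$ on the Abel–Prym curve $E'\subset P$ — here I would use that the Abel–Prym map is birately onto its image (item \eqref{item:AbelPrymPrelim} of Section \ref{subsec: preliminaries}), so that the unordered pair of points on $E'$ determines $x=\pi(p_x)\in C$. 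This shows that $\mathcal{S}$ restricted to (the closure of) $Y_1$, read off along $Y_2$, is generically injective for $g>2$. The degree-two phenomenon for $g=2$ comes in exactly as in \eqref{eq:gradoinv}: the hyperelliptic involution $i$ identifies $(\eta,R)$ with $(i^*\eta,i(R))$, and correspondingly $(\eta'\otimes\mathcal{O}_C(x),R'+2x)$ with its image, while fixing the Prym datum.

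The main technical device is a lemma on the cardinality of the general fibre of a proper morphism (promised in Section \ref{subsec:blowup}): if $\mathcal{S}\colon \Upsilon\to\bar{\mathcal{A}}$ is proper and generically finite, and if there is a boundary divisor $Z\subset\Upsilon$ such that $\mathcal{S}|_Z$ has generic degree $d$ onto its image and the image $\mathcal{S}(Z)$ is not contained in the image of any other component of the indeterminacy-resolved locus, then $\deg\mathcal{S}=d$. The point is that a general point of $\mathcal{S}(Z)$ has exactly $d$ preimages, all on $Z$, and by upper semicontinuity of fibre cardinality for proper maps this bounds the generic degree; combined with $\deg\mathcal{S}\geq d$ (which is clear once $\mathcal{S}|_{Y_1}$ is dominant onto its image of the right dimension) one gets equality. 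Applying this with $Z=\overline{Y_2}$ and $d=1$ (resp.\ $d=2$ for $g=2$) yields the proposition.

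The hard part will be the boundary analysis: I must verify that $\overline{Y_2}$ is indeed a divisor whose generic point lies outside the indeterminacy locus of $\mathcal{S}$ after blow-up (Corollary \ref{cor:estmappa}(1) puts the indeterminacy in $\bigsqcup_{k\geq 3}Y_k$, which is good), that the restricted map $\mathcal{S}|_{\overline{Y_2}}$ really does separate points — this requires knowing the infinitesimal Torelli theorem holds at the boundary, so that the reconstruction of $\pi$ from $(P,\pm[p_x-q_x])$ is legitimate generically — and, most delicately, that the images $\mathcal{S}(Y_k)$ for $k\geq 3$ do not swallow up a dense subset of $\mathcal{S}(\overline{Y_2})$, i.e.\ that a general rank-one semi-abelian limit of the required shape is not also a limit coming from deeper strata in a way that would create extra preimages. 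Controlling these incidence relations among boundary strata of $\bar{\mathcal{A}}^\delta_{g-1+r/2}$, and checking the Abel–Prym curve is not hyperelliptic so that the pair $\{p_x,q_x\}$ is genuinely recovered, is where the real work lies.
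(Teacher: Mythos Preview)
Your overall architecture matches the paper's exactly: resolve $\mathcal{S}$ to a proper morphism, invoke an abstract lemma bounding the generic degree by the degree over a codimension-one boundary stratum, take that stratum to be $Y_2$, and verify three conditions (the images of $Y_2$ and its complement meet in codimension $\geq 2$; the differential is injective along $Y_2$; $\mathcal{S}|_{Y_2}$ has the expected degree). You also correctly identify the source of the factor two when $g=2$ via \eqref{eq:gradoinv}.

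There is, however, a genuine gap in how you establish the third condition. You propose to recover $(\eta',R')$ from the compact part $P$ alone, by induction on $r$ with base case handled by Corollary~\ref{cor:gentorellifirst}. But the base does not close up: for $r=6$ and $g\in\{2,3,4\}$ the inductive step would require generic injectivity of the Prym map for $r-2=4$ branch points, and Corollary~\ref{cor:gentorellifirst} only covers $r=4$ when $g\geq 5$; the ``$g\geq r$ cases'' do not help either, since $g<r$ throughout. More fundamentally, you are asking for more than you need: there may indeed be several $(\eta^k,R^k)$ with $\prym(C,\eta^k,R^k)=P$, and you do not have to rule this out.

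The paper avoids induction altogether. It uses only generic \emph{finiteness} of the Prym map for $r-2$ branch points (Corollary~\ref{cor:Prymfinite}) to list finitely many candidate coverings $\pi^k\colon D^k\to C$ with common Prym $P$, and then invokes a direct geometric fact (Lemma~\ref{lemma:nuovoPrym}): the image of the Abel--Prym curve in $\mathcal{K}(P)$ determines the covering. Hence for $i\neq j$ the curves $\mathcal{S}(W^i)$ and $\mathcal{S}(W^j)$ meet only in finitely many points, while on each $W^i$ the map is injective because the Abel--Prym map is birational (so $\pm[p_x-q_x]$ recovers $x$). In other words, it is the extension class, not the compact part alone, that separates the finitely many $\pi^k$. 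Finally, your version of the degree lemma (``upper semicontinuity of fibre cardinality'') is too loose; the paper's Lemma~\ref{lemma:astratto} needs to pass to the normalization of the target and use flatness to make that step rigorous.
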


\subsection{Proof of Proposition \ref{prop: torelliQ}}
\label{subsec:blowup}
We want to apply to $\mathcal{S}$ an abstract lemma on the degree of a proper morphism. 

\begin{lemma}
\label{lemma:astratto}
Let $f\colon X \ra Z$ be a generically finite, surjective, proper morphism of varieties over an algebraically closed field $\Bbbk$. Consider an integral, locally closed subset $Y$ of $X$ of codimension $1$, that is not contained in the singular locus of $X$, and set $H:=f\pa{Y}\cap f\pa{Y^c}$. Assume that:
\begin{enumerate}
  \item \label{item:lemmaastratto1} the codimension of the closure $\bar H$ of $H$ in $Z$ is at least $2$;
  \item \label{item:lemmaastratto2} the differential $df$ is injective in a non-empty open set of $Y$;
  \item \label{item:lemmaastratto3} there is a non-empty open set $V$ of $f\pa{Y}$ such that $f^{-1}\pa{y}$ has cardinality $n$ for each $y\in V$.
\end{enumerate}
Then there is a non-empty  open set $U$ of $Z$ such that $f^{-1}\pa{y}$ has cardinality $m\leq n$ for each $y\in U$.
\begin{proof}
We can prove the statement when $Y$ is a closed subset of $X$. Namely, given $Y$ locally closed, conditions \eqref{item:lemmaastratto2} and \eqref{item:lemmaastratto3} clearly also hold for $\bar Y$. In order to prove  \eqref{item:lemmaastratto1}, notice that
\[
f\pa{\bar Y}\cap f\pa{\bar Y^c}\subset H \cup \pa{f\pa{\bar Y \setminus Y}\cap f\pa{Y^c}}\subset H\cup f\pa{\bar Y \setminus Y}.
\]
It follows that
\[
\codim\pa{\ol{f\pa{\bar Y}\cap f\pa{\bar Y^c}}}\geq \min \set{\codim \bar H,\codim f\pa{\bar Y \setminus Y}}\geq 2.
\]

Observe that, up to restriction, we can assume $X$ to be smooth. We claim that we can also assume $f$ to be a finite morphism. Let $W\subset Z$ be the maximal open subset of $Z$ such that $f^{-1}\pa{W}\ra W$ is a proper quasi-finite morphism. By \cite[Theorem 8.11.1]{EGAIV}, $f|_{f^{-1}\pa{W}}$ is a finite morphism. We show that $f\pa{Y}$ is not contained in the complement of $W$ and so, by \eqref{item:lemmaastratto1}, $Y \cap f^{-1}\pa{W}\neq \emptyset$.
This implies that all the hypotheses still hold when we replace $X$ with $f^{-1}\pa{W}$, $Z$ with $W$ and $Y$ with $Y\cap f^{-1}\pa{W}$. Assume, by contradiction that $f\pa{Y}\subset W^c$; then, by \eqref{item:lemmaastratto2}, $f\pa{Y}$ is an irreducible component of $W^c$. Let us consider the union of the closure of $W^c\setminus f\pa{Y}$ and $f\pa{Y} \setminus V$. The complement of this set is open and it has the same property of $W$, thus we get a contradiction.

Let $\tilde Z$ be the normalization of $Z$. By the universal property of normalization we can factorize $f$ as
\[
X \xrightarrow{\ g \ } \tilde Z \xrightarrow{\ \pi \ } Z,
\]
where $\pi$ and $g$ are finite morphisms.
Furthermore there is an open set $T$ of $Z$ such that $T^c$ has codimension at least two and $\tilde T:=\pi^{-1}\pa{T}$ is smooth. This implies that
\[
g \colon g^{-1}\pa{\tilde T} \ra \tilde T
\]
is a finite flat morphism. We recall that $f\pa{Y}$ has codimension $1$ in $Z$, then $Y':=Y\cap g^{-1}\pa{\tilde T}$ is a non-empty set. By \eqref{item:lemmaastratto2}, $Y'$ is not contained in the ramification locus of $g$ and, by \eqref{item:lemmaastratto1}, we can assume that $g^{-1}\pa{g\pa{Y'}}=Y'$. By \eqref{item:lemmaastratto3}, we can conclude that the degree of $g$ is at most $n$. Since $\pi$ is birational, the statement is proved.
\end{proof}
\end{lemma}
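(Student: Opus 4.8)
The plan is to compute the degree $m$ of $f$ — the cardinality of a general fibre over $Z$ — by reducing to a \emph{finite flat} situation, in which all fibres have the same length, and then to read off the bound $m\leq n$ from a single fibre lying over the divisor $f\pa{Y}$. The guiding observation is that $f\pa{Y}$ has codimension $1$ (by \eqref{item:lemmaastratto2} the map $f|_Y$ is generically unramified, hence $\dim f\pa{Y}=\dim Y=\dim X-1$), and that over a general point of it the fibre of $f$ already contains $n$ points concentrated in $Y$; turning ``contains'' into a length count is what will produce the inequality.

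First I would remove the hypothesis that $Y$ is only locally closed by passing to its closure $\bar Y$. Conditions \eqref{item:lemmaastratto2} and \eqref{item:lemmaastratto3} survive on a dense open subset, and \eqref{item:lemmaastratto1} persists because $f\pa{\bar Y}\cap f\pa{\bar Y^c}\subseteq H\cup f\pa{\bar Y\setminus Y}$, where the second piece has codimension at least $2$ since $\bar Y\setminus Y$ does and $f$ is generically finite. Next I would reduce to $f$ finite: since $f$ is proper, over the maximal open $W\subseteq Z$ on which $f$ is quasi-finite the restriction $f^{-1}\pa{W}\ra W$ is finite (a proper quasi-finite morphism is finite). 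The one thing to check is that $Y$ meets $f^{-1}\pa{W}$; this follows because $f\pa{Y}$ has codimension $1$ while $W^c$ can be arranged to have codimension at least $2$, using \eqref{item:lemmaastratto1} together with \eqref{item:lemmaastratto2} (the latter forces $f\pa{Y}$ to be a genuine irreducible component of the degeneracy locus of $f$). All three hypotheses are inherited by the restriction.

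With $f$ finite, I would pass through the normalization $\tilde Z$ of $Z$ and factor $f=\pi\circ g$, with $g\colon X\ra\tilde Z$ and $\pi\colon\tilde Z\ra Z$ both finite and $\pi$ birational, so that $\deg f=\deg g=m$. After deleting from $Z$ a closed set of codimension at least $2$ — call $T$ the resulting open set and $\tilde T:=\pi^{-1}\pa{T}$ — I may assume $\tilde T$ smooth, and after shrinking further I may assume $X$ smooth as well (here I use that $Y$ is not contained in the singular locus of $X$). Then $g$ is a finite morphism from a smooth (in particular Cohen--Macaulay) variety to a smooth variety of the same dimension, hence flat by miracle flatness; consequently every fibre of $g$ has length exactly $m$. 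Set $Y':=Y\cap g^{-1}\pa{\tilde T}$, which is non-empty because $f\pa{Y}$ has codimension $1$; the divisor $g\pa{Y'}\subseteq\tilde Z$ is the object I will test against.

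The heart of the argument is the fibre of $g$ over a general point $w^{\ast}$ of $g\pa{Y'}$. By \eqref{item:lemmaastratto2}, $Y'$ avoids the ramification locus of $g$, so the points of $g^{-1}\pa{w^{\ast}}$ lying in $Y'$ are unramified, hence reduced. By \eqref{item:lemmaastratto1} I can arrange $g^{-1}\pa{g\pa{Y'}}=Y'$ over a dense open subset: if some $x\in g^{-1}\pa{w^{\ast}}$ lay outside $Y'$, then $\pi\pa{w^{\ast}}$ would lie in both $f\pa{Y}$ and $f\pa{Y^c}$, hence in $H$, and the locus of such $w^{\ast}$ maps into $\bar H$, of codimension at least $2$, so it meets the codimension-$1$ set $g\pa{Y'}$ only in a proper subset. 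Thus for general $w^{\ast}$ the entire fibre $g^{-1}\pa{w^{\ast}}$ consists of exactly $m$ distinct reduced points, all in $Y'$; since $g^{-1}\pa{w^{\ast}}\subseteq f^{-1}\pa{\pi\pa{w^{\ast}}}$ and the right-hand fibre has cardinality $n$ for $\pi\pa{w^{\ast}}\in V$ by \eqref{item:lemmaastratto3}, I conclude $m\leq n$. The step I expect to be the main obstacle is precisely this reduction to a finite flat morphism: guaranteeing that miracle flatness applies and that all three hypotheses can be preserved simultaneously while excising the successive codimension-$2$ bad loci. Once flatness is secured, the length count converts a single distinguished fibre into the required inequality, and the birationality of $\pi$ transfers it back to $f$.
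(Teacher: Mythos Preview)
Your proposal is correct and follows essentially the same route as the paper's proof: reduce to $Y$ closed, restrict so that $X$ is smooth and $f$ is finite, factor through the normalization of $Z$, invoke miracle flatness to make $g$ finite flat, and read off the degree from a general fibre over $g\pa{Y'}$ using conditions \eqref{item:lemmaastratto1}--\eqref{item:lemmaastratto3} exactly as you describe. The only place where you are slightly less precise than the paper is the reduction to a finite morphism: the paper's argument that $f\pa{Y}\not\subset W^c$ uses condition \eqref{item:lemmaastratto3} (the finiteness of fibres over $V$) together with \eqref{item:lemmaastratto2} and the maximality of $W$, rather than arranging $W^c$ itself to have codimension at least $2$.
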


Solving the indeterminacy locus (by a suitable blow up) of the rational map
\[
\mathcal{S}\colon \Upsilon \dashrightarrow  \bar{\mathcal{A}}_{g-1+\frac{r}{2}}^\delta,
\]
we obtain a proper map
\[
\mathcal{Q}\colon \mathcal{B}\pa{\Upsilon} \ra \mathcal{Q}\pa{\mathcal{B}\pa{\Upsilon}}\subset \bar{\mathcal{A}}_{g-1+\frac{r}{2}}^\delta
\]
where $\mathcal{B}\pa{\Upsilon}$ has a projection
\[
p \colon \mathcal{B}\pa{\Upsilon} \ra \Upsilon.
\]
Set $X:=\mathcal{B}\pa{\Upsilon}$, $Z:=\mathcal{Q}\pa{\mathcal{B}\pa{\Upsilon}}$, $f:=\mathcal{Q}$ and $Y:=p^{-1}\pa{Y_2}$, where we recall that
\begin{equation*}
Y_{2}:=\set{\pa{\eta, \sum_{i} y_i+ 2x} \in \Upsilon}.
\end{equation*}
By Corollary \ref{cor:Prymfinite} the morphism $\mathcal{Q}$ is generically finite. Moreover, when $g=2$, then $\deg \mathcal{Q}\geq 2$ (see \eqref{eq:gradoinv}). Proposition \ref{prop: torelliQ} will follow from a direct application of Lemma \ref{lemma:astratto}, once we have shown that conditions \eqref{item:lemmaastratto1}, \eqref{item:lemmaastratto2}, \eqref{item:lemmaastratto3} are fulfilled (with $n=1$ when $g>2$ and with $n=2$ when $g=2$).

By Proposition \ref{prop:limP0}, the points in $\mathcal{Q}\pa{Y}\cap \mathcal{Q}\pa{Y^c}$ parametrize semi-abelian varieties of rank $1$ that are trivial extensions. On the other hand, by Corollary \ref{cor:estmappa}, to the general point of $\ol{\mathcal{Q}\pa{Y}}$ is associated an irreducible semi-abelian variety of rank $1$. Therefore $\ol{\mathcal{Q}\pa{Y}\cap \mathcal{Q}\pa{Y^c}}$ is a proper closed subset of $\ol{\mathcal{Q}\pa{Y}}$ and condition \eqref{item:lemmaastratto1} is satisfied. Statements \eqref{item:lemmaastratto2} and \eqref{item:lemmaastratto3} will follow respectively from Lemma \ref{lemma:diff} and Lemma \ref{lemma:genin} (Lemma \ref{lemma:genin2} if $g=2$) below. We recall that, by Corollary \ref{cor:estmappa}, the indeterminacy locus of $\mathcal{S}$ is contained in $\bigsqcup_{k\geq 3}Y_k$. Therefore $\mathcal{Q}|_Y$ coincides with $\mathcal{S}|_{Y_2}$.

\bigskip

In the following lemma we adapt to our case the proof of the infinitesimal Torelli theorem for curves at the boundary (see for instance \cite{usui}).

\begin{lemma}
\label{lemma:diff}
The differential of the map $\mathcal{S}$ at a general point $z\in Y_2$ is injective.
\end{lemma}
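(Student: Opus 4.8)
The plan is to mimic the proof of the infinitesimal Torelli theorem for a curve degenerating to a one-nodal curve, transported to the Prym setting via Corollary~\ref{cor:estmappa}. First I would describe the target: by item~\eqref{item:estMappa3} of Corollary~\ref{cor:estmappa}, the boundary point $\mathcal{S}(z)$ with $z=(\eta'\otimes\fascio{C}(x), R'+2x)\in Y_2$ is the pair consisting of the compact Prym variety $P$ of the double covering $\pi\colon E\ra C$ associated to $(C,\eta',R')$ together with the class $\pm[p_x-q_x]\in\mathcal{K}(P)$, where $\pi^{-1}(x)=\set{p_x,q_x}$. Hence the tangent space to $\bar{\mathcal{A}}^\delta_{g-1+\frac{r}{2}}$ at this boundary point splits, up to the relevant identifications, as the tangent space to the space of rank-one semi-abelian varieties: a ``compact'' part recording the first-order deformation of $P$ (a quotient of $\sym^2 H^1(E,\fascio{E})^-$, the anti-invariant part), plus a ``torus'' part recording the first-order motion of the extension class, which is governed by the derivative of the Abel--Prym map of $\pi$ at the point $p_x-q_x$, i.e. by the cotangent direction $H^0(E,\omega_E)^-$ paired against the tangent of the Abel--Prym curve $D'\subset P$ at that point.

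Next I would identify the source. The tangent space $T_z Y_2$ parametrizes first-order deformations of the data $(C, \eta'\otimes\fascio{C}(x), R'+2x)$ keeping the divisor of the form (something)$+2x$; equivalently it is the tangent space to the space of triples $(C,\eta',R',x)$, which surjects onto $H^1(C,T_C(-R'))\oplus(\text{motion of }\eta')\oplus(\text{motion of }x)$. The strategy is to compute $d\mathcal{S}$ on each of these factors and show the total map is injective. Moving $(C,\eta',R')$ while fixing $x$ deforms $\pi\colon E\ra C$ and hence $P$; injectivity on this locus is exactly the statement that the Prym map $\prym\colon \mathcal{R}_{g,r-2}\ra\mathcal{A}^\delta_{g-2+\frac{r}{2}}$ has injective differential at $(C,\eta',R')$, which is precisely Proposition~\ref{prop:proplocaltorelli} (note $r-2\geq 4$, so we are in case (1) or (2) for the appropriate range, and Lemma~\ref{lemma:tecnico} guarantees the hypotheses on $\eta'$ hold for $C$ general). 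This controls the compact part. Moving the point $x$ on $C$ (with $(C,\eta',R')$ fixed) leaves $P$ unchanged but moves the pair of points $\set{p_x,q_x}$ on $E$, hence moves the extension class along the Abel--Prym curve; since the Abel--Prym map of $\pi$ is an embedding (as $E$ is not hyperelliptic for general $C$ — $E$ has genus $2g-3+\frac{r}{2}\geq\ldots$ large), this contributes a nonzero vector in the torus direction, and distinct points $x$ give distinct directions.

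The remaining and most delicate point — which I expect to be the main obstacle — is to check that these contributions do not interfere: that the compact-part differential and the torus-part differential together form an injective map, i.e. that a nonzero tangent vector cannot be killed by a cancellation between the deformation of $P$ and the deformation of the extension class. The clean way to handle this is to observe that the projection $\bar{\mathcal{A}}^\delta\dashrightarrow\mathcal{A}^\delta_{g-2+\frac{r}{2}}$ sending a rank-one semi-abelian variety to its compact part is regular near our boundary point, and its composition with $\mathcal{S}$ restricted to the sublocus $R'+2x$ with $x$ fixed is (up to the étale map $\mathcal{T}$) the Prym map in dimension $g-2+\frac{r}{2}$, which has injective differential by Proposition~\ref{prop:proplocaltorelli}; this already shows $\ker d\mathcal{S}$ can only contain vectors tangent to the fibre direction, i.e. vectors that fix $(C,\eta',R')$ to first order and only move $x$. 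On such vectors $d\mathcal{S}$ reduces to the derivative of the Abel--Prym map $a\colon E\ra P$ composed with $x\mapsto p_x-q_x$, which is injective because $E$ is non-hyperelliptic (so $a$ is an immersion) and $x\mapsto\set{p_x,q_x}$ has nonzero derivative away from the finitely many branch points of $\pi$. Combining the two, $d\mathcal{S}_z$ is injective on an open dense subset of $Y_2$, which is the claim. Along the way one must be careful to use the blow-up coordinates on $\bar{\mathcal{A}}^\delta$ near the boundary (Namikawa) so that ``derivative into the boundary'' literally means the derivative of the extension class as an element of $\pic^0(P)$ pulled back to $\mathcal{K}^0(P)$; I would cite \cite{usui} for this local description, exactly as the paper signals.
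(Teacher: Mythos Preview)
Your treatment of the directions tangent to $Y_2$ matches the paper's: filter by ``move $x$'' (torus direction, controlled by the Abel--Prym immersion) versus ``move $(\eta',R')$'' (compact direction, controlled by Proposition~\ref{prop:proplocaltorelli} for $\mathcal{R}_{g,r-2}$). But there is a genuine gap: the lemma demands injectivity of $d\mathcal{S}$ on the full $T_z\Upsilon$, not just on $T_z Y_2$. Since $Y_2$ is a divisor in $\Upsilon$, you are missing the one-dimensional normal direction, which corresponds to separating the double point $2x$ into two simple points --- that is, to moving off $Y_2$ into $Y_1$. Your decomposition exhausts only $T_z Y_2$, and nothing in the outline treats a vector $v\in T_z\Upsilon\setminus T_z Y_2$; yet this is precisely what hypothesis~\eqref{item:lemmaastratto2} of Lemma~\ref{lemma:astratto} requires, since there $X=\mathcal{B}(\Upsilon)$, not $Y$. (As a minor aside: $C$ is fixed throughout Section~\ref{subsec:Prymmapboundary}, so there is no $H^1(C,T_C(-R'))$ summand in the source; what you invoke is really the restriction of the Prym differential to the fibre over $[C]$, which is of course still injective.)

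The paper handles the normal direction by a separate argument. Given $v\in T_z\Upsilon\setminus T_z Y_2$, choose an arc $\varphi\colon\Delta\to\Upsilon$ tangent to $v$ and form the associated family $\mathcal{D}_t\to\mathcal{C}_t$ as in \eqref{eq:famD_kcompl}. Since $J(\mathcal{D}_t)$ is isogenous to $J(C)\times\mathcal{P}_t$ for $t\neq 0$ with $J(C)$ constant, it suffices that the Torelli map $t\mapsto J(\mathcal{D}_t)$ have nonzero derivative at $0$. Because $v\notin T_z Y_2$, the arc genuinely smooths the node of $\mathcal{D}_0$, so $t\mapsto[\mathcal{D}_t]\in\bar{\mathcal{M}}$ has nonzero derivative there, and one concludes via the infinitesimal Torelli theorem for curves at the boundary \cite{usui}. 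Your citation of \cite{usui} as merely providing local coordinates on $\bar{\mathcal{A}}^\delta$ understates its role: it is exactly the input that controls this transverse direction.
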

\begin{proof}
Let $z$ be a general point in $Y_2$, $z:=\pa{\eta, R}$, $R:=R'+2x$ and $\eta:=\eta'\otimes \fascio{C}\pa{x}$. Moving $x\in C$, we define a one dimensional subvariety of $Y_2$
\[
W:=\set{\pa{\eta'\otimes\fascio{C}\pa{y}, R'+2y}: y\in C\setminus \supp R'}.
\]
Clearly, $W$ is birational to $C$.
The inclusions $W \subset Y_2 \subset \Upsilon$ induce a filtration
\begin{equation}
\label{eq:primafiltr}
T''_z \subset T'_z\subset T_z,
\end{equation}
where $T_z$ is the tangent space of $\Upsilon$ at the point $z$, $T'_z$ is the tangent space of $Y_2$ at $z$, and $T''_z$ is the tangent space of $W$ at $z$. We recall (see \eqref{eq:superf}) that $\Upsilon$ is an \'etale covering of the symmetric product $C_r$ and so we can identify $T_z$ with the tangent space of $C_r$ at $R$. Note that, under this identification, $T'_z$ is the tangent space of the diagonal of $C_r$ passing through $R$ and, consequently, we can identify it with the tangent space of $C_{r-1}$ at the point $R'+x$. We have two exact sequences
\begin{align*}
0 \ra T'_z \ra &T_z \ra N \ra 0,\\
0 \ra T''_z \ra &T'_z \ra N' \ra 0,
\end{align*}
where $N'$ corresponds to the tangent space of $C_{r-2}$ at the point $R'$.

The next step is to describe the tangent space at $\mathcal{S}\pa{z}$. We can put the period matrix $\bar M$ corresponding to the point $\mathcal{S}\pa{z}$ in the form
\begin{equation*}
\label{eq:periodmatrix}
\bar M= \left(
  \begin{array}{cc}
    0 & w^t \\
    w & M
     \\
  \end{array}
\right)
\end{equation*}
where $M$ is the period matrix of $P:=\prym\pa{C,\eta', R'}$ and, with the notations of \eqref{item:estMappa3} of Corollary \ref{cor:estmappa}, the vector $w$ represents essentially the class of $\pm[p_x-q_x]\in \mathcal{K}\pa{P}$. We consider now a filtration
\begin{equation*}
T''_{\mathcal{S}\pa{z}} \subset T'_{\mathcal{S}\pa{z}}\subset T_{\mathcal{S}\pa{z}},
\end{equation*}
analogous to that in \eqref{eq:primafiltr}, where $T_{\mathcal{S}\pa{z}}'$ is the tangent space of $\bar{\mathcal{A}}_{g-1+\frac{r}{2}}\setminus \mathcal{A}_{g-1+\frac{r}{2}}$ at $\mathcal{S}\pa{z}$, that is the space of the infinitesimal deformations of the matrix $\bar M$ such that the first entry is constantly zero. We define $T''_{\mathcal{S}\pa{z}}$ as the space of the infinitesimal deformations of $\bar M$ such that $M$ is constant, the first entry of $\bar M$ is zero and $w$ varies. We have again two exact sequences
\begin{align*}
0 \ra T'_{\mathcal{S}\pa{z}} &\ra T_{\mathcal{S}\pa{z}} \ra N_{\mathcal{S}\pa{z}} \ra 0,\\
0 \ra T''_{\mathcal{S}\pa{z}} &\ra T'_{\mathcal{S}\pa{z}} \ra N'_{\mathcal{S}\pa{z}} \ra 0,
\end{align*}
where $N_{\mathcal{S}\pa{z}}$ is the normal space of $\mathcal{A}_{g-1+\frac{r}{2}}$ in $\bar{\mathcal{A}}_{g-1+\frac{r}{2}}$ and $N'_{\mathcal{S}\pa{z}}$ is the space of the infinitesimal deformations of the compact Prym variety $P$.

The differential $d\mathcal{S}$ preserves the filtration and defines the following diagram
\[
\xymatrix{
0 \ar[r] & T''_{z}\ar[d]^{d\mathcal{S}|_{T''_{z}}} \ar[r] & T'_{z}\ar[d]^{d\mathcal{S}|_{T'_{z}}}  \ar[r] & N'_z\ar[r] \ar[d]^{d} &  0\\
0 \ar[r] & T''_{\mathcal{S}\pa{z}} \ar[r] & T'_{\mathcal{S}\pa{z}} \ar[r] & N'_{\mathcal{S}\pa{z}} \ar[r] & 0\\
}
\]
where $d$ is the map that makes the diagram commutative. We identify $d$ with the differential of the Prym map $\mathcal{R}_{g, r-2}\ra \mathcal{A}^\delta_{g-2+\frac{r}{2}}$ at a general point $\pa{C, \eta', R'}$. By Proposition \ref{prop:proplocaltorelli}, $d$ is injective. The restriction
\[
d\mathcal{S}|_{T''_z} \colon T''_z \ra T''_{\mathcal{S}\pa{z}}
\]
is the differential of the map
\begin{align*}
C &\ra \mathcal{K}\pa{P}\\
x &\mapsto \pm[p_x-q_x]
\end{align*}
at the point $x$. Since the map is finite on the image and $x$ is a general point of $C$, $d\mathcal{S}\pa{T''_z}\neq 0$. We can conclude that $d\mathcal{S}|_{T'_z}$ is injective.

To complete the proof, let $0\neq v\in T_z\setminus T'_z$; we claim that $d\mathcal{S}\pa{v}\neq 0$. Since $\dim T_z=\dim T'_z+1$, this implies  $\ker d\mathcal{S}\subset T'_z$. Consider the complex unit disk $\Delta$, a non constant map $\varphi \colon \Delta \ra \Upsilon$ such that $\varphi\pa{0}=z$ and $d\varphi\pa{\frac{d}{dt}}=v$, and the family of coverings \eqref{eq:famD_kcompl} described in Section \ref{subsec:Prymmapboundary}. We define the map
\[
\tau \colon \Delta\ra \bar{\mathcal{M}}_{2g-2+\frac{r}{2}}\\
\]
which sends $t$  to the class of isomorphism of $\mathcal{D}_t$.
Let
\[
T\colon \Delta  \ra \bar{\mathcal{A}}_{2g-1+\frac{r}{2}}\\
\]
be the composition of $\tau$ with the Torelli map. Note that $J\pa{\mathcal{D}_t}$ is isogenous to $J\pa{C}\times {\mathcal P}_t$ for each $t\in \Delta\setminus\{0\}$. Thus, in order to prove that $d\mathcal{S}\pa{v}\neq 0$, it is sufficient to show that $\frac{dT}{dt}\left.\right |_{t=0}\neq 0$. Since $d\tau_0\pa{v}$ is different from zero, this is a consequence of the infinitesimal Torelli theorem at the boundary (\cite{usui}).
\end{proof}

In order to compute the degree of $\mathcal{S}$ on $Y_2$, we need the following lemma.

\begin{lemma}
\label{lemma:nuovoPrym}
 Let $\pi\colon D \ra C$ be a double covering of a curve of genus $g\geq 1$ with $r>0$ branch points.
 \begin{enumerate}
  \item \label{item:abelprym} If $D$ is not hyperelliptic, then the Abel-Prym curve $D'\subset P$ determines the covering $\pi$.
  \item \label{item:Dnonhe} Assume that $\pi\colon D \ra C$ defines a general point in $\mathcal{R}_{g,r}$. The curve $D$ is hyperelliptic (and in particular the Abel-Prym map has degree one, see \eqref{item:AbelPrymPrelim} of Section \ref{subsec: preliminaries}) if and only if $g=1$ and $r=2$.
 \end{enumerate}
\end{lemma}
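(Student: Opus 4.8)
The plan is to treat the two statements in turn, using the structure of the Abel–Prym map recalled in \eqref{item:AbelPrymPrelim} of Section \ref{subsec: preliminaries}. For part \eqref{item:abelprym}, suppose $D$ is not hyperelliptic. Then the Abel–Prym map $a\colon D\ra P$ is birational onto its image $D'$, and since a birational morphism from a smooth curve is in fact a closed embedding for curves of this type (or, more conservatively, an isomorphism onto its image after normalization), the curve $D$ is recovered as the normalization of $D'\subset P$. The involution $i$ on $D$ is recovered from the $(-1)$-involution of the abelian variety $P$: indeed $a(i(x))=[i(x)-x]=-a(x)$, so the fixed-point-free data on the normalization of $D'$ induced by $x\mapsto -x$ on $P$ coincides with $i$, and hence the quotient $C=D/i$ together with the covering map $\pi$ is determined. (Here one must check that $-1$ on $P$ does restrict to an involution of $D'$, which is immediate from the displayed identity, and that no extra identifications occur, which follows from the birationality of $a$.) Thus the pair $(D,\pi)$, and therefore the covering, is determined by $D'\subset P$.

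For part \eqref{item:Dnonhe}, the ``if'' direction is a direct check: when $g=1$ and $r=2$, the curve $D$ has genus $g(D)=2g-1+\tfrac r2=2$, and every genus-$2$ curve is hyperelliptic, so in particular $a$ has degree two onto its image. For the ``only if'' direction, assume $\pi\colon D\ra C$ is a general point of $\mathcal{R}_{g,r}$ with $D$ hyperelliptic; I want to force $g=1$ and $r=2$. The hyperelliptic involution $\iota$ on $D$ is unique, hence it commutes with $i$; therefore $i\iota$ is another involution of $D$ and we get a quotient picture relating $D$, $C=D/i$, $D/\iota\cong\mathbb P^1$, and $D/(i\iota)$. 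A hyperelliptic curve admitting a free (or ramified-double) involution of this kind is highly constrained: pushing the $g^1_2$ down, $C$ itself must be hyperelliptic (or of very low genus), and the branch divisor $R$ must be compatible with the hyperelliptic structure, cutting the dimension of the locus of such coverings well below $\dim\mathcal{R}_{g,r}$ unless $g$ and $r$ are as small as possible. Making this count precise — comparing $\dim\mathcal{R}_{g,r}=3g-3+r$ with the dimension of the sublocus of coverings with $D$ hyperelliptic — and checking that equality (so that ``general'' can meet it) forces exactly $(g,r)=(1,2)$ is the crux.

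The main obstacle is precisely this dimension estimate in part \eqref{item:Dnonhe}: one has to identify the locus in $\mathcal{R}_{g,r}$ over which $D$ is hyperelliptic, describe it via the auxiliary quotients $D/\iota$ and $D/(i\iota)$ (equivalently via a morphism $D\ra\mathbb P^1$ of degree $2$ together with the degree-$2$ map $D\ra C$), parametrize it, and show it is a proper subvariety except in the stated case. The first part is comparatively soft, the only care needed being the passage from ``birational'' to ``the curve and its involution are genuinely reconstructible'', which is standard for Abel–Prym curves.
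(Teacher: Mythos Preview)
Your treatment of part \eqref{item:abelprym} is correct and matches the paper's: the paper passes to the Kummer variety $\mathcal{K}(P)$, which is exactly your ``quotient by $-1$'' move, and recovers $\pi$ by normalizing the resulting double cover $D'\to\sigma(D')$.

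For part \eqref{item:Dnonhe} your opening is right --- the hyperelliptic involution $\iota$ commutes with $i$ --- but the completion you sketch (parametrize the hyperelliptic locus via the auxiliary quotients $D/\iota$, $D/(i\iota)$ and run a dimension count against $3g-3+r$) is heavier than necessary, and you explicitly leave it as ``the crux'' without carrying it out. The paper's argument is shorter and stays entirely on $C$: since $\iota$ commutes with $i$ and is distinct from both $\id$ and $i$, it descends to a \emph{nontrivial} involution $j'$ of $C$, and $j'$ must preserve the branch divisor $R$. Now split on $g$. For $g>2$ a general curve has no nontrivial automorphism at all, contradiction. For $g=2$ (any $r\geq 2$) or $g=1$ with $r\geq 4$, one checks directly that a general reduced effective divisor of degree $r$ on $C$ is not invariant under any involution of $C$ (for $g=1$ this is the only place a small count enters: the involutions of an elliptic curve form a one-parameter family, and invariant divisors of degree $r$ depend on roughly $r/2$ parameters). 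Only $(g,r)=(1,2)$ survives. This replaces your undeveloped dimension estimate with a two-line case analysis and avoids any discussion of $D/(i\iota)$ or of pushing down the $g^1_2$.
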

\begin{proof}
 \begin{enumerate}
  \item Let us consider the natural projection $\sigma\colon P \ra \mathcal{K}\pa{P}$ and set $C':=\sigma \pa{D'}$. The morphism $\sigma\colon D' \ra C'$ is a double covering. Passing to the normalization, we get $\pi\colon D \ra C$.
  \item If $g=1$ and $r=2$, then $g\pa{D}=2$ and $D$ is hyperelliptic. To prove the converse, assume by contradiction that $D$ is hyperelliptic and denote by $j\colon D \ra D$ the hyperelliptic involution and by $i\colon D \ra D$ the involution associated to $\pi$. Since $j$ commutes with $i$ (see e.g. \cite[Section III.8, Corollary 3]{FarkasKra}), it induces a non-trivial involution $j'\colon C \ra C$ that is invariant on the branch divisor of $\pi$. If $g>2$, we get a contradiction since a generic curve has only trivial automorphisms. Otherwise, when $g=2$ and $r\geq 2$ or $g=1$ and $r\geq 4$, it is always possible to find an effective divisor of degree $r$ that is not fixed by any involution.
 \end{enumerate}

\end{proof}

\begin{lemma}
\label{lemma:genin}
If $g>2$,
\[
\mathcal{S} \colon Y_2 \ra \mathcal{S}\pa{Y_2}\subset \bar{\mathcal{A}}_{g-1+\frac{r}{2}}^\delta
\]
is generically injective.
\end{lemma}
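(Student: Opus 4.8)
The plan is to reconstruct a general point $z\in Y_2$ from its image $\mathcal{S}\pa{z}$ by means of the Abel--Prym curve. Write $z=\pa{\eta'\otimes\fascio{C}\pa{x},\ R'+2x}$, where $R'\in\abs{\pa{\eta'}^2}$ has no multiple points and $x\notin\supp R'$; as $z$ is general, $\pa{C,\eta',R'}$ is a general point of $\mathcal{R}_{g,r-2}$ (note $r\geq 6$, so $r-2\geq 4>0$). By \eqref{item:estMappa3} of Corollary \ref{cor:estmappa} the point $\mathcal{S}\pa{z}$ is the pair $\pa{P,\ \pm[p_x-q_x]}$, where $P$ is the compact Prym variety of the double covering $\pi\colon E\ra C$ attached to $\pa{C,\eta',R'}$ and $\pi^{-1}\pa{x}=\set{p_x,q_x}$.

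First I would recover $\pi$ from $P$ alone. Since $C$ is general of genus $g>2$ it has no non-trivial automorphism, so by \eqref{item:Dnonhe} of Lemma \ref{lemma:nuovoPrym} the curve $E$ is not hyperelliptic and the Abel--Prym map $a\colon E\ra P$ is birational onto its image. Then \eqref{item:abelprym} of the same lemma shows that the Abel--Prym curve $E':=a\pa{E}\subset P$ determines the covering $\pi\colon E\ra C$; in particular there is a rational map $\rho\colon E'\dashrightarrow C$ carrying a general point $a\pa{y}\in E'$ to $\pi\pa{y}\in C$, and the whole triple $\pa{C,\eta',R'}$ is recovered from $P$, with $R'$ the branch divisor of $\pi$ and ${\eta'}^{-1}$ the non-trivial direct summand of $\pi_*\fascio{E}$.

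It remains to recover $x$. For $x$ general the point $[p_x-q_x]=a\pa{p_x}\in P$ is neither zero nor of order two (the $2$-torsion points of $P$ are finite in number and $a$ is birational onto the curve $E'$), so the fibre of the projection $\sigma\colon P\ra\mathcal{K}\pa{P}$ over $\pm[p_x-q_x]$ consists of exactly the two points $[p_x-q_x]$ and $[q_x-p_x]$, both of which lie on $E'$ since $[p_x-q_x]=a\pa{p_x}$ and $[q_x-p_x]=a\pa{q_x}$. Applying $\rho$ to either of them gives $\pi\pa{p_x}=\pi\pa{q_x}=x$. Hence $x$, and therefore $\eta=\eta'\otimes\fascio{C}\pa{x}$ and $R=R'+2x$, are determined by $\mathcal{S}\pa{z}$; so $\mathcal{S}|_{Y_2}$ is generically injective.

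I do not expect a serious obstacle here: once Lemma \ref{lemma:nuovoPrym} is available the argument is a direct reconstruction, which is why this lemma is short. The only points that need care are the genericity statements — that on a dense open subset of $Y_2$ one has $E$ non-hyperelliptic (automatic from $\Aut\pa{C}=\set{\id}$), $[p_x-q_x]$ not of order two, and $[p_x-q_x]$ a smooth point of $E'$ at which $\rho$ is defined, each being an open dense condition whose failure locus projects to a proper subset of $Y_2$ by a dimension count over the base $\set{\pa{\eta',R'}}$ — together with the harmless observation that a general $z\in Y_2$ produces a general triple of $\mathcal{R}_{g,r-2}$, because $\mathcal{R}_{g,r-2}\ra\mathcal{M}_g$ is dominant with fibre over our fixed curve $C$ dense in a component of the general fibre.
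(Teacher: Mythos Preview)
There is a genuine gap in the first step. You write ``recover $\pi$ from $P$ alone'' and justify this by Lemma~\ref{lemma:nuovoPrym}\eqref{item:abelprym}, but that lemma says the Abel--Prym curve $E'\subset P$ determines $\pi$, not that $P$ determines $E'$. The curve $E'=a(E)$ is built \emph{from} the covering $\pi$; it is not intrinsic to the polarized abelian variety $P$. Knowing only $\mathcal{S}(z)=\pa{P,\pm[p_x-q_x]}$, you have no way to single out $E'$ inside $P$ without already knowing $\pi$. What you are implicitly assuming is generic injectivity of the Prym map $\mathcal{R}_{g,r-2}\to\mathcal{A}^\delta_{g-2+\frac{r}{2}}$, which is essentially the theorem under proof; and in at least one case covered by the lemma it is outright false: for $r=6$, $g=3$ the map $\mathcal{R}_{3,4}\to\mathcal{A}^\delta_4$ has degree~$3$, so three distinct coverings of $C$ share the same $P$, each contributing its own Abel--Prym curve.

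The paper fixes exactly this point. It uses only \emph{finiteness} of the Prym map at level $(g,r-2)$ (Corollary~\ref{cor:Prymfinite}) to list the finitely many candidates $\pa{C,\eta^k,R^k}$ with $\prym\pa{C,\eta^k,R^k}=P$, and then shows via Lemma~\ref{lemma:nuovoPrym} that the images of their Abel--Prym curves in $\mathcal{K}\pa{P}$ are pairwise distinct. Hence for general $x$ the class $\pm[p_x-q_x]$ lies on exactly one of these curves, which picks out the correct $\pa{\eta',R'}$; only then does your second paragraph apply to recover $x$. Your reconstruction of $x$ from $\pm[p_x-q_x]$ once $\pi$ is known is fine and is essentially the paper's argument that $\mathcal{S}|_{W^k}$ is generically injective; what is missing is the use of the extension class to separate the finitely many $W^k$.
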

\begin{proof}
We recall that, by Corollary \ref{cor:estmappa}, a point
\[
\mathcal{S}\pa{\eta'\otimes\fascio{C}\pa{x}, R'+2x} \in \mathcal{S}\pa{Y_2}
\]
is determined by the Prym variety $P=\mathcal{P}\pa{C, \eta', R'}$ of dimension $\pa{g-1+\frac{r}{2}}-1$ and by the class $\pm[p_x-q_x]\in \mathcal{K}\pa{P}$. Let $P$ be a general Prym variety and let $V\subset \mathcal{S}\pa{Y_2}$ be the set of points that parametrize semi-abelian varieties of rank $1$ with compact part isomorphic to $P$. To prove the statement it is sufficient to prove that $\mathcal{S}$ is generically injective on $W:=\mathcal{S}^{-1}\pa{V}$.

Let us consider the Prym map
\[
\prym \colon \mathcal{R}_{g,r-2} \ra \mathcal{A}_{g-2+\frac{r}{2}}^\delta.
\]
By Corollary \ref{cor:Prymfinite}, there is only a finite number of points
\[
\set{\pa{C,\eta^k,R^k}\in \mathcal{R}_{g,r-2}}_{k=1, \ldots, n},
\]
such that $P =\prym \pa{C,\eta^k,R^k}$. Denote by $\pi^k \colon D^k \ra C$ the double covering of smooth curves associated to $\pa{C,\eta^k,R^k}$ and, for $x\in C$, set ${\pa{\pi^k}}^{-1}\pa{x}=\set{p^k_x,q^k_x}$.
We have
\[
W=\bigcup_{k=1}^nW^k,
\]
where
\[
W^k:=\set{\pa{\eta^k\otimes \fascio{C}\pa{x},R^k+2x}\in \Upsilon : x\in C \setminus \supp R^k}.
\]
To prove that $\mathcal{S}|_{W}$ is generically injective we show that, for each $i\neq j$, the curve $\mathcal{S}\pa{W^i}$ intersects $\mathcal{S}\pa{W^j}$ only in a finite number of points and that, for each $i$, the map $\mathcal{S}|_{W^i} \colon W^i \ra \mathcal{S}\pa{W^i}$ is generically injective.

We recall that
\begin{equation*}
\mathcal{S}\pa{\eta^i\otimes \fascio{C}\pa{x},R^i+2x}=\mathcal{S}\pa{\eta^j\otimes \fascio{C}\pa{y},R^j+2y}.
\end{equation*}
if and only if
\[
 \pm[p^i_x-q^i_x]=\pm[p^j_y-q^j_y]\in \mathcal{K}\pa{P}.
\]
It follows that $\ol {\mathcal{S}\pa{W^i}}=\ol {\mathcal{S}\pa{W^j}}$ if and only if the image of the Abel-Prym curve of $\pi^i$ in $\mathcal{K}\pa{P}$ coincides with that of $\pi^j$, that is (see Lemma \ref{lemma:nuovoPrym}) if and only if $i=j$. Furthermore, the map $\mathcal{S}|_{W^i}$ is generically injective (see \eqref{item:Dnonhe} of Lemma \ref{lemma:nuovoPrym}).
\end{proof}

\begin{lemma}
\label{lemma:genin2}
If $g=2$,
\[
\mathcal{S}\colon  Y_2 \ra \mathcal{S}\pa{Y_2}\subset \bar{\mathcal{A}}_{g-1+\frac{r}{2}}^\delta
\]
has generically degree two.
\end{lemma}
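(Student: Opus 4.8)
The plan is to factor $\mathcal{S}|_{Y_2}$ through the hyperelliptic involution of $C$ and then re-run the argument of Lemma~\ref{lemma:genin}. For a general genus $2$ curve $C$ the only non-trivial automorphism is the hyperelliptic involution $i\colon C\ra C$; write $i$ also for the induced involution $(\eta,R)\mapsto(i^*\eta,i(R))$ of $\Upsilon$, which preserves each stratum $Y_k$. By \eqref{eq:gradoinv} the rational map $\mathcal{T}\colon\Upsilon\dashrightarrow\mathcal{R}_{g,r}$ satisfies $\mathcal{T}\circ i=\mathcal{T}$, hence $\prym'$ is $i$-invariant on $Y_1$; since $\mathcal{S}$ extends $\prym'$ and, by Corollary~\ref{cor:estmappa}, is regular on $Y_1\cup Y_2$, we get $\mathcal{S}\circ i=\mathcal{S}$ on $Y_2$. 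As $i$ fixes a point $(\eta,R)$ only when $R$ is $i$-invariant and $\eta\simeq i^*\eta$, a proper closed condition on $Y_2$, the quotient map $Y_2\dashrightarrow Y_2/i$ has degree $2$, and it suffices to prove that the induced map $\bar{\mathcal{S}}\colon Y_2/i\dashrightarrow\mathcal{S}(Y_2)$ is generically injective.

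For this I would localize exactly as in Lemma~\ref{lemma:genin}. We may assume $r\geq 8$ (when $g=2$ this is the range of Theorem~\ref{theorem: globaltorelli}; for $r=6$, $g=2$ the relevant Prym map $\mathcal{R}_{2,4}\ra\mathcal{A}^\delta_3$ is not even generically finite). Then $r-2\geq 6$, so by Corollary~\ref{cor:Prymfinite} the Prym map $\prym\colon\mathcal{R}_{g,r-2}\ra\mathcal{A}^\delta_{g-2+r/2}$ is generically finite. Fix a general pair $(\eta',R')$ over $C$, set $P:=\prym(C,\eta',R')$, and let $V\subset\mathcal{S}(Y_2)$ be the locus of rank $1$ semi-abelian varieties with compact part isomorphic to $P$. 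By Corollary~\ref{cor:Prymfinite} there are finitely many coverings $\pi^k\colon D^k\ra C$ with $P=\prym(C,\eta^k,R^k)$, $k=1,\dots,n$; the computation of Lemma~\ref{lemma:genin} then gives $\mathcal{S}^{-1}(V)\cap Y_2=\bigcup_{k}\pa{W^k\cup i(W^k)}$, where $W^k:=\set{(\eta^k\otimes\fascio{C}(x),R^k+2x):x\in C\setminus\supp R^k}$ — each point $(C,\eta^k,R^k)$ now contributing \emph{two} pairs in $\Upsilon$ because $\Aut C=\set{\id,i}$ — so that in $Y_2/i$ the preimage of $V$ is a union of $n$ curves, one for each $\pi^k$.

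The decisive step is again that of Lemma~\ref{lemma:genin}: $\ol{\mathcal{S}(W^i)}=\ol{\mathcal{S}(W^j)}$ if and only if the images in $\mathcal{K}(P)$ of the Abel-Prym curves of $\pi^i$ and $\pi^j$ coincide, which by Lemma~\ref{lemma:nuovoPrym}\eqref{item:abelprym} holds if and only if $\pi^i$ and $\pi^j$ are isomorphic coverings, i.e.\ (using $\Aut C=\set{\id,i}$) if and only if $(C,\eta^i,R^i)=(C,\eta^j,R^j)$ in $\mathcal{R}_{g,r-2}$, that is $i=j$. Since $g=2\neq1$, Lemma~\ref{lemma:nuovoPrym}\eqref{item:Dnonhe} shows that each $D^k$ is not hyperelliptic, so the Abel-Prym map of $\pi^k$ is birational and $\mathcal{S}|_{W^k}$ (hence also $\mathcal{S}|_{i(W^k)}$) is generically injective. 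Passing to $Y_2/i$, this makes $\bar{\mathcal{S}}$ generically injective on the curve attached to each $\pi^k$, while the previous sentence shows these $n$ curves have distinct images; hence $\bar{\mathcal{S}}$ is generically injective and $\deg\mathcal{S}|_{Y_2}=2$.

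The part I expect to require the most care is the reduction to $Y_2/i$: one has to verify that $\mathcal{S}$ remains $i$-invariant on the boundary stratum $Y_2$ (and not merely on the interior $Y_1$, where it is the composite $\prym\circ\mathcal{T}$) and that $i$ acts freely on a dense open subset of $Y_2$, so that the general fibre of $\mathcal{S}|_{Y_2}$ consists exactly of a pair $(\eta,R),(i^*\eta,i(R))$ and of nothing more. Granting this, the rest is a transcription of the proof of Lemma~\ref{lemma:genin}, the one extra ingredient being that $D^k$ is non-hyperelliptic because $g\neq1$.
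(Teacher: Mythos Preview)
Your proposal is correct and follows essentially the same strategy as the paper. The paper's proof simply writes $W=\bigcup_k W^k\cup\bigcup_k W^k_*$ with $W^k_*=\set{(i^*\eta^k\otimes\fascio{C}(x),\,i(R^k)+2x)}$, observes the identity $\mathcal{S}(\eta^k\otimes\fascio{C}(x),R^k+2x)=\mathcal{S}(i^*\eta^k\otimes\fascio{C}(i(x)),i(R^k)+2i(x))$, and then says ``the rest of the proof is analogous to that of Lemma~\ref{lemma:genin}''; your quotient formulation $Y_2\to Y_2/i$ is exactly the same content, and your justification of the $i$-invariance of $\mathcal{S}$ on $Y_2$ by continuity from $Y_1$ is a clean way to obtain the displayed identity. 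Your explicit remark that one needs $r\geq 8$ (so that $\prym\colon\mathcal{R}_{2,r-2}\to\mathcal{A}^\delta_{r/2}$ is generically finite) is a useful clarification that the paper leaves implicit.
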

\begin{proof}
Let $i \colon C \ra C$ be the hyperelliptic involution. With the same notations used in Lemma \ref{lemma:genin}, we have
\[
W=\bigcup_{k=1}^nW^k \cup \bigcup_{k=1}^nW^k_*,
\]
where
\[
W_*^k:=\set{\pa{i^*\eta^k\otimes \fascio{C}\pa{x},i(R^k)+2x}\in \Upsilon : x\in C \setminus \supp i(R^k)}.
\]
Furthermore, for each $x$,
\[
\mathcal{S}\pa{\eta^k\otimes \fascio{C}\pa{x},R^k+2x}=\mathcal{S}\pa{i^*\eta^k\otimes \fascio{C}\pa{x},i(R^k)+2i\pa{x}}.
\]
The rest of the proof is analogous to that of Lemma \ref{lemma:genin}.
\end{proof}

\subsection{End of proof of Theorem \ref{theorem: globaltorelli}}

We are now ready to complete the proof of the Generic Torelli Theorem for Prym varieties of ramified coverings. We denote by 
\[
   \mathcal{U}_{C}:=\set{\pa{C, \eta, R}\in \mathcal{R}_{g,r}} \subset \mathcal{R}_{g,r}                                                                                                                              
\]
the moduli space of double coverings of a fixed curve $C$ with $r$ branch points.

\begin{lemma}
\label{lemma:torelliQspiegato}
Let $C$ be a general curve of genus $1<g<r$, with $r\geq 6$. Then the Prym map is generically injective on $\mathcal{U}_{C}$.
\end{lemma}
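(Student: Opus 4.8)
The plan is to deduce Lemma~\ref{lemma:torelliQspiegato} directly from Proposition~\ref{prop: torelliQ} together with the analysis of the rational map $\mathcal{T}$ in~\eqref{eq:razriv}. Recall that $\mathcal{S}=\prym'\circ(\text{extension})$ and that $\prym'=\prym\circ\mathcal{T}$ on $Y_1$; moreover $Y_1$ is a dense open subset of $\Upsilon$ and $\mathcal{T}|_{Y_1}\colon Y_1\ra \mathcal{U}_C$ is dominant. So the key point is to transfer the statement ``$\mathcal{S}$ has generic degree $1$ if $g>2$ and generic degree $2$ if $g=2$'' from $\Upsilon$ to $\mathcal{U}_C$ along $\mathcal{T}$.

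First I would treat the case $g>2$. Here $\mathcal{T}|_{Y_1}$ is an isomorphism onto its image (as stated after~\eqref{eq:razriv}), so for a general $\pa{C,\eta,R}\in\mathcal{U}_C$ the fibre $\mathcal{T}^{-1}\pa{C,\eta,R}$ is a single point $\pa{\eta,R}\in Y_1$. By Proposition~\ref{prop: torelliQ} the general fibre of $\mathcal{S}$, hence of $\prym'$ over the image of a general point of $Y_1$, has cardinality one; since $\prym'=\prym\circ\mathcal{T}$ and $\mathcal{T}$ is generically injective here, a general fibre of $\prym|_{\mathcal{U}_C}$ also has cardinality one. This gives generic injectivity of $\prym$ on $\mathcal{U}_C$ when $g>2$.

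For $g=2$ the map $\mathcal{T}|_{Y_1}$ has degree two, with fibre $\set{\pa{\eta,R},\pa{i^*\eta,i\pa{R}}}$ as in~\eqref{eq:gradoinv}, where $i$ is the hyperelliptic involution. By Proposition~\ref{prop: torelliQ} the general fibre of $\mathcal{S}$ has cardinality two; one checks, using Corollary~\ref{cor:estmappa} and the explicit description in Lemma~\ref{lemma:genin2}, that for a general point the two preimages under $\mathcal{S}$ are exactly $\pa{\eta,R}$ and $\pa{i^*\eta,i\pa{R}}$, i.e. they lie in a single $\mathcal{T}$-fibre. Passing to $\mathcal{U}_C$ via $\mathcal{T}$ therefore collapses the degree-two fibre of $\mathcal{S}$ to a single point of $\mathcal{U}_C$, so $\prym|_{\mathcal{U}_C}$ is again generically injective. (In effect, the degree two of $\mathcal{S}$ for $g=2$ is entirely accounted for by the degree two of $\mathcal{T}$, which is the reason Proposition~\ref{prop: torelliQ} was formulated with that precise degree.)

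The main obstacle is the bookkeeping in the $g=2$ case: one must verify that the two points in a general fibre of $\mathcal{S}$ genuinely coincide with a $\mathcal{T}$-fibre, rather than producing two distinct points of $\mathcal{U}_C$. This is exactly what Proposition~\ref{prop: torelliQ} guarantees in combination with~\eqref{eq:gradoinv}: since $\mathcal{T}$ already contributes a degree-two map $Y_1\ra\mathcal{U}_C$ and $\mathcal{S}$ has total generic degree two, no further fibre multiplicity can survive on $\mathcal{U}_C$, forcing $\prym|_{\mathcal{U}_C}$ to be generically one-to-one. Once Lemma~\ref{lemma:torelliQspiegato} is in hand, combining it with Corollary~\ref{cor:gentorellifirst} (which handles $g\geq r$ by recovering $\pa{C,\eta}$ from the differential, and hence $C$ as well since $C$ is determined by its semi-canonical model) completes the proof of Theorem~\ref{theorem: globaltorelli}: for $g<r$ one fixes a general $C$ and applies Lemma~\ref{lemma:torelliQspiegato}, while an irreducibility/countability argument over the moduli of curves upgrades ``general $C$'' to ``general $\pa{C,\eta,R}$''.
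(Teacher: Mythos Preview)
Your proposal is correct and follows the same approach as the paper: reduce to Proposition~\ref{prop: torelliQ} via the factorization $\mathcal{S}|_{Y_1}=\prym\circ\mathcal{T}|_{Y_1}$ and the multiplicativity of generic degrees. The appeal to Lemma~\ref{lemma:genin2} in the $g=2$ case is unnecessary and slightly misplaced (that lemma concerns $Y_2$, not $Y_1$); the plain degree count $\deg\mathcal{S}=\deg(\prym|_{\mathcal{U}_C})\cdot\deg\mathcal{T}$ already forces $\deg(\prym|_{\mathcal{U}_C})=1$, exactly as you note at the end.
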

\begin{proof}
We recall that on the open set $Y_1\subset \Upsilon$ (see \eqref{eq:defD_k}) the map $\mathcal{S}$ coincides with the composition of $\mathcal{T} \colon \Upsilon \dashrightarrow \mathcal{R}_{g,r}$ (see \eqref{eq:razriv}) with the Prym map. Furthermore $\mathcal{T}$ is an isomorphism for $g>2$ and a map of degree two for $g=2$. Since $\mathcal{U}_{C}=\mathcal{T}\pa{Y_1}$, the statement follows from Proposition \ref{prop: torelliQ}.
\end{proof}

\begin{proof}[Proof of Theorem \ref{theorem: globaltorelli}]
If $r=4$ and $g=4$, the theorem is a direct consequence of \cite[Theorem 7.7]{pol1122}. By Corollary \ref{cor:gentorellifirst} the theorem is proved for $g\geq r$. Let us assume $g<r$ and consider a general point $y\in \prym\pa{\mathcal{R}_{g,r}}$. By Corollary \ref{cor:Prymfinite},
\[
\mathcal{P}^{-1}\pa{y}=\set{\pa{C_i,\eta_i,R_i}}_{i=1,\ldots,n}.
\]
Furthermore, since we can assume $y$ smooth in the Prym locus, by Corollary \ref{cor:gentorellifirst}, $C_i=C_j$ for each $i,j$. Finally, by Lemma \ref{lemma:torelliQspiegato}, $n=1$.
\end{proof}

\section{Applications}
\label{sec:applications}

\subsection{The Gauss map of a Prym variety}
\label{subsec:Gauss}

We recall the definition of the Gauss map: if $M$ is a complex torus of dimension $n+1$, then the tangent spaces $\set{T_xM}_{x\in M}$ are all naturally identified with $T_0M\simeq\mathbb{C}^{n+1}$. Let $X$ be an analytic subvariety of dimension $k+1$ of $X$ and denote by $X_{\mathrm{ns}}$ the smooth locus of $X$ and by $\mathbb{G}\pa{k,n}$ the Grassmanian of the $k$-planes in $\mathbb{P}^n$. The \emph{Gauss map} of $X$ in $M$ is the map
\begin{align*}
  X_{\mathrm{ns}} &\ra \mathbb{G}\pa{k,n}\\
x &\mapsto \mathbb{P}T_x\pa{X}\subset \mathbb{P}T_x\pa{M}=\mathbb{P}^n.
\end{align*}

Note that a Prym variety $P$ is not principally polarized; in fact if $L$ is the polarization on $P$, then $h^0\pa{P, \fascio{P}\pa{\Theta_P}}=2^g$, where $g$ is the genus of $C$. Thus, given $P$, there is no natural choice of divisor in the associated linear system. However, we have proved in Theorem \ref{theorem: globaltorelli} that a general Prym variety $P$ arises from a unique covering $\pi\colon D \ra C$, and so in this case we may assign to $P$ the divisor $\Theta_P:=\Theta_{J\pa{D}}\cap P$, which is well defined up to translation. In analogy with the case of Jacobian varieties of hyperelliptic curves (see \cite{andreotti}), the pair $\pa{P, \Theta_P}$ allows us to determine the branch locus of the covering $\pi$. In fact we have:
\begin{prop}
\label{prop: Gaussmap}
Let $P:=\prym\pa{C, \eta, R}$ be the Prym variety of $\pi \colon D\ra C$. If we identify the tangent space at any point of $P$ with $H^0\pa{C, \omega_C \otimes \eta}^*$, then the branch locus $\mathfrak{B}$ of the Gauss map
\[
\pa{\Theta_P}_{\mathrm{ns}}\ra \mathbb{P}H^0\pa{C, \omega_C \otimes \eta},
\]
of $\Theta_P$ in $P$, is dual to the branch locus of $f_\eta \circ \pi$, where $f_\eta \colon C_\eta \ra \mathbb{P}H^0\pa{C, \omega_C \otimes \eta}^*$ is the semi-canonical map. In particular, if $\omega_C\otimes\eta$ is very ample, $\mathfrak{B}$ consists of $r$ distinct hyperplanes.
\begin{proof}
Set $d:=2g-2+\frac{r}{2}=g\pa{D}-1$ and define
\[
\hat{P}:=\pi^{-1}\pa{\omega_C\otimes \eta}\subset \pic^d\pa{D},\qquad \Theta_{\hat{P}}:= \tilde\Theta\cap \hat{P},
\]
where $\tilde\Theta$ is the set of the effective line bundles of degree $d$ on $D$. Due to the isomorphism between $J\pa{C}$ and $\pic^d\pa{C}$ that maps $0$ to $\omega_C\otimes \eta$, we can identify $(\hat P, \Theta_{\hat{P}})$ with $\pa{P, \Theta_{P}}$. Consider the vector spaces
\[
H:=H^0\pa{D,\omega_{D}}^*, \qquad H^+, \qquad H^{-},
\]
where $H^+$ and $H^{-}$ are the eigenspaces of $1$ and $-1$ for the involution induced on $H$ by $\pi$. The map $\pi^*$ identifies canonically $H^+$ and $H^{-}$ with $H^0\pa{C, \omega_C}^*$ and $H^0\pa{C,\omega_C\otimes \eta}^*$ respectively (cf. \cite{verra} for the \'etale case). Thus we have the following commutative diagram
\[
\xymatrix{
D \ar[d]_{\pi} \ar[rr]^{f_{\omega_D}} &&\mathbb{P}H\ar@{-->}[d]^{h^{-}} \\
C \ar[rr]^{f_\eta} && \mathbb{P}{H^-}
}
\]
where $f_{\omega_D}$ is the canonical map of $D$ and $h^{-}$ is the projection of centre $\mathbb{P}{H^+}$. Notice that $H$ can be identified with the tangent space of $\pic^d\pa{D}$ at any point and, consequently, $H^{-}$ is the tangent space to $\hat P$.

Denote by $\mathcal{G}$ the Gauss map of $\Theta_{\hat{P}}$ in $\hat P$ and  by $\mathcal{G}'$ the Gauss map of $\tilde \Theta$ in $\pic^d\pa{D}$. Given $\sum_{j=1}^{d} p_j\in \pa{\Theta_{\hat{P}}}_{\mathrm{ns}}$, we have $\sum_{j=1}^{d} p_j\in \tilde \Theta_{\mathrm{ns}}$, and $\mathcal{G}\pa{\sum_{j=1}^{d} p_j}$ is simply the projection of $\mathcal{G}'\pa{\sum_{j=1}^{d} p_j}$ (the hyperplane spanned by the points $\set{f_{\omega_D}\pa{p_j}}_{j=1}^d$) under $h^-$. We can conclude that $\mathcal{G}\pa{\sum_{j=1}^{d} p_j}$ is the hyperplane of $\mathbb{P}{H^-}$ which intersects the semi-canonical model $C_\eta$ in the points $\set{f_\eta\circ\pi\pa{p_j}=h^{-}\circ f_{\omega_D}\pa{p_j}}_{j=1}^d$.
\end{proof}
\end{prop}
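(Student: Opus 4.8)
\textbf{Proof proposal for Proposition \ref{prop: Gaussmap}.}

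The plan is to pull back the geometry of the Gauss map from the Prym to the curve $D$ via the canonical map, exactly in the spirit of Andreotti's argument for hyperelliptic Jacobians. First I would replace the pair $\pa{P,\Theta_P}$ by the model $\pa{\hat P, \Theta_{\hat P}}$ sitting inside $\pic^d\pa{D}$, where $d:=2g-2+\frac r2=g\pa{D}-1$ and $\hat P:=\pi^{-1}\pa{\omega_C\otimes\eta}$; using the translation of $\pic^d\pa{C}$ to $J\pa{C}$ sending $\omega_C\otimes\eta$ to $0$, this identification is compatible with the theta divisors, so it suffices to compute the Gauss map of $\Theta_{\hat P}=\tilde\Theta\cap\hat P$ inside $\hat P$. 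The point is that a general effective line bundle $\sum_{j=1}^d p_j$ on $D$ of degree $d$ lies on $\tilde\Theta$ with smooth point, and the Gauss map $\mathcal G'$ of $\tilde\Theta$ in $\pic^d\pa{D}$ sends it to the hyperplane of $\mathbb P H^0\pa{D,\omega_D}^*$ spanned by the canonical images $f_{\omega_D}\pa{p_j}$ — this is the classical description of the Gauss map of a Jacobian theta divisor.

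Next I would set up the eigenspace decomposition $H=H^0\pa{D,\omega_D}^*=H^+\oplus H^-$ under the involution $\pi$ induces, and observe that $\pi^*$ identifies $H^+\cong H^0\pa{C,\omega_C}^*$ and $H^-\cong H^0\pa{C,\omega_C\otimes\eta}^*$; here one uses that $\omega_D\cong\pi^*\pa{\omega_C\otimes\eta}$ restricted to the anti-invariant part, so the projection $h^-\colon\mathbb P H\dashrightarrow\mathbb P H^-$ with centre $\mathbb P H^+$ composed with $f_{\omega_D}$ realizes $f_\eta\circ\pi\colon D\to C_\eta\subset\mathbb P H^-$. The tangent space to $\hat P$ at any point is canonically $H^-$, and the Gauss map $\mathcal G$ of $\Theta_{\hat P}$ in $\hat P$ is obtained from $\mathcal G'$ by intersecting the spanned hyperplane with $\mathbb P H^-$; thus $\mathcal G\pa{\sum p_j}$ is the hyperplane of $\mathbb P H^-$ through the points $h^-\circ f_{\omega_D}\pa{p_j}=f_\eta\circ\pi\pa{p_j}$, i.e.\ the hyperplane cutting $C_\eta$ in the image points of the fibre. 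In other words $\mathcal G$ factors as the Gauss map of the curve $C_\eta\subset\mathbb P H^-$ precomposed with $f_\eta\circ\pi$, so its branch locus is precisely the dual of the branch locus of $f_\eta\circ\pi$.

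Finally I would identify that branch locus concretely: $f_\eta\circ\pi$ ramifies exactly over the $2g-2+r$ ramification points of $f_{\omega_D}$-type plus, crucially, at the $r$ points of $D$ lying over the branch divisor $R$ of $\pi$, where the two sheets of $D$ come together; the corresponding hyperplanes are the tangent hyperplanes to $C_\eta$ at the $r$ branch points of $\pi$, which by projective duality are $r$ points (hence $r$ hyperplanes in the dual picture). When $\omega_C\otimes\eta$ is very ample these $r$ points of $C_\eta$ are distinct and in general position, so the dual consists of $r$ distinct hyperplanes, as claimed. The main obstacle I expect is the bookkeeping in the second step: one must check carefully that the Gauss map of $\tilde\Theta$ really restricts to the Gauss map of $\Theta_{\hat P}=\tilde\Theta\cap\hat P$ as the naive projection $h^-$ — equivalently, that $\hat P$ meets $\tilde\Theta$ transversally along a general point and that the tangent hyperplane to $\tilde\Theta$ projects to the tangent hyperplane to the slice — together with pinning down exactly which points of $D$ contribute ramification, i.e.\ distinguishing the contribution of the branch points $R$ of $\pi$ (which produce the $r$ hyperplanes) from the "generic" Weierstrass-type ramification of the canonical map that is irrelevant for the statement.
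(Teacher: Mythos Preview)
Your first two paragraphs reproduce the paper's proof essentially verbatim: pass to the translate $\hat P\subset\pic^d\pa{D}$, decompose $H=H^0\pa{D,\omega_D}^*=H^+\oplus H^-$, observe that $h^-\circ f_{\omega_D}=f_\eta\circ\pi$, and deduce that the Gauss map $\mathcal G$ of $\Theta_{\hat P}$ is obtained from the classical Gauss map $\mathcal G'$ of $\tilde\Theta$ by projecting to $\mathbb P H^-$, so that $\mathcal G\pa{\sum p_j}$ is the hyperplane through the points $f_\eta\circ\pi\pa{p_j}$. The paper's argument in fact stops exactly here and leaves the identification of the branch locus as the Andreotti-type consequence.

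Your third paragraph, where you try to make that last step explicit, is muddled. When $\omega_C\otimes\eta$ is very ample, $f_\eta$ is an \emph{embedding}, so the map $f_\eta\circ\pi\colon D\to C_\eta$ has exactly the ramification of $\pi$: $r$ ramification points on $D$ over the $r$ branch points $f_\eta\pa{R}\subset C_\eta$. There are no extra ``$2g-2+r$ ramification points of $f_{\omega_D}$-type'' to worry about, and no ``Weierstrass-type'' contribution to discard; the obstacle you anticipate does not arise. Likewise, the dual of a branch point $q\in C_\eta\subset\mathbb P H^-$ is the hyperplane $\{H\ni q\}\subset\mathbb P\pa{H^-}^*$, not a ``tangent hyperplane to $C_\eta$ at $q$''. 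Finally, $\mathcal G$ is a map from a divisor in $\hat P$, not from a curve, so it does not literally ``factor as the Gauss map of the curve $C_\eta$ precomposed with $f_\eta\circ\pi$''; the correct statement is the one you already have at the end of your second paragraph, and the branch locus is read off directly from it.
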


\begin{rmk}
Observe that, if $r\geq 6$, then $\omega_C\otimes \eta$ is very ample. When this condition is not satisfied, the description of the branch locus $\mathfrak{B}$ is trickier. If, for example, $r=4$ and $g\geq 2$, there are two more possible situations:
\begin{enumerate}
  \item If $h^0\pa{\eta}=1$ and $\eta=p+q$ (resp. $\eta=2p$), the semi-canonical model $C_\eta$ is the nodal curve obtained from $C$ by identifying $p$ and $q$ (resp. a cuspidal curve). The branch locus $\mathfrak{B}$ is made of $4$ (resp. $5$) hyperplanes counted with multiplicities.
   \item If $h^0\pa{\eta}=2$, $C$ is hyperelliptic and the linear system $\abs{\eta}$ defines the $g^1_2$. Then $C_\eta$ is a rational normal curve and $\deg f_\eta =2$. The branch locus $\mathfrak{B}$ is made of $4g+8$ hyperplanes counted with multiplicities: there are $2g+2$ hyperplanes of multiplicity $2$, that are dual to the branch points of $f_\eta$, and a hyperplane with multiplicity $4$ (possibly not distinct from the previous ones) that is dual to the image of the branch points of $\pi$ in $C_\eta$. 
\end{enumerate}
Note that also in these cases, it is always possible to recover the branch locus of $\pi$ from $\mathfrak{B}$.
\end{rmk}

\subsection{Proof of Theorem \ref{theorem:isogjacob}}
\label{subsec:prymjac}

In this section we prove that a very general Prym variety of dimension at least $4$ is not isogenous to a Jacobian variety. For some values of $g$ and $r$ the statement is a simple consequence of the fact that the Prym locus has dimension larger than the moduli space of Jacobian varieties (see \eqref{item:PrymLocus1} of Proposition \ref{prop:Prymlocus}). In the other cases the moduli count shows that a general Prym variety is not isogenous to a Jacobian of a hyperelliptic curve (see \eqref{item:PrymLocus2} of Proposition \ref{prop:Prymlocus}). Using this fact and degeneration techniques, like in the proof of Theorem \ref{theorem: globaltorelli}, we can conclude the proof of Theorem \ref{theorem:isogjacob}.

\begin{prop}
\label{prop:Prymlocus}
Let $P$ be a general Prym variety in the Prym locus $\mathcal{P}^{\delta}_{g-1+\frac{r}{2}}$.
\begin{enumerate}
\item \label{item:PrymLocus1} If
\[
r=4 \text{ and } g\geq 3, \qquad \text{ or} \qquad r=2 \text{ and } g\geq 4,
\]
$P$ is not isogenous to a Jacobian variety.
\item \label{item:PrymLocus2} If
\[
r\geq 6 \text{ and } g\geq 1, \qquad \text{ or} \qquad r=4 \text{ and } g=2,
\]
$P$ is not isogenous to a Jacobian of a hyperelliptic curve.
\end{enumerate}
\end{prop}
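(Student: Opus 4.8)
The plan is to prove both parts by a dimension count: I will show that, in each of the listed ranges, the Prym locus $\mathcal{P}^{\delta}_{g-1+\frac{r}{2}}$ is strictly larger than the locus of abelian varieties isogenous to a Jacobian (for \eqref{item:PrymLocus1}), respectively to a hyperelliptic Jacobian (for \eqref{item:PrymLocus2}), so that the very general point of the Prym locus cannot lie in the latter. To this end set $n:=g-1+\frac{r}{2}$. Since the tangent space to $\mathcal{R}_{g,r}$ at $(C,\eta,R)$ is $H^1(C,T_C(-R))$, a Riemann--Roch computation gives $\dim\mathcal{R}_{g,r}=3g-3+r$. By Corollary \ref{cor:Prymfinite}, in all the cases of the Proposition except $(g,r)=(4,2)$ in \eqref{item:PrymLocus1} and $(g,r)=(2,4)$ in \eqref{item:PrymLocus2} the Prym map is generically finite onto its image, so $\dim\mathcal{P}^{\delta}_{n}=3g-3+r$; in the two remaining cases $\dim\mathcal{R}_{g,r}>\dim\mathcal{A}^{\delta}_{n}$ ($11>10$ and $7>6$ respectively), and I would use that there the Prym map is dominant onto $\mathcal{A}^{\delta}_{n}$, whence $\dim\mathcal{P}^{\delta}_{n}=\dim\mathcal{A}^{\delta}_{n}$, equal to $10$ and $6$ respectively.

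Next I would bound the dimension of the relevant isogeny locus. Since isogenous abelian varieties have the same dimension, a Jacobian isogenous to a Prym variety $P$ of dimension $n$ must be $J(C')$ for a smooth curve $C'$ of genus $n$. For a fixed $C'$ and a fixed isomorphism type of finite group scheme $K$, there are only finitely many isogenies out of $J(C')$ whose kernel has type $K$, because such a kernel is contained in $J(C')[m]$ for some $m$; letting $C'$ range over $\mathcal{M}_n$ and $K$ over the countably many possibilities, the set of $n$-dimensional abelian varieties isogenous to a Jacobian is a countable union of irreducible subvarieties, each of dimension at most $\dim\mathcal{M}_n=3n-3$. Restricting $C'$ to the hyperelliptic locus $\mathcal{H}_n\subset\mathcal{M}_n$, of dimension $2n-1$, gives the analogous bound for hyperelliptic Jacobians. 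Hence, whenever $\dim\mathcal{P}^{\delta}_{n}>3n-3$ (resp. $>2n-1$), the Prym locus is not contained in the Jacobian (resp. hyperelliptic-Jacobian) isogeny locus, and every point of $\mathcal{P}^{\delta}_{n}$ outside a countable union of proper subvarieties fails to be isogenous to a Jacobian (resp. to a hyperelliptic Jacobian).

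It then remains to check the numerical inequalities. For \eqref{item:PrymLocus1}: if $r=4$ and $g\geq 3$ then $\dim\mathcal{P}^{\delta}_{n}=3g+1>3g=3n-3$; if $r=2$ and $g\geq 5$ then $3g-1>3g-3$; and for $(g,r)=(4,2)$ one has $10>9$. For \eqref{item:PrymLocus2}: if $r\geq 6$ and $g\geq 1$ then $\dim\mathcal{P}^{\delta}_{n}-(2n-1)=(3g-3+r)-(2g+r-3)=g\geq 1$, and for $(g,r)=(2,4)$ one has $6>5$. I expect the main obstacle to be precisely the two cases $(g,r)=(4,2)$ and $(2,4)$, where the Prym map is not generically finite and the numerical gap is only $1$: there the whole argument rests on the dominance of the Prym map onto $\mathcal{A}^{\delta}_{n}$, which is not established in the previous sections and would need a separate input. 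A minor point, immaterial for the application to Theorem \ref{theorem:isogjacob}, is that being isogenous to a (hyperelliptic) Jacobian is a countable rather than finite union of closed conditions, so the conclusion is naturally of \emph{very general} rather than \emph{general} type.
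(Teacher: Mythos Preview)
Your approach is exactly the paper's: the same dimension count, with Corollary \ref{cor:Prymfinite} handling the cases where the Prym map is generically finite, and the two borderline cases $(g,r)=(4,2)$ and $(2,4)$ singled out as needing dominance of the Prym map. The paper supplies precisely the missing input you anticipate: it shows dominance by proving that the co-differential
\[
d\prym^{*}\colon \sym^2H^0\pa{C,\omega_C\otimes\eta}\ra H^0\pa{C,\omega_C^2\otimes\fascio{C}\pa{R}}
\]
is injective at a general point, i.e.\ that the semi-canonical model $C_\eta$ lies on no quadric. For $(g,r)=(2,4)$ this is immediate, since $C_\eta\subset\mathbb{P}^2$ is a plane quartic. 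For $(g,r)=(4,2)$ the paper proves a separate lemma (Lemma \ref{lemma:grado7quadriche}): for a non-hyperelliptic curve $C$ of genus $4$ and a general $\eta\in\pic^1\pa{C}$ with $h^0\pa{\eta^2}>0$, the degree-$7$ curve $C_\eta\subset\mathbb{P}^3$ is not contained in any quadric. The argument rules out smooth quadrics by adjunction (bidegree constraints force a $g^1_2$), and rules out quadric cones by analyzing the projection from the vertex, which would force $\eta\simeq L_1\otimes L_2^{-1}\otimes\fascio{C}\pa{p}$ with $\omega_C\simeq L_1\otimes L_2$, cutting the parameter space of $\eta$ down to at most one dimension instead of two.

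Your remark about ``very general'' versus ``general'' is well taken; the paper's proof does not comment on the countability of the isogeny data, though its notation section does distinguish the two notions and the application (Theorem \ref{theorem:isogjacob}) is stated for a very general Prym variety.
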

\begin{rmk}
Notice that, if $r=6$ and $g=1$ or $r=4$ and $g=2$, $P$ has dimension $3$ and, consequently, it is isogenous to a Jacobian.
\end{rmk}
\begin{proof}
We compare the dimension of the Prym locus $\mathcal{P}^{\delta}_{g-1+\frac{r}{2}}$ and the Jacobian locus (resp. hyperelliptic locus). When the Prym map is generically finite (see Corollary \ref{cor:Prymfinite}), the result follows by a count of parameters. Hence we have only to consider the cases $r=2$ and $g=4$, or $r=4$ and $g=2$. We claim that in these cases the differential of the Prym map is generically surjective, this implies that
\[
\dim \mathcal{P}^{\delta}_{g-1+\frac{r}{2}}=\dim \mathcal{A}_{g-1+\frac{r}{2}}^\delta.
\]
To see this we show that the co-differential is injective. We recall (see Section \ref{subsec:quadrics}) that the co-differential of the Prym map
\begin{equation*}
d\prym ^* \colon \sym^2H^0\pa{C, \omega_C\otimes \eta} \ra H^0\pa{C, \omega_C^2\otimes\fascio{C}\pa{R}}.
\end{equation*}
is injective if and only if the semi-canonical model $C_\eta$ of $C$ is not contained in any quadric. If $r=4$ and $g=2$ the statement follows from the fact that $C_\eta$ is a plane curve of degree $4$. In the other case, it is a consequence of the following lemma.
\end{proof}

\begin{lemma}
\label{lemma:grado7quadriche}
Let $C$ be a non-hyperelliptic curve of genus $4$ and $\eta\in \pic^1\pa{C}$ be a general line bundle of degree $1$ on $C$ such that $h^0\pa{\eta^2}>0$. Then the image $C_\eta$ of the semi-canonical map
\[
f_\eta \colon C \ra \mathbb{P}^{3}
\]
does not lie on any quadric.
\end{lemma}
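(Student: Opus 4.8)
The plan is to analyze the semi-canonical model $C_\eta \subset \mathbb{P}^3$ of degree $\deg(\omega_C\otimes\eta) = 2g-2+1 = 7$ and to show that it cannot lie on a quadric surface $Q$ for a general such $\eta$. First I would set up the dichotomy on the quadric: if $C_\eta$ lies on a quadric $Q$, then $Q$ is either smooth (isomorphic to $\mathbb{P}^1\times\mathbb{P}^1$) or singular (a cone over a conic). In either case a curve of degree $7$ on $Q$ has a bidegree (or a single degree type on the cone), and the key point is that lying on $Q$ forces $C$ to carry a pencil of low degree: on $\mathbb{P}^1\times\mathbb{P}^1$ the two rulings cut out on $C_\eta$ two pencils $g^1_a$ and $g^1_b$ with $a+b=7$, hence a $g^1_2$ or $g^1_3$; on the cone, projection from the vertex gives a $g^1_3$ (if the vertex is off $C_\eta$) or a $g^1_2$ (if it lies on $C_\eta$) — after a suitable adjustment at the vertex. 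Since $C$ is a general non-hyperelliptic curve of genus $4$, it has no $g^1_2$ and exactly one (or, depending on the theta characteristic, possibly two) $g^1_3$'s, those cut out by the two rulings of the canonical quadric.

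Next I would pin down which linear systems $|\omega_C\otimes\eta|$ could possibly produce such a configuration. If $C_\eta$ lies on a smooth quadric with bidegree $(3,4)$, then the pencil of degree $3$ must be one of the finitely many $g^1_3$'s on $C$, say $|N|$; then $\omega_C\otimes\eta - N$ is a pencil of degree $4$, so $h^0(\omega_C\otimes\eta\otimes N^{-1}) \ge 2$, i.e. $h^0(\eta\otimes N^{-1}\otimes\omega_C) \ge 2$. Twisting through, this translates (by Riemann--Roch on the degree-$1$ bundle $\eta$, since $\deg N = 3 = g-1$) into an effectivity/incidence condition on $\eta$ relative to the finitely many bundles $\omega_C\otimes N^{-1}$ (themselves of degree $3$), precisely of the shape controlled by part \eqref{item:ptiord2perapplicazione} of Proposition \ref{prop:ordinedue}: with $k=1$, $d=3 \le g-1$ and $k \le g-1-d = 0$... here one must be slightly careful because $k=1 > 0$, so I would instead argue directly. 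The cleanest formulation: the locus $\Lambda = \{\eta\in\pic^1(C): h^0(\eta^2)>0\}$ is $2$-dimensional (it is the image of $C_2$ under $L\mapsto$ a square root, up to the $2^{2g}$ points of order two), and I would show that the sub-locus of $\eta\in\Lambda$ for which $C_\eta$ lies on a quadric is a proper closed subset. Concretely, for each of the finitely many candidate pencils $N$ (the $g^1_3$'s, plus the cone cases), the condition $h^0(\omega_C\otimes\eta\otimes N^{-1})\ge 2$ cuts out a proper subvariety of $\Lambda$: if it were all of $\Lambda$, then for every effective square $\eta^2$ we would get $h^0(\omega_C\otimes\eta\otimes N^{-1})\ge 2$, and specializing $\eta^2$ (for instance letting $\eta$ approach a point of order two, or using Proposition \ref{prop:ordinedue}\eqref{item:punti di ordine 2} to avoid a fixed translate of $\Theta$) produces a contradiction with the genericity of $C$.

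I would organize the write-up as: (1) reduce to the existence of a low-degree pencil via the quadric dichotomy; (2) enumerate the finitely many possible pencils $N$ on the general curve $C$ of genus $4$ (no $g^1_2$; the $g^1_3$'s; and handle the cone vertex-on-$C_\eta$ case, which would force a $g^1_2$, hence is excluded immediately); (3) for each remaining $N$, show the bad locus in $\Lambda$ is proper, by exhibiting a single $\eta\in\Lambda$ with $h^0(\eta^2)>0$ and $h^0(\omega_C\otimes\eta\otimes N^{-1})=0$, which follows from Proposition \ref{prop:ordinedue}\eqref{item:ptiord2perapplicazione} applied with the line bundle $M = \omega_C\otimes N^{-1}$ of degree $3$ (the construction in that proof, which reduces via a twist by $\fascio{C}(p)^{2n}$ and $M^2$ to the case $M\simeq\fascio{C}$, $k=g-1$, handles exactly the off-by-$k$ issue noted above); (4) conclude that for general $\eta$ no quadric contains $C_\eta$, whence $d\prym^*$ is injective.

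The main obstacle I expect is the bookkeeping in step (2)--(3): making sure that \emph{all} ways a degree-$7$ curve can sit on a (possibly singular) quadric in $\mathbb{P}^3$ have been accounted for — in particular the singular-quadric case and the possibility that $C_\eta$ is \emph{singular} (it need not be embedded a priori, though under the genericity hypothesis and $r=2$, $\cliff(C)$-type bounds from Proposition \ref{prop:proplocaltorelli} should force $\omega_C\otimes\eta$ to be very ample here since $g=4\ge r=2$... actually $g \ge r$ is the relevant regime, so one should double-check) — and then confirming that each resulting incidence condition on $\eta$ is genuinely proper inside the $2$-dimensional family $\Lambda$ rather than automatically satisfied. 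Once the finiteness of candidate pencils and the applicability of Proposition \ref{prop:ordinedue} are in hand, the rest is a parameter count.
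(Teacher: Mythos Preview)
Your overall plan---split into smooth quadric versus cone, extract a low-degree pencil on $C$, and then use the finiteness of such pencils together with a dimension count against the $2$-dimensional locus $\Lambda=\{\eta:h^0(\eta^2)>0\}$---is the same as the paper's. Two places diverge, and one of them is a genuine gap.

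On the smooth quadric you only use $d_1+d_2=7$ and then propose to handle bidegree $(3,4)$ separately through Proposition~\ref{prop:ordinedue}. The paper instead also invokes the adjunction formula: a smooth curve of bidegree $(d_1,d_2)$ on $\mathbb{P}^1\times\mathbb{P}^1$ has genus $(d_1-1)(d_2-1)$, so together with $d_1+d_2=7$ one is forced to $(2,5)$ and hence to a $g^1_2$, contradicting non-hyperellipticity outright. The bidegree $(3,4)$ you worry about simply does not occur (it would give genus $6$), so the detour through Proposition~\ref{prop:ordinedue}---whose hypotheses, as you yourself note, do not quite fit here---is unnecessary.

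The cone case is where your plan actually breaks. You assert that projection from the vertex yields a $g^1_3$ when the vertex is off $C_\eta$ and a $g^1_2$ when it lies on $C_\eta$, and then dismiss the latter by non-hyperellipticity. The parity is the other way around. Projection from the vertex sends $C_\eta$ onto the base conic, so the composed map $C\ra\mathbb{P}^2$ factors through a conic and therefore has even degree; if the vertex were off $C_\eta$ this degree would be $7$, which is impossible. Hence the vertex lies on $C_\eta$, say at $f_\eta(p)$, and the projection is given by $\abs{\omega_C\otimes\eta\otimes\fascio{C}(-p)}$, of degree $6$; factoring through the conic yields a $g^1_3$, not a $g^1_2$. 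So the case you discard is precisely the one that survives, and it cannot be excluded by non-hyperellipticity alone. The paper disposes of it with exactly the kind of parameter count you sketch for the other case: the $g^1_3$ must be one of the (at most two) pencils $L_i$ with $\omega_C\simeq L_1\otimes L_2$, whence $\omega_C\otimes\eta\otimes\fascio{C}(-p)\simeq L_i^{2}$, i.e.\ $\eta\simeq L_i\otimes L_j^{-1}\otimes\fascio{C}(p)$. This is either effective (if $L_1\simeq L_2$) or moves in a one-parameter family as $p$ varies, in either case contradicting the two-dimensionality of $\Lambda$.
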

\begin{proof}
We recall that, since $C$ is not hyperelliptic, $\omega_C\simeq L_1\otimes L_2$, where $\deg L_i=3$, $h^0\pa{L_i}=2$ and the line bundles can possibly coincide (\cite[Chapter 5]{acgh}). As in Lemma \ref{lemma:tecnico}, one can prove that the semi-canonical map is an embedding. Assume by contradiction that $C_\eta$ lies on a quadric $Q$. If $Q$ is isomorphic to $\mathbb{P}^1\times\mathbb{P}^1$, by the adjunction formula, since $\deg C_\eta =7$, we have that the bidegree $\pa{d_1,d_2}$ of $C_\eta$ satisfies the following relations
\begin{align*}
\pa{d_1-1}\pa{d_2-1}&=4,\\
d_1+d_2=7.
\end{align*}
Thus, either $d_1=2$ or $d_2=2$, that is $C$ is hyperelliptic, in contradiction with the hypothesis. If $Q$ is a cone, let $g \colon C \ra \mathbb{P}^2$ be the composition of $f_\eta$ with the projection from the vertex of the cone. The image of $g$ is a conic and so $g$ factors as
\[
C \ra \mathbb{P}^1 \xrightarrow{\abs{\fascio{\mathbb{P}^1}\pa{1}}} \mathbb{P}^2,
\]
where the first morphism has degree $2\leq d \leq 3$ and the second one has degree $2$. Since $C$ is not hyperelliptic, $\deg g =6$. It follows that $g$ is the map associated to $\abs{\omega_C\otimes\eta\otimes \fascio{C}\pa{-p}}$, or, equivalently, that $p\in C_\eta$ is the vertex of the cone. By the previous discussion we can conclude that
\[
\omega_C\otimes\eta\otimes\fascio{C}\pa{-p}\simeq L_1^2
\]
and so, since $\omega_C\simeq L_1\otimes L_2$,
\[
\eta\simeq L_1\otimes L_2^{-1}\otimes\fascio{C}\pa{p}.
\]
If $L_1\simeq L_2$, this implies that $\eta$ is effective and we get a contradiction. Otherwise, $\eta$ varies at most in a one dimensional family. On the other hand, by hypothesis, $\eta$ depends on two parameters. This yields a contradiction.
\end{proof}

\begin{rmk}
We notice that the argument of Proposition \ref{prop:Prymlocus} shows that, also in the \'etale case, a general Prym variety of dimension greater or equal than 4 is not isogenous to a Jacobian variety.
\end{rmk}

To complete the proof of Theorem \ref{theorem:isogjacob} we need the following lemma concerning the difference surface (see \eqref{item:diff} in Section \ref{subsec: preliminaries}):

\begin{lemma}
\label{lemma:diffpern}
Let $C$ be a non-hyperelliptic curve and $n\in \mathbb{N}$ be a non zero integer. 
\begin{enumerate}
\item \label{item:diff1} $n\Gamma_C$ is birational to $C\times C$.
\item \label{item:diff2} $n\Gamma'_C$ is birational to the symmetric product $C_2$ of $C$.
\end{enumerate}
In particular, $\Gamma_C$ is birational to $C\times C$ and $\Gamma'_C$ is birational to $C_2$.
\end{lemma}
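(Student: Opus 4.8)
The plan is to prove parts \eqref{item:diff1} and \eqref{item:diff2} together, the second following from the first since $n\Gamma'_C$ is by definition the image of $n\Gamma_C$ under the $2:1$ quotient $\sigma_C\colon \pic^0(J(C))\to \mathcal{K}^0(J(C))$, and the difference map $C\times C\to \Gamma_C$ is visibly equivariant for the involution $(p,q)\mapsto (q,p)$ on the source and $x\mapsto -x$ on the target. So the heart of the matter is \eqref{item:diff1}: showing that the multiplication-by-$n$ map restricted to $\Gamma_C$ is birational onto its image, equivalently that for a general point $[p-q]\in \Gamma_C$ the only point $[p'-q']\in\Gamma_C$ with $n[p'-q']=n[p-q]$ is $[p-q]$ itself. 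Since $\Gamma_C$ is already birational to $C\times C$ via the difference map (this is standard for $C$ non-hyperelliptic: if $[p-q]=[p'-q']$ with the four points not all equal, then $p+q'\equiv p'+q$ gives a $g^1_2$), it suffices to show $n|_{\Gamma_C}$ is generically injective.

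First I would set up the problem as follows. Two points $[p-q]$, $[p'-q']$ satisfy $n[p-q]=n[p'-q']$ exactly when $n(p-q)-n(p'-q')$ is an $n$-torsion point of $J(C)$, i.e. $\mathcal{O}_C(np+nq')\otimes\mathcal{O}_C(-np'-nq)$ lies in $J(C)[n]$. Fix a general $[p-q]$ and suppose there is a curve's worth (positive-dimensional family) of solutions $[p'-q']\neq [p-q]$; since $J(C)[n]$ is finite, by a pigeonhole/irreducibility argument we may fix one torsion point $\tau\in J(C)[n]$ and get a positive-dimensional family of pairs $(p',q')$ with $np'+nq\equiv np+nq'+\tau$ — where now $p,q$ are also allowed to move, so really we would work on $C\times C$ and analyze the correspondence $\Sigma_\tau=\{(p,q,p',q') : np'+nq\equiv np+nq'+\tau\}$ inside $C^4$, showing that its general fibre over the first two coordinates is just the diagonal $\{(p,q)\}$ (plus possibly the transposition-image, which maps to the same point of $\Gamma'_C$).

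The key step, and the one I expect to be the main obstacle, is ruling out the positive-dimensional fibres. The clean way is a monodromy / specialization argument: as $(p,q)$ varies over $C\times C$, the equation $np'+nq\equiv np+nq'+\tau$ cuts out a curve in the $(p',q')$-plane, and I want to show that for general $(p,q)$ this curve is reducible with $\{p'=p,\ q'=q\}$ (and its transpose-related companion) as a component, and that no other component can map dominantly down — i.e. no other component is "horizontal". One argues by degeneration: let $q\to p$. The diagonal limit forces the relation to degenerate and, using that $C$ is non-hyperelliptic so that the Abel–Jacobi embedding $C\hookrightarrow J(C)$ has no nontrivial translation symmetries and $\Gamma_C$ has exactly the expected self-intersection structure, one shows the extra locus either becomes empty or contracts. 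Alternatively — and this may be cleaner — one observes that a horizontal component would produce, for general $p,q$, a linear equivalence $n(p'-p)\equiv n(q'-q)$ with $p',q'$ depending algebraically on $p,q$; specializing $p=q$ gives $n(p'-p)\equiv n(q'-p)$ hence $np'\equiv nq'$, so $p'=q'$ (as $C$ is not hyperelliptic, $h^0(\mathcal{O}_C(np')-nq'))$ forces $p'=q'$ unless there is a map to $\mathbb{P}^1$ of small degree, excluded for general non-hyperelliptic $C$ or handled case by case), and then the original relation becomes $\tau$-torsion data that pins down finitely many possibilities, contradicting horizontality. Once generic injectivity of $n|_{\Gamma_C}$ is established, part \eqref{item:diff1} is immediate, \eqref{item:diff2} follows by passing to the symmetric quotient, and the final sentence ($\Gamma_C\sim C\times C$, $\Gamma'_C\sim C_2$) is just the case $n=1$, which is the classical statement.
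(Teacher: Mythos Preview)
Your reduction is sound: since multiplication by $n$ is an isogeny, $n|_{\Gamma_C}$ is finite onto $n\Gamma_C$, and what you need is generic injectivity, i.e.\ that $\Gamma_C$ is not preserved by translation by any nonzero $\tau\in J(C)[n]$. The paper does exactly this reduction and then invokes \cite[Lemma 3.1.1, Proposition 3.2.1]{artPirola} to dispose of it, treating $n=1$ as the classical case you also quote. So your strategy matches the paper's; the issue is in how you try to carry out the key step.

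Your specialization argument has a genuine gap. First, there is a bookkeeping slip: $n[p-q]=n[p'-q']$ is equivalent to $[p-q]-[p'-q']\in J(C)[n]$, not to $n\pa{[p-q]-[p'-q']}$ being $n$-torsion, so your $\Sigma_\tau$ should be $\{(p,q,p',q'):[p-q]-[p'-q']=\tau\}$ without the factors of $n$. More seriously, when you set $p=q$ you obtain $[p'-q']=\tau$, hence $np'\equiv nq'$; but this does \emph{not} force $p'=q'$. It only says $[p'-q']$ is the fixed $n$-torsion point $\tau$, which is perfectly compatible with $p'\neq q'$ whenever $\tau\in\Gamma_C\setminus\{0\}$---and indeed if $\Gamma_C+\tau=\Gamma_C$ then automatically $\tau\in\Gamma_C$, so you have not derived any contradiction. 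The appeal to ``no map to $\mathbb{P}^1$ of small degree, excluded for general non-hyperelliptic $C$'' is both unjustified and insufficient, since the lemma is asserted for every non-hyperelliptic $C$, not just a general one. The earlier degeneration sketch (``the extra locus either becomes empty or contracts'') is too vague to evaluate.

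A clean way to close the gap, which is in the spirit of \cite{artPirola}, is to observe that for non-hyperelliptic $C$ the difference map is an injective immersion on $(C\times C)\setminus\Delta$ and contracts the diagonal $\Delta$ to $0$; hence $0$ is the \emph{unique} singular point of $\Gamma_C$. Any translation by $\tau$ preserving $\Gamma_C$ must permute its singular locus, so it fixes $0$ and therefore $\tau=0$. This immediately gives generic injectivity of $n|_{\Gamma_C}$, hence \eqref{item:diff1}; and \eqref{item:diff2} follows by your (correct) equivariance remark. Once you replace the specialization paragraph with this singular-locus argument, your proof is complete and essentially the same as the paper's.
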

\begin{proof}
Arguing as in \cite[Lemma 3.1.1, Proposition 3.2.1]{artPirola}, we can assume $n=1$. To prove \eqref{item:diff1}, notice that, if for the general point $\pa{a,b}\in C\times C$ there exists $\pa{c,d}\in C\times C$ such that $[a-b]=[c-d]$, then $C$ is hyperelliptic. Statement  \eqref{item:diff2} follows from \eqref{item:diff1}.
\end{proof}

\begin{proof}[Proof of Theorem \ref{theorem:isogjacob}]
By Proposition \ref{prop:Prymlocus}, we have to consider only the cases $r\geq 8$ and $g\geq 1$ or $r=6$ and $g\geq 2$. We assume by contradiction that a very general Prym variety is isogenous to a Jacobian. Then there exists a map of families
\begin{equation}
\label{eq:isogsututto}
\mathcal{J}/\mathcal{U} \xrightarrow{G} \mathcal{P}/\mathcal{U},
\end{equation}
where $\mathcal{U}$ is a finite \'etale covering of a dense open subset of $\mathcal{R}_{g,r}$ and, for each $t\in \mathcal{U}$,
\begin{enumerate}
\item $\mathcal{P}_t$ is a Prym variety of dimension $g-1+\frac{r}{2}$;
\item $\mathcal{J}_t$ is the Jacobian of a curve of genus $g-1+\frac{r}{2}$;
\item $G_t \colon \mathcal{J}_t \ra \mathcal{P}_t$ is a surjective morphism of abelian varieties.
\end{enumerate}

\emph{Step I: limits of $G$ (\eqref{eq:isogsututto}) at the boundary.}

\noindent We want to extend the map $G$ to some point of the boundary of $\Upsilon$ (see \eqref{eq:superf}) associated to an admissible covering as in Figure \ref{disegno} of page \pageref{disegno}.

Let $\pi \colon E \ra C$ be a very general double covering of a curve of genus $g$ with $r-2$ branch points, and assume that $\pi$ is determined by the triple $\pa{C, \eta', R'}$. For each non-branch point $x\in C$, we consider a family of admissible coverings
\begin{equation*}
\label{diag: curve disco}
\mathcal{D}^x/\Delta  \rightarrow  \mathcal{C}^x/\Delta ,
\end{equation*}
obtained as in Section \ref{subsec:Prymmapboundary} (see \eqref{eq:famD_kcompl}) by choosing a unit disk centred in the point $\pa{\eta'\otimes\fascio{C}\pa{x}, R'+2x}$.
Let us restrict our initial map of families \eqref{eq:isogsututto} to $\Delta\setminus\set {0}$. Changing base, if necessary, by completion, we obtain a map of families
\[
\mathcal{J}^x/\Delta \ra \mathcal{P}^x/\Delta.
\]
The semi-abelian variety $\mathcal{P}_0^x$ is the kernel of the morphism $J\pa{\mathcal{D}_0^x}\ra J\pa{\mathcal{C}_0^x}$, where $J\pa{\mathcal{C}_0^x}= J\pa{C}$, $J\pa{\mathcal{D}_0^x} = J\pa{E_x}$, and $E_x$ is the singular curve obtained from $E$ by identifying $p_x$ and $q_x$, where $\pi^{-1}\pa{x}=\set{p_x,q_x}$. We denote by $P$ the compact part of $\mathcal{P}_0^x$. The semi-abelian variety $\mathcal{J}_0^x$ is a generalized Jacobian variety of a singular curve $H_x$, obtained from $H$ by identifying two distinct points $a_x$ and $b_x$. We denote by $\varphi\colon J\pa{H} \rightarrow P$ the isogeny induced on the compact quotients.

\emph{Step II: comparing the extension classes of $E$ and $H$.}

\noindent  By varying $x\in C$, we can perform different degenerations of the families in \eqref{eq:isogsututto}. Notice that the compact quotient $P$ of $\mathcal{P}_0^x$ does not depend on $x$. It follows that also the normalization $H$ of  $H_x$ and the isogeny $\varphi\colon J\pa{H} \ra P$ are independent of the chosen degeneration. Thus for each non-branch point $x\in C$, there exists $a_x,b_x\in H$ such that
\begin{equation}
\label{eq:compat}
\varphi^*\pa{[p_x-q_x]}=n_x[a_x-b_x],
\end{equation}
for some $n_x$ different from zero.

\emph{Step III: conclusion.}
\noindent Let us consider the diagram
\[
\xymatrix{
 P\ar[d]^{\sigma}\ar[r]^{\psi \qquad} & \pic^0\pa{P}\ar[d]^{\sigma^\circ}\ar[r]^{\varphi^*\quad} & \pic^0\pa{J\pa{H}}\ar[d]^{\sigma_H^{\circ}} \\
\mathcal{K}\pa{P}\ar[r]^{\psi_{\mathcal{K}}}& \mathcal{K}^0\pa{P}\ar[r]^{\varphi^*_\mathcal{K}} & \mathcal{K}^0\pa{J\pa{H}} \\
}
\]
where $\psi$ is the isogeny induced by the polarization, the vertical arrows are the natural projections, and $\psi_{\mathcal{K}}$ and $\varphi^*_\mathcal{K}$ are the maps induced respectively by $\psi$ and $\varphi^*$ on the Kummer varieties. Set
\begin{align*}
E''&= \psi\pa{E'}\subset \pic^0\pa{P} ,\\
C''&= \psi_{\mathcal{K}}\pa{C'}=\sigma^\circ\pa{E''}\subset  \mathcal{K}^0\pa{P},
\end{align*}
 where $E'$ is the Abel-Prym curve of $\pi$ and $C'$ is its projection in the Kummer variety. Denote by $\Gamma_H$ the image of the difference map in $\pic^0\pa{J\pa{H}}$ (see \eqref{item:diff} in Section \ref{subsec: preliminaries}) and by $\Gamma'_H$ its projection in the Kummer variety. By \eqref{eq:compat}, arguing as in \cite[Section 2]{artPirola}, we find $n\neq 0$ such that $\varphi^*\pa{E''}\subseteq n \Gamma_H$. It follows that \[
  \varphi^*_{\mathcal{K}}\pa{C''}\subseteq n \Gamma'_H.                                                                                                                                                                                                                                                                                                                                                                                                                                                                                                                                                                                                                                                                                                                                                                                                                                                                                                         \]

Since $H$ is not hyperelliptic (Proposition \ref{prop:Prymlocus}), by Proposition \ref{lemma:diffpern}, $n \Gamma'_H$ is birational to the symmetric product $H_{2}$ of $H$. By composition, we obtain a non-constant rational map
\[
C'\xrightarrow{\varphi_{\mathcal{K}}} C'' \xrightarrow{\varphi^*_{\mathcal{K}}} \varphi^*_{\mathcal{K}}\pa{C''} \hookrightarrow n \Gamma'_H  \dashrightarrow H_2 \hookrightarrow J\pa{H}.
\]
We notice that, since by hypothesis $r\geq 6$, it holds $g<g-2+\frac{r}{2}=\dim P= \dim J\pa{H}$. Moreover, the geometric genus of $C'$ is at most $g$. Thus we can conclude that $J\pa{H}$ is not simple. On the other hand, $P$ is very general and so, by \cite{HodgePrym}, the N\'eron-Severi group $\ns\pa{P}$ is isomorphic to $\mathbb{Z}$. It follows that $P$, and consequently $J\pa{H}$, is simple. Thus we get a contradiction.
\end{proof}

\begin{acknowledgements}
We would like to thank Juan Carlos Naranjo for many helpful discussions during his visit to Pavia in June 2010, and Enrico Schlesinger for his helpful suggestion on complete intersections. We also wish to thank the referee whose comments helped improve the exposition.
\end{acknowledgements}

\end{document}